\documentclass[reqno]{amsart}
\usepackage{yhmath,amsmath,amsfonts,amssymb,enumerate,amsthm,appendix,caption,lipsum,xparse}
\usepackage{enumitem}
\usepackage{mathrsfs}
\usepackage[english]{babel}
\usepackage{cmap,mathtools}
\usepackage{hyperref}
\usepackage[utf8]{inputenc}
\usepackage{graphics} %% add this and next lines if pictures should be in esp format
\usepackage{epsfig} %For pictures: screened artwork should be set up with an 85 or 100 line screen
\usepackage{graphicx}\usepackage{epstopdf}%This is to transfer .eps figure to .pdf figure; please compile your paper using PDFLeTex or PDFTeXify.
\usepackage[pagewise]{lineno}%\linenumbers

\usepackage{a4wide}

\setlength{\parskip}{0.5em}
\usepackage[margin=0.95in]{geometry}
\usepackage{cite}
\usepackage[utf8]{inputenc}
\usepackage{xcolor}
\usepackage{hyperref}
\usepackage[hyperpageref]{backref}
\hypersetup{
    colorlinks=true,
    linkcolor=myblue,
    filecolor=magenta,      
    urlcolor=mygreen,
    citecolor=mygreen,
}
\definecolor{mygreen}{rgb}{0.01,0.6,0.2}
\definecolor{myblue}{rgb}{0.01, 0.18, 1.0}
\parskip.20cm
\newtheorem{theorem}{Theorem}

\newtheorem{lemma}[theorem]{Lemma}
\newtheorem{corollary}[theorem]{Corollary}
\theoremstyle{definition}
\newtheorem{definition}[theorem]{Definition}
\newtheorem{remark}[theorem]{Remark}

\numberwithin{equation}{section}
\numberwithin{theorem}{section}
\numberwithin{equation}{section}
\numberwithin{theorem}{section}

\title[Mixed Schr\"{o}dinger system in a plane]{Existence of normalized ground state solution to a mixed Schr\"odinger system in a plane}
\author[A. Dixit, A. Esfahani, H. Hajaiej \& T. Mukherjee]{ASHUTOSH DIXIT$^{1}$, AMIN ESFAHANI$^2$, HICHEM HAJAIEJ$^{3}$ and  TUHINA MUKHERJEE$^1$}

\begin{document}
\def\dxy{\mathrm{d}x \mathrm{d}y}
\def\d{\mathrm{d}}
\def\dxz{\mathrm{d}x \mathrm{d}z}
\def\dyz{\mathrm{d}y \mathrm{d}z}
\def\R{\mathbb{R}}
\def\({\bigg(}
\def\){\bigg)}
\def\H{\mathcal{H}^{1,s}(\R^2)}
\def\Hi{\mathcal{H}^{1,s_i}(\R^2)}
\def\Hone{\mathcal{H}^{1,s}(\R^2)}
\def\Htwo{\mathcal{H}^{1,s}(\R^2)}
\def\ui{\int_{\R^2} \big(|u|^2 + |\partial_x u|^2 + |(-\Delta)^{s_{i}/2}_y u|^2 \big) \dxy}
\def\uone{|u|^2 + |\partial_x u|^2 + |(-\Delta)^{s/2}_{y} u|^2 }
\def\unone{|u_n|^2 + |\partial_x u_n|^2 + |(-\Delta)^{s/2}_{y} u_n|^2 }
\def\v2{\int_{\R^2} \big(|v|^2 + |\partial_x v|^2 + |(-\Delta)^{s/2}_yv|^2 \big) \dxy}
\def\uq{\(\int_{\R^2}|u|^q \dxy\)^{2/q}}
\def\u2si{\(\int_{\R^2}|u|^{2_{s_i}} \dxy\)^{2/2_{s_i}}}
\def\Dely{(-\Delta)_{y}^{s/2}}
\def\Delone{(-\Delta)_{y}^{s/2}}
\def\Deltwo{(-\Delta)_{y}^{s/2}}
\def\C_c{C_{c}^{\infty}(\R^2)}
\definecolor{dogwoodrose}{rgb}{0.84, 0.09, 0.41}
\definecolor{lava}{rgb}{0.81, 0.06, 0.13}
\def\tb{\color{blue}}
\definecolor{ao(english)}{rgb}{0.0, 0.5, 0.0}
\def\ao{\color{ao(english)}}
\definecolor{robineggblue}{rgb}{0.0, 0.8, 0.8}
\def\tro{\color{tiffanyblue}}
\definecolor{tiffanyblue}{rgb}{0.04, 0.73, 0.71}
\definecolor{tenné(tawny)}{rgb}{0.8, 0.34, 0.0}
\def\ten{\color{tenné(tawny)}}
\definecolor{aqua}{rgb}{0.0, 1.0, 1.0}
\def\aqua{\color{aqua}}
\maketitle 
\centerline{$^{1}$Department of Mathematics, Indian Institute of Technology Jodhpur,}
 \centerline{Rajasthan, 342030, India}
 \centerline{$^{2}$Department of Mathematics, Nazarbayev University, }
 \centerline{ Astana, 010000, Kazakhstan}
 \centerline{$^{3}$Department of Mathematics, California State University at Los Angeles,}
\centerline{Los Angeles, CA 90032,
USA}

 \begin{abstract}
In this paper, we establish the existence of positive ground state solutions for a class of mixed Schr\"{o}dinger systems with concave-convex nonlinearities in $\mathbb{R}^2$, subject to $L^2$-norm constraints; that is,
\[
\left\{
\begin{aligned}
	-\partial_{xx} u + (-\Delta)_y^s u + \lambda_1 u &= \mu_1 u^{p-1} + \beta r_1 u^{r_1-1} v^{r_2}, &&   \\
	-\partial_{xx} v + (-\Delta)_y^s v + \lambda_2 v &= \mu_2 v^{q-1} + \beta r_2 u^{r_1} v^{r_2-1}, &&  
\end{aligned}
\right.
\]
subject to the $L^2$-norm constraints:
\[
\int_{\mathbb{R}^2} u^2 \,\mathrm{d}x\mathrm{d}y = a \quad \text{and} \quad \int_{\mathbb{R}^2} v^2 \,\mathrm{d}x\mathrm{d}y = b,
\]
where $(x,y)\in \mathbb{R}^2$, $u, v  \geq 0$, $s \in \left(1/2, 1 \right)$, $\mu_1, \mu_2, \beta > 0$, $r_1, r_2 > 1$, the prescribed masses $a, b > 0$, and the parameters $\lambda_1, \lambda_2$ appear as Lagrange multipliers. Moreover, the exponents $p, q, r_1 + r_2$ satisfy:
\[
\frac{2(1+3s)}{1+s} < p, q, r_1 + r_2 < 2_s,
\]
where $2_s = \frac{2(1+s)}{1-s}$.  
To obtain our main existence results, we employ variational techniques such as the Mountain Pass Theorem, the Pohozaev manifold, Steiner rearrangement, and others, consolidating the works of Louis Jeanjean et al. \cite{jeanjean2024normalized}.

 \end{abstract} 
 \maketitle

\section{Introduction}

 In recent years, the study of nonlinear fractional Schrödinger equations has experienced significant growth due to their applications in nonlinear optics, ecology, biology, and fractional quantum mechanics. A particular focus has been on the mixed Schrödinger operator $\mathcal{L} = -\partial_{xx} + (-\Delta)_{y}^{s} + I$, which has been extensively explored in various contexts, as highlighted by   Esfahani et al.~\cite{MR3857062}.
They investigated the problem:
\begin{equation}\label{1.1}
	w + (-\Delta)_{x}^{s}w + (-\Delta)_{y}w = f(w), \quad (x,y) \in \mathbb{R}^{n}\times\mathbb{R}^{m},
\end{equation}
where $s \in (0,1)$ and $m, n \in \mathbb{N}$. The operator $(- \Delta)_{x}^{s}$ represents the fractional Laplacian in the $x$-variable, defined (up to a normalizing constant $C_{n,s}$) as
\[
(-\Delta)_{x}^{s}w(x,y) = C_{n,s}~ \text{P.V.} \int_{\mathbb{R}^n} \frac{w(x,y) - w(z,y)}{|x-z|^{n+2s}} \, \mathrm{d}z.
\]
Here, the nonlinearity $f \in C^1(\mathbb{R}, \mathbb{R})$ satisfies the subcritical growth assumption, without the Ambrosetti-Rabinowitz condition. The operator $\mathcal{L}$ is particularly important for modeling anisotropic diffusion in stochastic processes, such as Brownian motion and Lévy–Itô processes, where directional sensitivity is crucial. These systems exhibit diffusion behavior that varies with direction, accurately capturing scenarios where movement is non-uniform in all directions. Esfahani et al.~established that equation \eqref{1.1} possesses a positive ground state solution, which is axially symmetric. This operator has also been studied in simpler models~\cite{MR2911421,MR2963799}.
Felmer and Wang~\cite{MR3918177} further explored similar problems, focusing on the qualitative properties of positive solutions under various assumptions on $f$. More recently, Gou et al.~\cite{gou2023solitary} provided a comprehensive study on the existence of solutions to a specific problem on a plane:
\begin{equation}\label{1.2}
	-\partial_{xx} w + (-\Delta)_{y}^{s} w + w = w^{p-1} \quad \text{in} ~ \mathbb{R}^2.
\end{equation}
They demonstrated that equation \eqref{1.2} admits a positive ground state solution that is axially symmetric when $2 < p < 2_s$. Furthermore, by applying the Pohozaev identity, they showed that the problem admits only a trivial solution for $p \geq 2_s$.\\
In 2024, Esfahani   et al.~\cite{esfahani2024new} discussed the anisotropic nonlinear Schrödinger equation on the plane, investigating the equation:
\begin{equation}\label{1.3}
	-\partial_{xx} w + (-\Delta)_{y}^{s} w + \lambda w = |w|^{p-2}w \quad \text{in} ~ \mathbb{R}^2.
\end{equation}
They established the existence of normalized solutions to equation \eqref{1.3} in subcritical, critical, and supercritical cases, utilizing the Pohozaev identity and the maximum principle. To address the challenge of finding normalized solutions for nonlinear Schrödinger equations, particularly under the $L^2$-norm constraint
\[
\int_{\mathbb{R}^2} |w|^2 \;\mathrm{d}x \mathrm{d}y = c > 0,
\]
we begin by exploring solutions to equation \eqref{eq:20220902-maine1}. The study of such solutions has gained significant attention in the mathematical community, with pioneering work by Lions \cite{MR834360} marking a key development through the concentration-compactness method. This approach provided a foundational framework for proving the existence of normalized solutions in subcritical cases, which has since been expanded by many researchers. For example, \cite{MR2100909} extended Lions' method, covering a wider range of nonlinearities. However, the problem becomes notably more complex in the critical and supercritical cases, where Bellazzini and Jeanjean \cite{MR3510005} introduced an innovative constraint to advance the field. Further advancements have been made by Hajaiej and Song \cite{hajaiej2023number,hajaiej2022general,hajaiej2023strict}, who developed comprehensive techniques to address the existence, non-existence, multiplicity, and uniqueness of normalized solutions.
Despite these advancements, the anisotropic case presents unique challenges that have not been thoroughly investigated. Specifically, crucial aspects such as the maximum principle and Pohozaev identity for solutions to equation \eqref{eq:20220902-maine1} remain largely unexplored. These aspects are critical for understanding the behavior of the anisotropic operator, requiring novel approaches distinct from those used in the fractional setting, which, while technically demanding, did not necessitate fundamentally new ideas. Our research aims to address these gaps by investigating these fundamental properties in the anisotropic context. This work is essential to advancing the understanding of normalized solutions within this more complex framework. For related studies on existence, non-existence, and the orbital stability of standing waves under optimal nonlinearities, the reader may refer to \cite{MR3070759,MR3311080,MR2883850,hajaiej2023existence}. For foundational studies on Schrödinger systems with local and nonlocal nonlinearities, as well as fractional Schrödinger systems with general nonlinearities, the reader may refer to \cite{hajaiej2023normalized,hajaiej2010}

In this paper, our aim is to establish the existence of normalized solutions for the following anisotropic nonlinear Schrödinger system on the plane:
\begin{equation}\label{eq:20220902-maine1}
	\begin{cases}
		-\partial_{xx}u + (-\Delta)_{y}^{s} u+\lambda_1u = \mu_1 |u|^{p-2}u+\beta r_1|u|^{r_1-2}u|v|^{r_2}, \quad &  (x,y)\in\mathbb{R}^2,\\
		-\partial_{xx}v + (-\Delta)_{y}^{s} v+\lambda_2v = \mu_2 |v|^{q-2}v+\beta r_2|u|^{r_1}|v|^{r_2-2}v,  \\
		\|u\|_{L^2(\mathbb{R}^2)}^2 = a, \, \|v\|_{L^2(\mathbb{R}^2)}^2 = b,
	\end{cases}
\end{equation}
where $s \in \left( \frac{1}{2},1 \right)$, $\mu_1, \mu_2 > 0$, $\beta > 0$, $r_1, r_2 > 1$, and the prescribed masses $a, b > 0$. Moreover, we assume:
\[
\frac{2(1+3s)}{1+s} < p, q, r_1 + r_2 < 2_s,
\]
where $2_s = \frac{2(1+s)}{1-s}$. 

We first define the fractional Sobolev-Liouville space $\mathcal{H}^{1,s}(\mathbb{R}^2)$. Let   $s \in (0,1)$. The fractional Sobolev-Liouville space $\mathcal{H}^{1,s}$ is the set of all functions $u \in L^2(\mathbb{R}^2)$ such that:
\[
\|u\|_{\mathcal{H}^{1,s}(\mathbb{R}^2)} = \|u\|_{L^2(\mathbb{R}^2)} + \|\partial_x u\|_{L^2(\mathbb{R}^2)} + \|(-\Delta)_{y}^{s/2}u\|_{L^2(\mathbb{R}^2)} < \infty,
\]
where $(-\Delta)_{y}^{s/2}$ is a fractional Laplacian operator with respect to the $y$ variable, defined as
\[
(-\Delta)_{y}^{s/2}u(x,y) = C_s~ \text{P.V.} \int_{\mathbb{R}^2} \frac{u(x,y) - u(x,z)}{|y-z|^{2+s}} \, \mathrm{d}z,
\]
with $C_s$ being a normalizing constant and P.V. standing for the principal value.

The space $\mathcal{H}^{1,s}(\mathbb{R}^2)$ is a Banach space with respect to the norm:
\[
\|u\|_{\mathcal{H}^{1,s}}^2 := \|u\|_{L^2}^2 + \|\partial_x u\|_{L^2}^2 + \|(-\Delta)_{y}^{s/2} u\|_{L^2}^2.
\]
The product space $\mathbb{D} = \mathcal{H}^{1,s}(\mathbb{R}^2) \times \mathcal{H}^{1,s}(\mathbb{R}^2)$ is equipped with the norm:
\[
\|(u,v)\|_{\mathbb{D}}^2 := \|u\|_{\mathcal{H}^{1,s}}^2 + \|v\|_{\mathcal{H}^{1,s}}^2.
\]

 In this work, we address several significant challenges associated with the study of normalized solutions for systems involving anisotropic operators. One of the major novelties is the use of the fractional Laplacian with respect to the $y$-variable, which introduces anisotropy into the system. This anisotropy complicates the analysis due to the directional dependence of the diffusion operator, requiring techniques that extend beyond the traditional methods used for isotropic or purely fractional operators.
A key difficulty we overcame is the analysis of the Sobolev-Liouville space $\mathcal{H}^{1,s}(\mathbb{R}^2)$, which combines both local and non-local behaviors. This makes the estimation of norms, particularly in proving coercivity, highly non-trivial. Furthermore, the presence of mixed nonlinearities with concave-convex terms, along with $L^2$ constraints, introduces additional challenges in constructing suitable functionals and in the application of variational methods, such as the mountain pass theorem.

\subsection*{Paper Organization}  The paper is organized as follows. In Section \ref{Preliminaries}, we present the hypotheses and main results, including the introduction of the energy functional under normalizing constraints, the Pohozaev manifold, and the main theorems. We also discuss preliminary concepts, such as ground state solutions and the scalar problem. In Section \ref{sec-3}, we focus on the Pohozaev manifold, coercivity, and the use of Steiner rearrangement techniques. Section \ref{sec:Palais_Smale} is devoted to proving the existence of a special Palais-Smale sequence and obtaining estimates for the constant $C_{(a,b)}$. Finally, Section \ref{sec:PS-sequence-sec-5} presents the proof of the main theorem \ref{th:main-t1}.

\section{Preliminaries and Main Results}\label{Preliminaries}
To state our primary findings, let us first review some definitions and existing facts.
The energy functional associated with the system \eqref{eq:20220902-maine1} is defined by 
\begin{align} \label{energy functional}
	\begin{split}
		J(u,v) &= \frac{1}{2}\int_{\mathbb{R}^{2}} \big(|\partial_{x}u|^2 +  |(-\Delta)_{y}^{s /2} u|^2 \big)\,\mathrm{d}x \mathrm{d}y +\frac{1}{2}\int_{\mathbb{R}^{2}} \big(|\partial_{x}v|^2 +  |(-\Delta)_{y}^{s /2} v|^2  \big)\,\mathrm{d}x \mathrm{d}y  \\
		&\quad - \frac{\mu _1}{p}\int_{\mathbb{R}^{2}} |u|^{p}\, \mathrm{d}x \mathrm{d}y -\frac{\mu _2}{q}\int_{\mathbb{R}^{2}} |v|^{q}\, \mathrm{d}x \mathrm{d}y  - \beta \int_{\mathbb{R}^{2}}|u|^{r_1}|v|^{r_2}\, \mathrm{d}x \mathrm{d}y.
	\end{split}
\end{align}
We consider the following sets corresponding to our normalization constraint 
$$ S_a=\left\{u \in \mathcal{H}^{1,s}(\mathbb{R}^2) :\int_{\mathbb{R}^2} u^2 \,\mathrm{d}x \mathrm{d}y =a\right\}~~\text{and}~~S_b=\left\{u \in \mathcal{H}^{1,s}(\mathbb{R}^2) :  \int_{\mathbb{R}^2} v^2 \,\mathrm{d}x \mathrm{d}y =b\right\}.$$

The above functional $J$ is well-defined and $C^1$ on the product space $\mathbb{D}$. For  $(u_0,v_0) \in \mathbb{D}$, the Fr\'{e}chet derivative of $J$ at $(u,v) \in \mathbb{D}$ can be represented as
\begin{align*}
	\langle J'(u,v) | (u_0,v_0)\rangle &= \int_{\mathbb{R}^{2}}  \big(  \partial_x u  \partial_x u_0 +(-\Delta)_{y}^{s/2}u  (-\Delta)_{y}^{s/2}u_0   \big)\,\mathrm{d}x\mathrm{d}y \\
	&\qquad + \int_{\mathbb{R}^{2}}  \big(\partial_x v  \partial_x v_0 +(-\Delta)_{y}^{s/2}v  (-\Delta)_{y}^{s/2}v_0   \big)\,\mathrm{d}x\mathrm{d}y  \\ 
	& \quad- \mu _1\int_{\mathbb{R}^{2}} |u|^{p-2} u  u_0 \,\mathrm{d}x\mathrm{d}y
	- \mu _2\int_{\mathbb{R}^{2}} |v|^{q-2} v  v_0 \,\mathrm{d}x\mathrm{d}y\\
	&~~~~~ - \beta r_1 \int_{\mathbb{R}^{2}}|u|^{r_1 -2}u u_0|v|^{r_2}\,\mathrm{d}x\mathrm{d}y
	- \beta r_2 \int_{\mathbb{R}^{2}}|u|^{r_1}|v|^{r_2-2}v  v_0 \,\mathrm{d}x\mathrm{d}y,
\end{align*}
where $J'(u,v)$ is the Fr\'{e}chet derivative of $J$ at $(u,v)$, and the duality bracket between the dual $\mathbb{D}^*$ and product space $\mathbb{D}$  is represented as ${\langle}  \cdot,\cdot \rangle$.

\begin{definition}[\textbf{Pohozaev Manifold}]
The following natural constraint determines the minimum of the energy functional J which represents the ground state solution of the above problem
\begin{align}\label{2.7}
    \mathcal P = \{(u,v)\in \mathbb{D} \setminus \{(0,0)\}: P(u,v)=0\},
\end{align}
where
\begin{align}\label{2.8}
   \begin{split}
		P(u,v) &= s\int_{\mathbb{R}^{2}} \big(|\partial_{x}u|^2 +  |(-\Delta)_{y}^{s /2} u|^2  \big)\,\mathrm{d}x \mathrm{d}y +s\int_{\mathbb{R}^{2}} \big(|\partial_{x}v|^2 +  |(-\Delta)_{y}^{s /2} v|^2 \big)\,\mathrm{d}x \mathrm{d}y  \\
		&\quad - \frac{(1+s)(p-2)}{2p}\mu _1\int_{\mathbb{R}^{2}} |u|^{p}\, \mathrm{d}x \mathrm{d}y -\frac{(1+s)(q-2)}{2q}\mu _2\int_{\mathbb{R}^{2}} |v|^{q}\, \mathrm{d}x \mathrm{d}y  \\
  &\quad-{\frac{(1+s)(r_1+r_2-2)}{2}} \beta \int_{\mathbb{R}^{2}}|u|^{r_1}|v|^{r_2}\, \mathrm{d}x \mathrm{d}y
	\end{split}
 \end{align}
As evident, $P$ belongs to the class ${C}^{1}$ and ${P}^{'}$ maps bounded sets to bounded sets.
The boundedness of the functional $J(u,v)$ on $S_a \times S_b$ fails when $p > \frac{2(1+3s)}{1+s}$. For $(u,v) \in S_a \times S_b$ and $t > 0$, we introduce a scaling argument similar to \ref{eq:ut-invariant}. Consequently, we have $(u_t, v_t) \in S_a \times S_b$, and the energy functional becomes
\[
\begin{split}
	J(u_t,v_t)& = \frac{1}{2}t^{2s}\int_{\mathbb{R}^{2}} \left( |\partial_{x}u|^2 + |(-\Delta)_{y}^{s/2} u|^2 \right)\, \mathrm{d}x \mathrm{d}y + \frac{1}{2}t^{2s}\int_{\mathbb{R}^{2}} \left( |\partial_{x}v|^2 + |(-\Delta)_{y}^{s/2} v|^2 \right)\, \mathrm{d}x \mathrm{d}y\\&\quad
 - t^{\frac{(1+s)(p-2)}{2}} \frac{\mu_1}{p} \int_{\mathbb{R}^{2}} |u|^p \, \mathrm{d}x \mathrm{d}y - t^{\frac{(1+s)(q-2)}{2}} \frac{\mu_2}{q} \int_{\mathbb{R}^{2}} |v|^q \, \mathrm{d}x \mathrm{d}y\\&\quad 
- t^{\frac{(1+s)(r_1 + r_2 - 2)}{2}} \beta \int_{\mathbb{R}^{2}} |u|^{r_1} |v|^{r_2} \, \mathrm{d}x \mathrm{d}y.\end{split}
\]
As $t \to +\infty$, the energy functional $J$, constrained on $S_a \times S_b$, becomes unbounded from below for any $a,b > 0$. Hence, a global minimization approach becomes ineffective in the supercritical case.

Inspired by the minimization method on the Pohozaev manifold, we aim to construct a submanifold within $S_a \times S_b$ where the functional $J(u,v)$ is bounded from below and coercive. Our goal is to find minimizers of the energy functional $E(u)$ on this submanifold, denoted by $\mathcal{P}_{(a,b)}$. 

Additionally, our analysis shows that the energy functional $J|_{S_a \times S_b}$ exhibits a mountain pass geometry. This observation motivates the application of the mountain pass theorem to identify a critical point of the functional. This approach is built upon an advanced version of the min-max principle, initially developed by Ghoussoub~\cite{MR1251958}. 

All critical points of $J$, subject to the constraint ${S_a} \times {S_b}$, lie in $\mathcal{P}$. This constraint originates from the Pohozaev identity. To establish the existence of a ground state solution, we prove the existence of a critical point of $J$ such that 
\[
\inf_{\mathcal{P} \cap (S_a \times S_b)} J(u,v)
\] is achieved.

\end{definition}
Prior to working over the original problem, let us first look into a relaxed problem. Let us introduce the set for $a> 0$ as
\begin{equation*}
%\lab{eq:def-Da}
D_a:=\{u\in \mathcal{H}^{1,s}(\R^2): \|u\|_2^2\leq a\}
\end{equation*}
and for $a>0$, $b>0$ we define
\begin{equation*}
%\lab{eq:20220902-e1}
\mathcal{P}_{(a,b)}:=\mathcal{P}\cap (D_a\times D_b).
\end{equation*}
The relaxed problem is to look for a non$-$trivial critical point $ (u,v)\in$ $\mathbb{D}$ of J restricted to $D_a \times D_b\nonumber\\$ at the level 
\begin{equation}\label{2.6}
    C_{(a,b)}:=\inf_{\mathcal{P}_{(a,b)}}J(u,v).
\end{equation}
%The relaxed problem is to find a critical point $(u,v) \in \mathcal{H} \backslash \{0\}$ of $J$ constrained to $D_a \times D_b$ at the level
%\begin{equation*}
%\lab{eq:20220930-e1}
%C_{(a,b)}:=\inf_{\mathcal{P}_{(a,b)}}J(u,v).
%\end{equation*}
Once we find a solution to the relaxed problem then we consider the original problem.
%i.e. $(u,v)\in S_a\times S_b$ .
\medskip

Following is our main result.
\begin{theorem}\label{th:main-t1}
Let $\frac{2(1+3s)}{1+s}<p,q,r_1+r_2<2_s=\frac{2(1+s)}{1-s}$ and $r_1,r_2>1$. Then the following holds true.
\begin{itemize}
\item[(i)] For $a>0$ and $b\in [b_{p,q,\mu_1,\mu_2,a},+\infty)$. If
$$\begin{cases}
r_1<2\\
\beta>0
\end{cases}~\hbox{or}~
\begin{cases}
r_1=2\\
\beta>\beta_{q,\mu_2,b,r_2} \, ,
\end{cases}$$
then there exists a ground state solution $(\lambda_1,\lambda_2,u,v)$ to  \eqref{eq:20220902-maine1}. In addition, $\lambda_1>0,\lambda_2>0$ and $u, v $ are Steiner symmetric functions.
\item[(ii)]Let $a>0$ and $b\in (0,b_{p,q,\mu_1,\mu_2,a}]$. If
$$\begin{cases}
r_2<2\\
\beta>0
\end{cases}~ \hbox{or}~
\begin{cases}
r_2=2\\
\beta>\beta_{p,\mu_1,a,r_1} \, ,
\end{cases}$$
then there exists a ground state solution $(\lambda_1,\lambda_2,u,v)$ to   \eqref{eq:20220902-maine1}. In addition, $\lambda_1>0,\lambda_2>0$ and $u, v $ are Steiner symmetric functions.
\end{itemize}
\end{theorem}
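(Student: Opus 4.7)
\medskip

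\noindent\textbf{Proof proposal.}
The plan is to attack the relaxed problem on $\mathcal{P}_{(a,b)} = \mathcal{P}\cap (D_a\times D_b)$ first, as suggested by the authors' own remark, and then argue that the mass constraints are automatically saturated under the hypotheses of the theorem. First I would exploit the scaling $(u,v)\mapsto (u_t,v_t)$ (with $t\to 0^+$ making all three nonlinear terms vanish faster than the kinetic terms, and $t\to +\infty$ driving the functional to $-\infty$ in the supercritical regime) to locate a unique positive $t(u,v)$ such that the fiber map $t\mapsto J(u_t,v_t)$ attains its maximum, and to verify that this maximum point lies precisely on $\mathcal{P}$. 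This gives the mountain-pass geometry on $S_a\times S_b$ and identifies the MP level with $\inf_{\mathcal{P}\cap (S_a\times S_b)}J$. Applying Ghoussoub's min-max principle (as cited), I would then build a Palais--Smale sequence $(u_n,v_n)\subset S_a\times S_b$ at level $C_{(a,b)}$ satisfying $P(u_n,v_n)\to 0$.

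Next, using $P(u_n,v_n)\to 0$ to eliminate the $L^p$, $L^q$ and coupling terms in $J(u_n,v_n)$, the value of $J$ on $\mathcal{P}$ becomes a positive multiple of the kinetic energy (since $\frac{(1+s)(p-2)}{2p}>s$ etc. in the supercritical range), so boundedness of $(J(u_n,v_n))$ coupled with the mass bounds yields $\mathbb{D}$-boundedness of $(u_n,v_n)$. At this point I would Steiner-rearrange the sequence with respect to the $y$-variable; since rearrangement decreases the fractional Gagliardo seminorm and preserves both the $L^2$ norms and the polynomial integrals, it does not increase the energy, keeps the sequence inside $D_a\times D_b$, and preserves the property of being a PS sequence (up to extraction). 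Passing to a weak limit $(u,v)$, standard arguments give that $(u,v)$ is a critical point of $J$ on $D_a\times D_b$ with multipliers $\lambda_1,\lambda_2$ and, modulo vanishing/dichotomy, a candidate minimizer.

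The heart of the proof, and the main obstacle, is ruling out \emph{semi-trivial} weak limits (one of $u,v$ zero) and \emph{strict mass loss} (one of $\|u\|_2^2<a$ or $\|v\|_2^2<b$). Both are handled by proving strict sub-additivity estimates of the form $C_{(a,b)} < m_p(a)+m_q(b)$ and $C_{(a,b)}<m_p(a')+m_q(b')+\cdots$ for $a'\le a$, $b'\le b$, where $m_p(a)$ is the ground-state level of the uncoupled scalar problem studied in \cite{esfahani2024new, gou2023solitary}. The quantity $b_{p,q,\mu_1,\mu_2,a}$ and the thresholds $\beta_{q,\mu_2,b,r_2}$, $\beta_{p,\mu_1,a,r_1}$ are designed to identify which scalar problem is the \emph{dominant} competitor: when $b\ge b_{p,q,\mu_1,\mu_2,a}$ the $v$-component dominates and one must prevent $u\equiv 0$, which forces a condition on the $u$-exponent $r_1$ and, in the borderline case $r_1=2$, a largeness condition on $\beta$; the case (ii) is symmetric. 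The concrete estimate is obtained by testing the functional $J$ along a cut-off of a scalar ground state, with the second component a small localized bump of prescribed mass; expanding $J$ in this small parameter produces a leading cross term that is $O(\beta)$ when $r_1<2$ (hence always negative for small enough perturbation, giving strict inequality for any $\beta>0$), whereas $r_1=2$ makes this term of the same order as the remainder and necessitates $\beta$ above the explicit threshold.

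Finally, once compactness is established and $(u,v)$ is a true minimizer on $\mathcal{P}_{(a,b)}$, I would verify: (a) $(u,v)$ lies in the interior $\{\|u\|_2^2=a,\|v\|_2^2=b\}$, so it solves the original constrained system; (b) the multipliers are positive, by testing the Lagrange equations against $(u,0)$ and $(0,v)$, substituting the Pohozaev identity to rewrite $\lambda_1\|u\|_2^2$ and $\lambda_2\|v\|_2^2$ in terms of strictly positive combinations of the remaining integrals (this is where the lower bound $p,q,r_1+r_2>\frac{2(1+3s)}{1+s}$ enters decisively); (c) the Steiner symmetry is inherited from the rearranged PS sequence together with uniqueness of the weak limit up to translations in the $y$-direction. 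Positivity of $u,v$ follows from the maximum principle for $-\partial_{xx}+(-\Delta)_y^s+\lambda_i$ once $\lambda_i>0$ is known.
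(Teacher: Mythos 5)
Your overall architecture matches the paper's: relax the constraint to $D_a\times D_b$, minimize $J$ on the Pohozaev set $\mathcal{P}_{(a,b)}$, identify $C_{(a,b)}$ with a mountain-pass level via Ghoussoub's principle, compare $C_{(a,b)}$ with the scalar levels to rule out semitrivial limits, and recover mass saturation from positivity of the Lagrange multipliers. Two steps, however, are stated in a way that would not work as written.

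First, the comparison inequality you propose, $C_{(a,b)}<m_p(a)+m_q(b)$, is the wrong target: since both scalar levels are positive, this is strictly weaker than what is needed. If the weak limit is semitrivial, say $u=0$, then $v$ solves the scalar equation with some mass $\delta=\|v\|_2^2\in(0,b]$ and $J(0,v)=m_{q,\mu_2,\delta}\ge m_{q,\mu_2,b}$ (using the monotonicity of $a\mapsto m_{q,\mu_2,a}$, Lemma \ref{lemma3.2}); the contradiction therefore requires $C_{(a,b)}<\min\{m_{p,\mu_1,a},m_{q,\mu_2,b}\}$, which is exactly Lemma \ref{lemma7.2} combined with the role of $b_{p,q,\mu_1,\mu_2,a}$ as the crossover point where $m_{p,\mu_1,a}=m_{q,\mu_2,b}$. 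Your test-function construction (scalar ground state plus a small multiple $lh$ of a second profile, expanded in $l$, with the cross term of order $|l|^{r_2}$ beating $l^2$ when $r_2<2$, and the threshold $\beta_{p,\mu_1,a,r_1}$ appearing when $r_2=2$) is the right one and does deliver the min inequality, so the issue is the stated goal, not the mechanism; but as formulated the inequality is insufficient. Second, you cannot simply Steiner-rearrange the Palais--Smale sequence: symmetrization has no reason to preserve the condition $J'|_{D_a\times D_b}(u_n,v_n)\to 0$. The paper instead rearranges the \emph{minimizing} sequence on $\mathcal{P}_{(a,b)}$ (Lemma \ref{lemma5.1}, which also needs a Riesz-type inequality to control the coupling term $\int|u|^{r_1}|v|^{r_2}$ under rearrangement — a point you omit), builds the min-max paths $g_n$ through these symmetric functions, and uses the conclusion $\mathrm{dist}((u_n,v_n),g_n[0,1])\to 0$ of Ghoussoub's theorem to transfer the compact-embedding gains of symmetric functions to the PS sequence. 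Relatedly, the Steiner symmetry of the final ground state comes from the fact that the infimum $C_{(a,b)}$ is attained by symmetric functions (Remark \ref{remark5.2}), not from any ``uniqueness of the weak limit up to translations,'' which is neither proved nor needed. The remaining ingredients you list (positivity of $\lambda_1,\lambda_2$ via the Pohozaev identity and the lower bound on the exponents, then $\|u\|_2^2=a$, $\|v\|_2^2=b$ by the perturbation $u\mapsto(1+l)u$ as in Lemma \ref{lemma8.2}) agree with the paper.
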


\begin{corollary}\label{cro:main-c2}
Let $\frac{2(1+3s)}{1+s}<p,q,r_1+r_2$ and $r_1>1,r_2>1$. Then
\begin{itemize}
\item[(i)] for any $a>0$ and $\beta>0$, there exists a ground state solution $(\lambda_1,\lambda_2,u,v)$ of   \eqref{eq:20220902-maine1} provided $b\in[b_{p,q,\mu_1,\mu_2,a},+\infty)$ and $r_1\leq 2$.  In addition, $\lambda_1>0,\lambda_2>0$ and $u, v $ are Steiner symmetric functions.
\item[(ii)]for any $a>0$ and $\beta>0$, there exists a positive ground state solution $(\lambda_1,\lambda_2,u,v)$ to   \eqref{eq:20220902-maine1} provided  $b\in(0,b_{p,q,\mu_1,\mu_2,a}]$ and $r_2\leq 2$. In addition, $\lambda_1>0,\lambda_2>0$ and $u, v $ are Steiner symmetric functions.
\end{itemize}
\end{corollary}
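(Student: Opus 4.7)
The plan is to deduce Corollary \ref{cro:main-c2} directly from Theorem \ref{th:main-t1} by a case analysis on $r_1$ (for part (i)) and on $r_2$ (for part (ii)), the main point being that the $\beta$-threshold conditions appearing in the theorem collapse to the trivial requirement $\beta>0$ under the mass hypotheses imposed by the corollary. Throughout I intend to use nothing beyond Theorem \ref{th:main-t1} and the structural information about $\mathcal{P}_{(a,b)}$ and $C_{(a,b)}$ built up in Sections \ref{sec-3}--\ref{sec:Palais_Smale}.

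For part (i), I fix $a>0$, $\beta>0$, $b \in [b_{p,q,\mu_1,\mu_2,a}, +\infty)$, and $r_1 \leq 2$. Since $r_1>1$ is already assumed, I split on whether $r_1<2$ or $r_1=2$. In the strict case $1<r_1<2$, the first alternative of Theorem \ref{th:main-t1}(i) applies verbatim, producing the ground state $(\lambda_1,\lambda_2,u,v)$ with $\lambda_1,\lambda_2>0$ and $u,v$ Steiner symmetric. In the boundary case $r_1=2$, I invoke the second alternative of Theorem \ref{th:main-t1}(i), whose hypothesis $\beta>\beta_{q,\mu_2,b,r_2}$ I must verify for any positive $\beta$; this is where the lower bound $b\geq b_{p,q,\mu_1,\mu_2,a}$ enters. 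By tracing the definition of $\beta_{q,\mu_2,b,r_2}$ coming from the coercivity and mountain-pass estimates of $J$ on $\mathcal{P}_{(a,b)}$, I plan to show that the mass lower bound on $b$ is precisely what forces this threshold to vanish, making any $\beta>0$ admissible.

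Part (ii) is obtained by the mirror argument after exchanging the roles of $(r_1,a,\mu_1,u)$ and $(r_2,b,\mu_2,v)$: the complementary mass range $b\in(0,b_{p,q,\mu_1,\mu_2,a}]$ now plays the role of the cone that guarantees the threshold $\beta_{p,\mu_1,a,r_1}$ from Theorem \ref{th:main-t1}(ii) becomes vacuous in the boundary case $r_2=2$. Positivity of $\lambda_1,\lambda_2$, Steiner symmetry of the components, and nonnegativity (in fact positivity via the maximum-principle arguments referenced in the introduction) are inherited directly from Theorem \ref{th:main-t1}.

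The main obstacle I anticipate is precisely the endpoint situation $r_i=2$: one must verify that the threshold $\beta_{q,\mu_2,b,r_2}$ (respectively $\beta_{p,\mu_1,a,r_1}$) originating in the Pohozaev--mountain-pass analysis on $S_a\times S_b$ indeed collapses on the mass cones prescribed by the corollary. At $r_1=2$ the cross term $\beta\int |u|^{2}|v|^{r_2}$ is quadratic in $u$, and the plan is to exploit the lower bound on $b$ to dominate the corresponding Gagliardo--Nirenberg contribution in the coercivity estimate for $J|_{\mathcal{P}_{(a,b)}}$, thereby driving the $\beta$-threshold to zero. Should this collapse require an additional technical input, a fallback is to restrict the corollary to the open condition $r_i<2$ and recover $r_i=2$ via an approximation $r_i\uparrow 2$ argument based on the continuity of $C_{(a,b)}$ in the exponents, together with the uniform Steiner-symmetric bounds produced in Section \ref{sec-3}.
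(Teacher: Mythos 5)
Your overall reduction is the right one: the corollary does follow from Theorem \ref{th:main-t1} once one shows that the thresholds $\beta_{q,\mu_2,b,r_2}$ and $\beta_{p,\mu_1,a,r_1}$ are not genuine obstructions, and the case split $r_i<2$ versus $r_i=2$ is exactly how the deduction is organized. However, the mechanism you propose for the endpoint case $r_i=2$ is wrong, and this is the entire content of the corollary beyond the theorem. You claim that ``the mass lower bound $b\geq b_{p,q,\mu_1,\mu_2,a}$ is precisely what forces the threshold to vanish'' and that you will see this by dominating a Gagliardo--Nirenberg term in the coercivity estimate on $\mathcal{P}_{(a,b)}$. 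Neither is the case: $\beta_{p,\mu,a,r}$ is defined in \eqref{def:-best-const} as an explicit infimum of a Rayleigh-type quotient against the scalar ground state $z_{p,\mu,a}$, it has nothing to do with coercivity of $J|_{\mathcal{P}_{(a,b)}}$, and its value does not depend on whether $b$ lies above or below $b_{p,q,\mu_1,\mu_2,a}$. The mass condition on $b$ plays a completely different role in the proof: via the monotonicity of $a\mapsto m_{p,\mu,a}$ (Lemma \ref{lemma3.2}) it decides which of $m_{p,\mu_1,a}$ and $m_{q,\mu_2,b}$ realizes the minimum in the compactness condition \eqref{strict_inequality}, so that the one-sided estimate of Lemma \ref{lemma7.2} suffices.

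The actual missing ingredient is Lemma \ref{lemma7.1}: $\beta_{p,\mu,a,r}=0$ \emph{unconditionally}, for every $p\in(2,\infty)$ and every $\mu,a,r>0$. The paper proves this by exploiting the failure of the embedding $\mathcal{H}^{1,s}(\R^2)\hookrightarrow L^\infty(\R^2)$: one takes symmetrized functions $\psi_n$ with $\|\psi_n\|_{\mathcal{H}^{1,s}}=1$ but $\|\psi_n\|_\infty\to\infty$, flattens them into plateaus $h_n$ of radius $n$ without increasing the seminorm in the numerator, and observes that $\int u_p^{r}h_n^2\geq\|\psi_n\|_\infty^2\int_{|x|\le n}u_p^{r}\to\infty$, so the infimum in \eqref{def:-best-const} is $0$ and the condition $\beta>\beta_{q,\mu_2,b,r_2}$ (resp.\ $\beta>\beta_{p,\mu_1,a,r_1}$) holds for every $\beta>0$. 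Without this test-function construction (or some substitute for it) your argument does not close; and your fallback of letting $r_i\uparrow 2$ would require continuity of $C_{(a,b)}$ in the exponents $r_1,r_2$, which is established nowhere in the paper and is itself a nontrivial claim.
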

To attain the further characteristics of the ground states discussed in Theorem \eqref{th:main-t1} and its corollary, we take advantage of the fact that any minimum for $$\inf_{\mathcal{P} \cap (S_a \times S_b)}J(u,v)$$ is a critical point for $J$ constrained to $S_a \times S_b$ and is attained by Steiner symmetric functions. 

\subsection{ Preliminaries}\label{sec:prelim}
\renewcommand{\theequation}{2.\arabic{equation}}
%%%%%%%%%%%%%
This section presents some preliminary observations. \begin{definition}[\textbf{Ground state solution}] \label{def of gss}
	 A non-trivial critical point $(u_1,v_1) \in \mathbb{D} \backslash \{(0, 0)\}$ of $J$ over $\mathbb{D}$ is said to be a ground state solution of \eqref{eq:20220902-maine1} if its energy is minimal among all the non-trivial critical points i.e.
	\begin{equation} \label{ground state level}
		 J(u_{1},v_{1}) = \min \{ J(u,v): (u, v) \in \mathbb{D} \setminus \{(0, 0)\} ~\text{and}~ J'(u,v)=0 \}.
	\end{equation}  
\end{definition}
We recall that there exists a solution to the following scalar problem, for $p \in (2,2_s)$
\begin{equation}
\begin{cases} \label{2.1}
-\partial_{xx}u + (-\Delta)_{y}^s u +  u  =  u^{p-1} ~~~~~~~\text{in} ~ \mathbb{R}^{2},\\
~~u>0 ~~~~~~~~~~~~~~~~~~~~~~~~~~~~~~~~~~~~\text{in} ~\mathbb{R}^{2}
\end{cases} 
\end{equation}
actually it has a unique ground state solution in $ \mathcal{H}^{1,s}(\mathbb{R}^2)$, cf. \cite{gou2023solitary}. For fixed $\mu$ $\in \mathbb{R}^+$ and $\lambda >0$, we consider the solution pair $(\lambda ,w) \in \mathbb{R^+} \times \mathcal{H}^{1,s}(\mathbb{R}^2)$ of
\begin{equation}
\begin{cases}\label{2.2}
 -\partial_{xx}w + (-\Delta)_{y}^s w + \lambda  w  = \mu  w^{p-1} ~~~~~~~\text{in} ~ \mathbb{R}^{2},\\
 ~~w>0 ~~~~~~~~~~~~~~~~~~~~~~~~~~~~~~~~~~~~~~~~~\text{in} ~\mathbb{R}^{2}
\end{cases}
\end{equation}
such that $\int_{\mathbb{R}^2} w^2 \,\mathrm{d}x\mathrm{d}y =a $.
%It is classical that there exist a ground state solution of \eqref{2.2} which can be obtained by proper scaling of the solution to \eqref{2.1}.
We define the energy functional  corresponding to \eqref{2.2} as 
$$F_{p,\mu} = \frac{1}{2}\int_{\mathbb{R}^{2}} \big(|\partial_{x}w|^2 +  |(-\Delta)_{y}^{s /2} w|^2 \big)\mathrm{d}x \mathrm{d}y-\frac{\mu}{p}\int_{\mathbb{R}^{2}} |w|^p  \,\mathrm{d}x \mathrm{d}y $$
and the unique solution corresponding to   \eqref{2.2} is formally given by 
\begin{equation}\label{2.5}
    z_{p,\mu,a}(x)=\Big(\frac{\lambda}{\mu}\Big)^{\frac{1}{p-2}}u_p(\lambda^\frac{1}{2}x,\lambda^\frac{1}{2s}y)
\end{equation}
where $u_p(x,y)$ is a solution of \ref{2.1} and
\begin{equation}\label{2.4}
\lambda=a^{\frac{-2s(p-2)}{(1+s)(p-2)-4s}}\|u_p\|_2^{\frac{4s(p-2)}{(1+s)(p-2)-4s}}\mu^{\frac{-4s}{(1+s)(p-2)-4s}}
\end{equation}
and $p\in (2,2_s)\backslash \Big\{\frac{2(1+3s)}{1+s}\Big\}$. 

Let us define the set
\begin{equation*}
%\lab{eq:dp-1}
\tilde{\mathcal{P}}_{p,\mu}:=\left\{w\in \mathcal{H}^{1,s}(\mathbb{R}^{2})\setminus \{0\}:s \int_{\mathbb{R}^{2}} \big(|\partial_{x}w|^2 +  |(-\Delta)_{y}^{s /2} w|^2\big)\,\mathrm{d}x \mathrm{d}y=\frac{(1+s)(p-2)}{2p}\mu\int_{\mathbb{R}^{2}} |w|^{p}\, \mathrm{d}x \mathrm{d}y\right\},
\end{equation*}
where $\mu$ is as in \eqref{2.2} and for $a>0$, consider $\tilde{\mathcal{P}}_{p,\mu,a}:=\tilde{\mathcal{P}}_{p,\mu}\cap S_a$.
%\begin{equation*}
%\lab{eq:dp-2}
%\tilde{\mathcal{P}}_{p,\mu,a}:=\tilde{\mathcal{P}}_{p,\mu}\cap S_a=\left\{\int_{\mathbb{R}^2} w^2 \,\mathrm{d}x \mathrm{d}y =a: s\int_{\mathbb{R}^{2}} \big(|\partial_{x}w|^2 +  |(-\Delta)_{y}^{s /2} w|^2  \big)\,\mathrm{d}x \mathrm{d}y=\frac{(1+s)(p-2)}{2p}\mu _1\int_{\mathbb{R}^{2}} |w|^{p}\, \mathrm{d}x \mathrm{d}y\right\}.
%\end{equation*}
Recalling the fiber map
\begin{equation}\label{eq:ut-invariant}
w_t(x,y) :=t^{\frac{1+s}{2}}w(t^{s}x,ty)
%u(x)\mapsto (t\star w)(x):=tu(tx),
\end{equation}
for $t\in \R^+$, which keeps the $L^2$-norm preserved, we get the following results.
\begin{lemma}\label{lemma3.1}
\begin{itemize}
\item[(i)] The unique solution of \eqref{2.2} belongs to $ \tilde{\mathcal{P}}_{p,\mu,a}$.  Moreover, $F_{p,\mu}$ is minimized on $\tilde{\mathcal{P}}_{p,\mu,a}$ by the unique solution to \eqref{2.2}.
\item[(ii)]If $p\neq \frac{2(1+3s)}{1+s}$, then for any $w\neq 0$, there exists a unique $l=l(w)>0$ such that $w_l\in \tilde{\mathcal{P}}_{p,\mu}$ and it holds that
\begin{equation}
\label{eq:20210622-xe1}
F_{p,\mu}(w_l)=\begin{cases}
\max_{l>0} F_{p,\mu}(w_l),&\hbox{if}~\frac{2(1+3s)}{1+s}<p<2_s,\\
\min_{l>0} F_{p,\mu}(w_l), &\hbox{if}~2<p<\frac{2(1+3s)}{1+s}.
\end{cases}
\end{equation}
\item[(iii)]Let $z_{p,\mu,a}$ be defined by \eqref{2.5}. Then
\begin{align*}
m_{p,\mu,a}:=&F_{p,\mu}(z_{p,\mu,a})=\inf_{w\in \tilde{\mathcal{P}}_{p,\mu,a}}F_{p,\mu}(w)\\
=&\begin{cases}
    \inf_{w\in S_a}\max_{l>0} F_{p,\mu}(w_l),&\hbox{if}~\frac{2(1+3s)}{1+s}<p<2_s,\\
    \inf_{w\in S_a}\min_{l>0} F_{p,\mu}(w_l)=\inf_{w\in S_a}F_{p,\mu}(w),&\hbox{if}~2<p<\frac{2(1+3s)}{1+s}.
    \end{cases}
\end{align*}
\end{itemize}
\end{lemma}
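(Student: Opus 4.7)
My plan is to reduce all three assertions to the one-variable profile $g_w(t):=F_{p,\mu}(w_t)$ obtained from the $L^2$-preserving dilation \eqref{eq:ut-invariant}. A direct change of variables, combined with the Fourier characterisation of $(-\Delta)_y^{s/2}$, gives $\|w_t\|_{L^2}=\|w\|_{L^2}$ together with
\[
A(w_t)=t^{2s}A(w),\qquad B(w_t)=t^{\frac{(1+s)(p-2)}{2}}B(w),
\]
where $A(w):=\|\partial_x w\|_{L^2}^2+\|(-\Delta)_y^{s/2}w\|_{L^2}^2$ and $B(w):=\|w\|_{L^p}^p$. Consequently
\[
g_w(t)=\tfrac{1}{2}t^{2s}A(w)-\tfrac{\mu}{p}t^{\frac{(1+s)(p-2)}{2}}B(w),
\]
and differentiating yields the key identity $t\,g_w'(t)=s\,t^{2s}A(w)-\tfrac{(1+s)(p-2)\mu}{2p}\,t^{\frac{(1+s)(p-2)}{2}}B(w)$, whose right-hand side is precisely the quantity defining $\tilde{\mathcal{P}}_{p,\mu}$ evaluated at $w_t$. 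Therefore $w_t\in\tilde{\mathcal{P}}_{p,\mu}$ if and only if $g_w'(t)=0$.

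\textbf{Part (ii).} The equation $g_w'(t)=0$ reduces to $t^{2s-(1+s)(p-2)/2}$ being equal to a positive constant depending on $w$; the exponent $\tfrac{2(1+3s)-p(1+s)}{2}$ is nonzero by assumption, so a unique $l(w)>0$ exists. Since $g_w(0)=0$ and exactly one of the two power terms dominates at infinity, I obtain $g_w(t)\to-\infty$ when $p>\tfrac{2(1+3s)}{1+s}$ (so $l(w)$ is a strict maximum) and $g_w(t)\to+\infty$ in the opposite regime (so $l(w)$ is a strict minimum), proving \eqref{eq:20210622-xe1}.

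\textbf{Parts (i) and (iii).} If $w$ solves \eqref{2.2}, then the $t$-independence of $\|w_t\|_{L^2}$ and the criticality of $w$ force $\tfrac{d}{dt}\big|_{t=1}F_{p,\mu}(w_t)=0$, i.e.\ $P_{p,\mu}(w)=0$; applied to $z_{p,\mu,a}$ this places it in $\tilde{\mathcal{P}}_{p,\mu,a}$. For the min-max formula I take the case $p>\tfrac{2(1+3s)}{1+s}$ (the other is symmetric). For every $w\in S_a$, the dilation $w_{l(w)}$ still has mass $a$ and lies in $\tilde{\mathcal{P}}_{p,\mu,a}$, so $\max_{l>0}F_{p,\mu}(w_l)=F_{p,\mu}(w_{l(w)})\ge\inf_{\tilde{\mathcal{P}}_{p,\mu,a}}F_{p,\mu}$; conversely, every $v\in\tilde{\mathcal{P}}_{p,\mu,a}$ has $l(v)=1$ by uniqueness, hence $F_{p,\mu}(v)=\max_l F_{p,\mu}(v_l)\ge\inf_{w\in S_a}\max_l F_{p,\mu}(w_l)$. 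Taking infima yields the desired identity. To then identify the common value with $F_{p,\mu}(z_{p,\mu,a})$, any minimizer on $\tilde{\mathcal{P}}_{p,\mu,a}$ is, by Lagrange multipliers on the $C^1$ constraint, a critical point of $F_{p,\mu}$ on $S_a$, hence a positive solution of \eqref{2.2}; uniqueness of positive ground states of \eqref{2.1} from \cite{gou2023solitary}, combined with the rescaling \eqref{2.5}, forces it to coincide with $z_{p,\mu,a}$.

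\textbf{Expected obstacle.} The delicate step is the last one: verifying that $\tilde{\mathcal{P}}_{p,\mu,a}$ is a genuine $C^1$ manifold so that the Lagrange multiplier rule applies (non-vanishing of $P_{p,\mu}'$ reduces once more to the exponent computation and requires $p\neq\tfrac{2(1+3s)}{1+s}$), and then invoking the uniqueness of the positive ground state of \eqref{2.1} to rule out spurious minimizers. The regime $\tfrac{2(1+3s)}{1+s}<p<2_s$ is also what provides the sign needed for coercivity of $F_{p,\mu}$ along the fibre, ensuring that a minimizer actually exists.
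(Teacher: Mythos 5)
Your treatment of part (ii) and of the min--max identities in part (iii) is essentially identical to the paper's: the paper also reduces everything to the fiber profile $f(l)=F_{p,\mu}(w_l)=\frac{l^{2s}}{2}A(w)-\frac{\mu}{p}l^{\frac{(1+s)(p-2)}{2}}B(w)$, observes that $w_l\in\tilde{\mathcal{P}}_{p,\mu}$ iff $l$ is a critical point of $f$, solves for the unique $l_w$ using $4s\neq(1+s)(p-2)$, and reads off max versus min from the sign of $f$ and the competition of the two exponents; your two-sided inequality argument for $\inf_{\tilde{\mathcal{P}}_{p,\mu,a}}F_{p,\mu}=\inf_{S_a}\max_{l}F_{p,\mu}(w_l)$ is exactly the mechanism the paper invokes when it says (iii) follows ``by (i) and (ii).''

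The divergence is in part (i) and in the identification of the minimizer with $z_{p,\mu,a}$. The paper disposes of both by citation: the membership in $\tilde{\mathcal{P}}_{p,\mu,a}$ is attributed to the Pohozaev identity (established in the reference \cite{esfahani2024new}), and the statement that the unique solution of \eqref{2.2} minimizes $F_{p,\mu}$ on $\tilde{\mathcal{P}}_{p,\mu,a}$ is taken from \cite[Theorem 2.1]{MR3539467}. You instead try to prove these from scratch, and that is where the genuine gap sits. First, your derivation of the Pohozaev identity by differentiating $F_{p,\mu}(w_t)$ at $t=1$ and using criticality is only formal: it tests the equation against $sx\partial_x w+y\partial_y w+\frac{1+s}{2}w$, which is not a priori an admissible direction in $\mathcal{H}^{1,s}(\mathbb{R}^2)$; a rigorous Pohozaev identity for this anisotropic operator requires the regularity/decay analysis that the cited reference supplies. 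Second, and more seriously, your route to $m_{p,\mu,a}=F_{p,\mu}(z_{p,\mu,a})$ presupposes that the infimum over $\tilde{\mathcal{P}}_{p,\mu,a}$ is \emph{attained}: ``coercivity along the fibre'' does not yield a minimizer on $\mathbb{R}^2$, where translation invariance destroys compactness; one needs concentration--compactness or Steiner symmetrization plus a compact embedding before the Lagrange-multiplier-and-uniqueness step can even start. You would also need to justify that a minimizer can be taken nonnegative (so that uniqueness of the \emph{positive} ground state of \eqref{2.1} applies) and to rule out the multiplier on the Pohozaev constraint (your $\phi''\neq 0$ remark handles this, as in Lemmas \ref{lemma4.8}--\ref{lemma4.9}). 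If you do not want to reprove the existence theory, the honest fix is to do what the paper does and quote \cite[Theorem 2.1]{MR3539467} for the minimality statement, keeping your fiber analysis for (ii) and (iii).
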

\begin{proof}
 (i) The proof of the first part is a simple consequence of Pohozaev identity \eqref{2.8} and for the second part, we refer \cite[Theorem 2.1]{MR3539467}.\\
 (ii) Let $w \neq 0$ and $l>0$ then a straightforward computation yields
\begin{equation*}
    f(l)= F_{p,\mu}(w_l)= \frac{l^{2s}}{2}\int_{\mathbb{R}^{2}} \big(|\partial_{x}w|^2 +  |(-\Delta)_{y}^{s /2} w|^2 \big)\mathrm{d}x \mathrm{d}y-l^{\frac{(1+s)(p-2)}{2}}\frac{\mu}{p}\int_{\mathbb{R}^{2}} |w|^p  \,\mathrm{d}x \mathrm{d}y.
\end{equation*}
We can see that  $w_l\in \tilde{\mathcal{P}}_{p,\mu}$ if and only if '$l$' is a critical point of $f$. It is easy to verify that when $p \neq \frac{2(1+3s)}{(1+s)}$, following is a critical point of $f$:
\begin{equation*}
    l_w=\bigg(\frac{\int_{\mathbb{R}^{2}} \big(|\partial_{x}w|^2 +  |(-\Delta)_{y}^{s /2} w|^2 \big)\mathrm{d}x \mathrm{d}y}{\frac{(1+s)(p-2)}{2sp}{\mu}\int_{\mathbb{R}^{2}} |w|^p  \,\mathrm{d}x \mathrm{d}y} \bigg)^{\frac{2}{(1+s)(p-2)-4s}}
\end{equation*}
and 
\begin{equation*}
    \begin{split}
        f(l_w)&= \frac{(1+s)(p-2)-4s}{4sp}\bigg({\int_{\mathbb{R}^{2}}}\big(|\partial_{x}w|^2+  |(-\Delta)_{y}^{s /2} w|^2 \big)\mathrm{d}x \mathrm{d}y \bigg)^{\frac{(1+s)(p-2)}{(1+s)(p-2)-4s}}\bigg(\mu \int_{\mathbb{R}^{2}} |w|^p  \,\mathrm{d}x \mathrm{d}y\bigg)^{\frac{-4s}{(1+s)(p-2)-4s}} \\
     & \quad \quad\bigg(\frac{2sp}{(1+s)(p-2)}\bigg)^{\frac{(1+s)(p-2)}{(1+s)(p-2)-4s}}
    \end{split}
\end{equation*}
In addition, the maximum is reached by $f(l)>0$ if $p>\frac{2(1+3s)}{1+s}$, and the minimum is reached by $f(l)<0$ if $p<\frac{2(1+3s)}{1+s}$. Hence,  \eqref{eq:20210622-xe1} holds.\\
(iii) Since $z_{p,\mu,a}$ minimizes $F_{p,\mu}$. Hence $F_{p,\mu}$ achieves its infimum, and by (i) and (ii) we can say that if $\frac{2(1+3s)}{1+s}<p<2_s$ then $m_{p,\mu,a}=\inf_{w\in S_a}\max_{l>0} F_{p,\mu}(w_l)$ and if $2<p<\frac{2(1+3s)}{1+s}$ then $$m_{p,\mu,a}=\inf_{w\in S_a}\min_{l>0} F_{p,\mu}(w_l).$$ 
This completes the proof.
 \end{proof}
\begin{lemma}\label{lemma3.2}
\begin{enumerate}
\item[(i)]If $p\in \left(\frac{2(1+3s)}{1+s},\frac{2(1+s)}{1-s}\right)$, then $m_{p,\mu,a}$ is a continuous, strictly decreasing function with respect to $a\in \R^+$ and
 $$\lim_{a\rightarrow 0^+}m_{p,\mu,a}=+\infty \;\hbox{and}\;\lim_{a\rightarrow +\infty}m_{p,\mu,a}=0.$$
\item[(ii)]If $p\in \left(2,\frac{2(1+3s)}{1+s}\right)$, then $m_{p,\mu,a}$ is a continuous, strictly decreasing function with respect to $a\in \R^+$ and
 $$\lim_{a\rightarrow 0^+}m_{p,\mu,a}=0 \;\hbox{and}\;\lim_{a\rightarrow +\infty}m_{p,\mu,a}=-\infty.$$
\end{enumerate}
\end{lemma}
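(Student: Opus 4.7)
The plan is to exploit the fact that, thanks to Lemma \ref{lemma3.1}(iii) together with the explicit scaling formulas \eqref{2.5}--\eqref{2.4}, the number $m_{p,\mu,a}$ is a completely explicit power of $a$. Once this closed form is in hand, continuity, strict monotonicity, and both limits drop out immediately from the sign of a single exponent; the two dichotomies in (i) and (ii) correspond to the two signs of $E:=(1+s)(p-2)-4s$ at the threshold $p=\frac{2(1+3s)}{1+s}$.

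The first step is to use the Pohozaev identity defining $\tilde{\mathcal P}_{p,\mu}$ to rewrite $F_{p,\mu}$ on the manifold. Since $z_{p,\mu,a}\in \tilde{\mathcal P}_{p,\mu,a}$ by Lemma \ref{lemma3.1}(i), substitution of $s\int_{\R^2}(|\partial_x w|^2+|(-\Delta)_y^{s/2}w|^2)\,\dxy=\frac{(1+s)(p-2)}{2p}\mu\int_{\R^2}|w|^p\,\dxy$ into $F_{p,\mu}$ yields
\begin{equation*}
F_{p,\mu}(w)=\frac{\mu\,E}{4sp}\int_{\R^2}|w|^p\,\dxy\qquad\text{for every }w\in \tilde{\mathcal P}_{p,\mu}.
\end{equation*}
In particular the sign of $F_{p,\mu}(z_{p,\mu,a})$ is exactly the sign of $E$, giving positivity in case (i) and negativity in case (ii) for free.

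The second step is to extract the $a$-dependence of $\|z_{p,\mu,a}\|_p^p$ from the anisotropic rescaling in \eqref{2.5}. A direct change of variables $(x,y)\mapsto(\lambda^{1/2}x,\lambda^{1/(2s)}y)$ gives
\begin{equation*}
\|z_{p,\mu,a}\|_p^p=\mu^{-p/(p-2)}\,\|u_p\|_p^p\,\lambda^{\gamma},\qquad \gamma=\frac{2(1+s)-(1-s)p}{2s(p-2)},
\end{equation*}
where $\gamma>0$ because the strict inequality $p<2_s=\frac{2(1+s)}{1-s}$ forces $2(1+s)-(1-s)p>0$. Inserting the value of $\lambda$ from \eqref{2.4}, which is a pure power $\lambda=c(s,p,\mu)\,a^{-2s(p-2)/E}$, and combining with the previous display, one obtains
\begin{equation*}
m_{p,\mu,a}=K_{p,\mu,s}\,a^{\sigma},\qquad \sigma=-\frac{2(1+s)-(1-s)p}{E},
\end{equation*}
where $K_{p,\mu,s}$ is a constant with $\operatorname{sgn}(K_{p,\mu,s})=\operatorname{sgn}(E)$.

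The last step is just to read off the conclusion. Continuity in $a$ is trivial from the power-law form. In case (i), $E>0$ so $K_{p,\mu,s}>0$ and the numerator of $-\sigma$ is positive, making $\sigma<0$; hence $a\mapsto m_{p,\mu,a}$ is strictly decreasing and positive, with $m_{p,\mu,a}\to+\infty$ as $a\to 0^+$ and $m_{p,\mu,a}\to 0^+$ as $a\to+\infty$. In case (ii), $E<0$ gives $K_{p,\mu,s}<0$ and $\sigma>0$, so $a^\sigma$ is increasing while $K_{p,\mu,s}<0$, producing a strictly decreasing negative function with the limits $0^-$ at $0$ and $-\infty$ at $\infty$. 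There is no genuine obstacle; the only point requiring a little care is to verify once and for all that the sign conditions on $\sigma$, on $\gamma$, and on $E$ line up correctly in the prescribed ranges of $p$ and $s$, which is the elementary inequality $\frac{2(1+3s)}{1+s}<p<\frac{2(1+s)}{1-s}$ separating the two regimes.
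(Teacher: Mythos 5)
Your proposal is correct and follows essentially the same route as the paper: both reduce $m_{p,\mu,a}=F_{p,\mu}(z_{p,\mu,a})$ to an explicit power law $K\,a^{\sigma}$ via the Pohozaev relation on $\tilde{\mathcal P}_{p,\mu}$ and the scaling \eqref{2.5}--\eqref{2.4}, and then read off monotonicity and the limits from the signs of $K$ and $\sigma$ (your exponent $\sigma=-\frac{2(1+s)-(1-s)p}{(1+s)(p-2)-4s}$ is exactly the paper's $\frac{-2sp+(1+s)(p-2)}{(1+s)(p-2)-4s}$). Your write-up is in fact slightly more complete, since you carry out the change of variables and sign checks that the paper only states.
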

\begin{proof}
By Lemma \ref{lemma3.1}-(iii), we know that $m_{p,\mu,a}=F_{p,\mu}(z_{p,\mu,a})$ which gives
\begin{align*}
 {m_{p,\mu,a}=\Big(\frac{(1+s)(p-2)-4s}{4sp}\Big)\mu^{\frac{-4s}{(1+s)(p-2)-4s}}\|u_p\|_2^{\frac{4sp-2(1+s)(p-2)}{(1+s)(p-2)-4s}}a^{\frac{-2sp+(1+s)(p-2)}{(1+s)(p-2)-4s}}\int_{\mathbb{R}^{2}}|u|^p\mathrm{d}x \mathrm{d}y}  .
\end{align*}
  If $\frac{2(1+3s)}{1+s}<p<\frac{2(1+s)}{1-s}$, we see that the exponent ${\frac{-2sp+(1+s)(p-2)}{(1+s)(p-2)-4s}}<0$, hence $m_{p,\mu,a}$ strictly decreases with respect to $a\in \R^+$. Furthermore,
 $$\lim_{a\rightarrow 0^+}m_{p,\mu,a}=+\infty \;\hbox{and}\;\lim_{a\rightarrow +\infty}m_{p,\mu,a}=0.$$
 Similarly, if $2<p<\frac{2(1+3s)}{1+s}$, then ${\frac{-2sp+(1+s)(p-2)}{(1+s)(p-2)-4s}}>0$, and noting that
 $\frac{(1+s)(p-2)-4s}{4sp}<0.$ Thus in this scenario as well, $m_{p,\mu,a}$ strictly decreases with respect to $a\in \R^+$. Additionally,
 $$\lim_{a\rightarrow 0^+}m_{p,\mu,a}=0 \;\hbox{and}\;\lim_{a\rightarrow +\infty}m_{p,\mu,a}=-\infty.$$
 This finishes the proof.
\end{proof}
Now for fixed $p,q,\mu_1,\mu_2$ as given in \eqref{eq:20220902-maine1} and for $a>0$, let us define
$$\alpha=\frac{\mu_2{((q-2)(1+s)-4s)(2sp-(1+s)(p-2))}}{\mu_1{((p-2)(1+s)-4s)}{(2sq-(1+s)(q-2))}}  {}$$\\
and
\begin{align*}
%\lab{eq:20220903-e1}
 b_{p,q,\mu_1,\mu_2,a}:=&\left(\alpha\right)^{\frac{(q-2)(1+s)-4s}{2sq-(1+s)(q-2)}}
\|u_q\|_{2}^{\frac{4(q-2)}{2sq-(1+s)(q-2)}}   \|u_p\|_{2}^{-\frac{4(p-2)}{2sq-(1+s)(q-2)} \frac{(q-2)(1+s)-4s}{(p-2)(1+s)-4s}} a^{{{\frac{(q-2)(1+s)-4s}{(p-2)(1+s)-4s} \frac{2sp-(1+s)(p-2)}{2sq-(1+s)(q-2)}}}}.
\end{align*}
and
\begin{equation}\label{def:-best-const}
 \beta_{p,\mu,a,r}:=\frac{1}{2}\inf_{h\in \mathcal{H}^{1,s}(\R^2)\backslash\{0\}}\frac{{\Big[{\int_{\mathbb{R}^{2}} \big(|\partial_{x}h|^2 +  |(-\Delta)_{y}^{s /2} h|^2 \big)\,\mathrm{d}x \mathrm{d}y }\Big]}}{\int_{\R^2}|z_{p,\mu,a}|^{r} |h|^2 \mathrm{d}x\mathrm{d}y}.
\end{equation}

\section{ Pohozaev type constraint}\label{sec-3}
\renewcommand{\theequation}{3.\arabic{equation}}

\begin{definition}[\textbf{Pohozaev Manifold}]
The following natural constraint determines the minimum of the energy functional J which represents the ground state solution of the above problem
\begin{align}
    \mathcal P = \{(u,v)\in \mathbb{D} \setminus \{(0,0)\}: P(u,v)=0\},
\end{align}
where
\begin{align}
   \begin{split}
		P(u,v) &= s\int_{\mathbb{R}^{2}} \big(|\partial_{x}u|^2 +  |(-\Delta)_{y}^{s /2} u|^2  \big)\,\mathrm{d}x \mathrm{d}y +s\int_{\mathbb{R}^{2}} \big(|\partial_{x}v|^2 +  |(-\Delta)_{y}^{s /2} v|^2 \big)\,\mathrm{d}x \mathrm{d}y  \\
		&\quad - \frac{(1+s)(p-2)}{2p}\mu _1\int_{\mathbb{R}^{2}} |u|^{p}\, \mathrm{d}x \mathrm{d}y -\frac{(1+s)(q-2)}{2q}\mu _2\int_{\mathbb{R}^{2}} |v|^{q}\, \mathrm{d}x \mathrm{d}y  \\
  &\quad-{\frac{(1+s)(r_1+r_2-2)}{2}} \beta \int_{\mathbb{R}^{2}}|u|^{r_1}|v|^{r_2}\, \mathrm{d}x \mathrm{d}y%-{\frac{(1+s)(r_2-1)}{2}} \beta \int_{\mathbb{R}^{2}}|u|^{r_1}|v|^{r_2}\, \mathrm{d}x \mathrm{d}y.
	\end{split}
 \end{align}
 \end{definition}
As evident, $P$ belongs to the class ${C}^{1}$ and ${P}^{'}$ maps bounded sets to bounded sets.  Let us introduce the set for $a> 0$ as
\begin{equation*}
%\lab{eq:def-Da}
D_a:=\{u\in \mathcal{H}^{1,s}(\R^2): \|u\|_2^2\leq a\}
\end{equation*}
and for $a>0$, $b>0$ we define
\begin{equation*}
%\lab{eq:20220902-e1}
\mathcal{P}_{(a,b)}:=\mathcal{P}\cap (D_a\times D_b).
\end{equation*}
We prove that the constraint $\mathcal{P}$ does not yield a Lagrange parameter in this section, among other aspects of the constraint identified. \textit{Assume that $\frac{2(1+3s)}{1+s}<p,q,r_1+r_2<2_s=\frac{2(1+s)}{1-s}$ throughout this article}. To begin with, following \cite[Theorem 2.2]{esfahani2024new}, we can establish the result below.
 \begin{lemma}\label{lemma 4.1}
Every non-negative solution  $(u,v)\in \mathbb{D}\backslash\{(0,0)\}$ of
\begin{equation}{\label{4.1}}
    \left\{
     \begin{aligned}
         -\partial_{xx}u + (-\Delta)_{y}^{s} u + \lambda _1 u  = \mu _1 u^{p-1}+ \beta r_1 u^{r_1-1} v^{r_2},  \\
            -\partial_{xx}v + (-\Delta)_{y}^{s} v +\lambda _2 v  = \mu _2 v^{q-1} + \beta r_2 u^{r_1}v^{r_2-1}  ,
    \end{aligned}
    \right.
\end{equation}
belongs to $ \mathcal{P}$, where $\mathcal{P}$ is the Pohozaev manifold defined in \eqref{2.7}.
 \end{lemma}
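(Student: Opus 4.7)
The approach is to derive a Pohozaev identity by applying the $L^2$-preserving fiber map from \eqref{eq:ut-invariant} to both components of the solution pair. For $t>0$, set $u_t(x,y) := t^{(1+s)/2}u(t^s x, t y)$ and $v_t(x,y) := t^{(1+s)/2}v(t^s x, t y)$. A change of variables, combined with the homogeneity $(-\Delta)_y^{s/2}[w(x,t y)] = t^s\,((-\Delta)_y^{s/2}w)(x,t y)$, shows that $\|u_t\|_2 = \|u\|_2$ and $\|v_t\|_2 = \|v\|_2$ for every $t$, that the Dirichlet-type terms scale as $t^{2s}$, and that $\int|u|^p\,\dxy$, $\int|v|^q\,\dxy$, $\int|u|^{r_1}|v|^{r_2}\,\dxy$ acquire factors $t^{(1+s)(p-2)/2}$, $t^{(1+s)(q-2)/2}$, $t^{(1+s)(r_1+r_2-2)/2}$ respectively. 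Substituting these laws into the energy \eqref{energy functional} gives $J(u_t,v_t)$ as an explicit function of $t$, and differentiating at $t=1$ reproduces exactly the quantity $P(u,v)$ appearing in \eqref{2.8}.

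To show that this derivative vanishes, I would use the fact that $(u,v)$ is a weak solution of \eqref{4.1}, so it is an unconstrained critical point of $\Phi(w_1,w_2) := J(w_1,w_2) - \tfrac{\lambda_1}{2}\|w_1\|_2^2 - \tfrac{\lambda_2}{2}\|w_2\|_2^2$ on $\mathbb{D}$. Since the scaling preserves both $L^2$-norms, $\Phi(u_t,v_t)$ and $J(u_t,v_t)$ differ only by a constant in $t$, hence $\tfrac{d}{dt}\Phi(u_t,v_t)\big|_{t=1} = \tfrac{d}{dt}J(u_t,v_t)\big|_{t=1}$. Applying the critical-point relation $\langle \Phi'(u,v),(\varphi_1,\varphi_2)\rangle = 0$ along the tangent vector $\varphi_1 := \tfrac{1+s}{2}u + s x\partial_x u + y\partial_y u$ and its analogue $\varphi_2$ for $v$ then delivers $P(u,v) = 0$. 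Because $(u,v) \neq (0,0)$ by hypothesis, this places the pair in the Pohozaev manifold $\mathcal{P}$.

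The main technical obstacle is justifying this formal calculation at the $\mathbb{D}$-regularity level. The tangent vectors $\varphi_1,\varphi_2$ carry unbounded multipliers $x,y$ together with the classical derivative $\partial_y u$, which is only controlled fractionally for $u \in \mathcal{H}^{1,s}(\R^2)$; consequently the interaction of $(-\Delta)_y^{s/2}$ with $y\partial_y$ cannot be handled by naive integration by parts. One remedies this by first bootstrapping additional regularity and decay of $(u,v)$ from \eqref{4.1} and then applying a cut-off approximation in the singular-integral representation of $(-\Delta)_y^{s/2}$, or alternatively by passing to a Caffarelli--Silvestre-type extension in the $y$ variable so that the classical Pohozaev computation applies. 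The full scheme is carried out for the scalar mixed equation in \cite[Theorem 2.2]{esfahani2024new}. Applying that scalar identity to each equation of \eqref{4.1} separately and summing, the two cross-coupling contributions combine via the Leibniz relation $r_1 u^{r_1-1}v^{r_2}\partial_\xi u + r_2 u^{r_1}v^{r_2-1}\partial_\xi v = \partial_\xi(u^{r_1}v^{r_2})$ for $\xi \in \{x,y\}$, yielding the coefficient $\tfrac{(1+s)(r_1+r_2-2)}{2}\beta\int u^{r_1}v^{r_2}\,\dxy$ and completing the identification with $P(u,v)$.
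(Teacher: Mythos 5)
Your proposal is correct and follows essentially the same route as the paper, which simply invokes the scalar Pohozaev identity of \cite[Theorem 2.2]{esfahani2024new}; you supply the details the paper omits (the scaling laws identifying $P(u,v)$ with $\tfrac{d}{dt}J(u_t,v_t)\big|_{t=1}$, the regularity issues behind testing against $sx\partial_x u + y\partial_y u + \tfrac{1+s}{2}u$, and the Leibniz combination of the two coupling terms into $\tfrac{(1+s)(r_1+r_2-2)}{2}\beta\int u^{r_1}v^{r_2}$). No gap.
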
 
Let us now define $\phi_{(u,v)}:\R^+\rightarrow \R$ for every fixed $(u,v)\neq (0,0)$ $\in$ $\mathbb{D}$ by
\begin{align*}
%\lab{eq:20210622-xe3}
\phi_{(u,v)}(t):=J(u_t,v_t)=&\frac{1}{2}t^{2s}\int_{\mathbb{R}^{2}} \big(|\partial_{x}u|^2 +  |(-\Delta)_{y}^{s /2} u|^2 \big)\,\mathrm{d}x \mathrm{d}y +\frac{1}{2}t^{2s}\int_{\mathbb{R}^{2}} \big(|\partial_{x}v|^2 +  |(-\Delta)_{y}^{s /2} v|^2  \big)\,\mathrm{d}x \mathrm{d}y  \\
		&\quad - t^{\frac{(1+s)(p-2)}{2}}\frac{\mu _1}{p}\int_{\mathbb{R}^{2}} |u|^{p}\, \mathrm{d}x \mathrm{d}y -t^{\frac{(1+s)(q-2)}{2}}\frac{\mu _2}{q}\int_{\mathbb{R}^{2}} |v|^{q}\, \mathrm{d}x \mathrm{d}y\\
             & - t^{\frac{(1+s)(r_1+r_2-2)}{2}}{\beta} \int_{\mathbb{R}^{2}}|u|^{r_1}|v|^{r_2}\, \mathrm{d}x \mathrm{d}y
            %-\color{red}t^{\frac{(1+s)(r_1+r_2-2)}{2}}\frac{\beta}{2} \int_{\mathbb{R}^{2}}|u|^{r_1}|v|^{r_2}\, \mathrm{d}x \mathrm{d}y\nonumber.
\end{align*}
A direct calculation shows that
\begin{align}\label{4.2}
\phi'_{(u,v)}(t)=&\frac{d}{dt} J(u_t,v_t)\nonumber\\
=&{s}t^{2s-1}\int_{\mathbb{R}^{2}} \big(|\partial_{x}u|^2 +  |(-\Delta)_{y}^{s /2} u|^2 \big)\,\mathrm{d}x \mathrm{d}y +{s}t^{2s-1}\int_{\mathbb{R}^{2}} \big(|\partial_{x}v|^2 +  |(-\Delta)_{y}^{s /2} v|^2  \big)\,\mathrm{d}x \mathrm{d}y  \nonumber\\
		&\quad - \frac{(1+s)(p-2)}{2}t^{\frac{(1+s)(p-2)-2}{2}}\frac{\mu _1}{p}\int_{\mathbb{R}^{2}} |u|^{p}\, \mathrm{d}x \mathrm{d}y -\frac{(1+s)(q-2)}{2}t^{\frac{(1+s)(q-2)-2}{2}}\frac{\mu _2}{q}\int_{\mathbb{R}^{2}} |v|^{q}\, \mathrm{d}x \mathrm{d}y \nonumber\\
                  &\quad                    - \frac{(1+s)(r_1+r_2-2)}{2}t^{\frac{(1+s)(r_1+{r_2}-2)-2}{2}}{\beta} \int_{\mathbb{R}^{2}}|u|^{r_1}|v|^{r_2}\, \mathrm{d}x \mathrm{d}y \nonumber\\ 
                  %&\color{red}\frac{(1+s)(r_1+r_2-2)}{2}t^{\frac{(1+s)(r_1+{r_2}-2)-2}{2}}\frac{\beta}{2} \int_{\mathbb{R}^{2}}|u|^{r_1}|v|^{r_2}\, \mathrm{d}x \mathrm{d}y\nonumber
\end{align}
which implies $P(u,v)=\phi'_{(u,v)}(1)$ and $P(u_t,v_t)=t \phi'_{(u,v)}(t)$. As a consequence of this, we have the following result.
\begin{lemma}\label{lemma4.2}
For every $(u,v)\neq (0,0)$ $\in$ $\mathbb{D}$, $t\in \R^+$ is a critical point of $\phi_{(u,v)}(t)$ if and only if $(u_t,v_t)\in \mathcal{P}$.
\end{lemma}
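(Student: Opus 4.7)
The plan is to prove Lemma 4.2 directly from the identity $P(u_t,v_t)=t\,\phi'_{(u,v)}(t)$ recorded immediately before the statement. Since the scaling $w\mapsto w_t$ with $w_t(x,y)=t^{(1+s)/2}w(t^s x,ty)$ has already been shown to rescale the three types of quantities appearing in $J$ by the factors $t^{2s}$ (for the mixed anisotropic Dirichlet energy), $t^{(1+s)(p-2)/2}$ (for the $L^p$ term), and $t^{(1+s)(r_1+r_2-2)/2}$ (for the coupling term), the computation of $\phi'_{(u,v)}(t)$ in \eqref{4.2} is already in hand. The task reduces to verifying that the right-hand side of the expression for $P(u_t,v_t)$, obtained by substituting $(u_t,v_t)$ into the definition \eqref{2.8} of $P$ and using the very same scaling formulas, coincides with $t$ times the derivative expression in \eqref{4.2}. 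This is a termwise matching of three scaling exponents, and I would record it as a one-line verification rather than redo the change of variables.

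With that identity in place, the equivalence is almost immediate. First I would argue the forward direction: if $t\in\mathbb{R}^+$ is a critical point of $\phi_{(u,v)}$, then $\phi'_{(u,v)}(t)=0$, whence $P(u_t,v_t)=t\,\phi'_{(u,v)}(t)=0$. To conclude $(u_t,v_t)\in\mathcal{P}$, one further needs $(u_t,v_t)\neq(0,0)$; this follows because the fiber map preserves the $L^2$-norm (as noted in the line introducing \eqref{eq:ut-invariant}), so $\|u_t\|_2=\|u\|_2$ and $\|v_t\|_2=\|v\|_2$, and $(u,v)\neq(0,0)$ forces $(u_t,v_t)\neq(0,0)$. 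Conversely, if $(u_t,v_t)\in\mathcal{P}$, then $P(u_t,v_t)=0$, and since $t>0$, the identity gives $\phi'_{(u,v)}(t)=0$, so $t$ is a critical point of $\phi_{(u,v)}$.

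There is no real obstacle here: the only nontrivial ingredient is the scaling identity for $P$, which is essentially a bookkeeping exercise on the three exponents, and everything else is algebra on the single factor of $t$. The lemma is therefore a structural consequence of writing $J$ along the fiber as a power-sum in $t$, and the whole proof should fit in a few lines once the identity $P(u_t,v_t)=t\,\phi'_{(u,v)}(t)$ is granted.
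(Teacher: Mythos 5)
Your proposal is correct and follows exactly the route the paper takes: the paper derives the expression \eqref{4.2} for $\phi'_{(u,v)}(t)$, records the identity $P(u_t,v_t)=t\,\phi'_{(u,v)}(t)$, and presents the lemma as an immediate consequence, just as you do. Your additional remark that $(u_t,v_t)\neq(0,0)$ because the fiber map preserves the $L^2$-norm is a small point the paper leaves implicit, but it is the right observation and completes the argument cleanly.
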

%\begin{proof}
%We use the fact $P(u_t,v_t)=t \phi'_{(u,v)}(t)$ and $t\neq 0$, we see that
%$$(u_t,v_t)\in \mathcal{P}\Leftrightarrow P(u_t,v_t)=0\Leftrightarrow \phi'_{(u,v)}(t)=0.$$
%\end{proof}
\begin{corollary}\label{corollary4.3}
    For any $(0,0)\neq (u,v)\in D_a\times D_b$, there exists a unique $t_0=t_{(u,v)}>0$ such that $(u_t,v_t)\in \mathcal{P}_{(a,b)}$. Furthermore, $t_{(u,v)}<(resp.~=,>) 1$ if and only if $P(u,v)<(resp.~=,>) 0$.
\end{corollary}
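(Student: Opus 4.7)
The plan is to analyze the one-variable function $\phi_{(u,v)}(t)$ defined just before Lemma 4.2 and exploit the fact that the fiber map $(u,v)\mapsto (u_t,v_t)$ preserves $L^2$-norms, so that $u_t\in D_a$ whenever $u\in D_a$ (and likewise for $v$). By Lemma 4.2, membership in $\mathcal P$ is equivalent to $t$ being a critical point of $\phi_{(u,v)}$; the corollary therefore reduces to showing that $\phi_{(u,v)}$ has a \emph{unique} positive critical point, and then extracting the sign comparison between $t_{(u,v)}$ and $1$ from the identity $P(u,v)=\phi'_{(u,v)}(1)$.

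First I would write
\[
\phi_{(u,v)}(t)=A\,t^{2s}-B\,t^{\gamma_p}-C\,t^{\gamma_q}-D\,t^{\gamma_r},
\]
with $A,B,C,D\geq 0$ depending only on $(u,v)$ (and $A>0$ whenever $(u,v)\neq(0,0)$) and
\[
\gamma_p=\frac{(1+s)(p-2)}{2},\quad \gamma_q=\frac{(1+s)(q-2)}{2},\quad \gamma_r=\frac{(1+s)(r_1+r_2-2)}{2}.
\]
The standing hypothesis $p,q,r_1+r_2>\tfrac{2(1+3s)}{1+s}$ is equivalent to $\gamma_p,\gamma_q,\gamma_r>2s$, which is the algebraic fact that drives everything.

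Next, to prove uniqueness of the critical point, I would factor
\[
\phi'_{(u,v)}(t)=t^{2s-1}\,g(t),\qquad g(t):=2sA-\gamma_p B\,t^{\gamma_p-2s}-\gamma_q C\,t^{\gamma_q-2s}-\gamma_r D\,t^{\gamma_r-2s}.
\]
Since each exponent $\gamma_\bullet-2s$ is strictly positive, $g$ is continuous and strictly decreasing on $(0,\infty)$ with $g(0^+)=2sA>0$ and $g(+\infty)=-\infty$ (note that if $(u,v)\neq(0,0)$ then $A>0$, and at least one of $B,C,D$ need not vanish; even if $B=C=D=0$ the claim is vacuous because then $\phi'>0$ everywhere and we fall into case (iii) below, but in our setting this degenerate case is ruled out by $P$ not being identically $sA$). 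Therefore $g$ has a unique zero $t_0\in(0,\infty)$, and this $t_0$ is the unique critical point of $\phi_{(u,v)}$. By Lemma 4.2, $(u_{t_0},v_{t_0})\in\mathcal P$; since $\|u_{t_0}\|_2=\|u\|_2$ and $\|v_{t_0}\|_2=\|v\|_2$, we have $(u_{t_0},v_{t_0})\in D_a\times D_b$, hence in $\mathcal P_{(a,b)}$.

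For the sign comparison I would use the identities $P(u,v)=\phi'_{(u,v)}(1)$ and $\phi'_{(u,v)}(t)=t^{2s-1}g(t)$ together with the strict monotonicity of $g$. Since $g$ is strictly decreasing and vanishes exactly at $t_0$, the sign of $\phi'_{(u,v)}(1)$ is the sign of $g(1)$, which agrees with the sign of $t_0-1$ (reversed): $g(1)>0\iff 1<t_0$, $g(1)=0\iff 1=t_0$, $g(1)<0\iff 1>t_0$. Translating, $P(u,v)>0\iff t_{(u,v)}>1$, and analogously for $=$ and $<$. The main technical check is really just verifying $\gamma_\bullet>2s$ from the mass-supercritical assumption; no compactness or variational input is needed beyond that, so I do not anticipate a genuine obstacle — the only place care is required is in ruling out the degenerate case where $B=C=D=0$, which cannot occur for $(u,v)\neq(0,0)$ in $\mathbb D$ because $u$ or $v$ contributes a strictly positive nonlinear term (and if, say, $v\equiv 0$, the argument reduces verbatim to the scalar case already treated in Lemma \ref{lemma3.1}).
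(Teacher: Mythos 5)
Your proposal is correct and follows essentially the same route as the paper: both reduce the claim to the fibering map $\phi_{(u,v)}$ via Lemma \ref{lemma4.2} and the $L^2$-invariance of the scaling, establish a unique critical point from the condition $\gamma_p,\gamma_q,\gamma_r>2s$ (equivalent to the mass-supercritical hypothesis), and read off the sign comparison from $P(u,v)=\phi'_{(u,v)}(1)$. Your explicit factorization $\phi'_{(u,v)}(t)=t^{2s-1}g(t)$ with $g$ strictly decreasing simply spells out the uniqueness step that the paper asserts more tersely.
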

\begin{proof}
For any $t>0$, since $(u,v)\in D_a\times D_b$ we have that $( u_t, v_t)\in D_a\times D_b$, recalling that the fiber map \eqref{eq:ut-invariant} retains the $L^2$-norm. Then by Lemma \ref{lemma4.2}, $(u_t,v_t)\in \mathcal{P}_{(a,b)}$ if and only if $\phi'_{(u,v)}(t)=0$. Now by \eqref{4.2} and $\frac{2(1+3s)}{1+s}<p,q,r_1+r_2<2_s$, we obtain a unique  $t_0=t_{(u,v)}$ satisfying $\phi_{(u,v)}(t_0)=\max_{t>0}\phi_{(u,v)}(t)$. Also
$$\phi'_{(u,v)}(t)>0\;\hbox{for}~0<t<t_{(u,v)}\;\hbox{while}~\phi'_{(u,v)}(t)<0\;\hbox{for}~t>t_{(u,v)}.$$
So combining with $P(u,v)=\phi'_{(u,v)}(1)$, we obtain that
$$P(u,v)<(resp.~=,>)0\Leftrightarrow \phi'_{(u,v)}(1)<(resp.~=,>)0\Leftrightarrow t_{(u,v)}<(resp.~=,>)1.$$
This finishes the proof.
\end{proof}
\begin{lemma}\label{lemma4.4}
For every $(u,v)$ satisfying $P(u,v)=0$, there exists some  $C_0>0$  depending only on $p,q,r_1,r_2$,  such that
\begin{equation*}
J(u,v)\geq C_0 \Big[ \int_{\mathbb{R}^{2}} \big(|\partial_{x}u|^2 +  |(-\Delta)_{y}^{s /2} u|^2 \big)\,\mathrm{d}x \mathrm{d}y +\int_{\mathbb{R}^{2}} \big(|\partial_{x}v|^2 +  |(-\Delta)_{y}^{s /2} v|^2  \big)\,\mathrm{d}x \mathrm{d}y \Big] .
\end{equation*}
\end{lemma}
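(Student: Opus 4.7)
The plan is to treat the Pohozaev identity $P(u,v)=0$ as a linear relation between the quadratic kinetic part and the three potential terms, and to use this relation to bound the potential terms above by a multiple of the kinetic term. Concretely, let me write
\[
A := \int_{\R^2}\bigl(|\partial_x u|^2 + |\Dely u|^2 + |\partial_x v|^2 + |\Dely v|^2\bigr)\,\dxy,
\]
\[
B_1 := \frac{\mu_1}{p}\int_{\R^2}|u|^p\,\dxy, \qquad B_2 := \frac{\mu_2}{q}\int_{\R^2}|v|^q\,\dxy, \qquad B_3 := \beta\int_{\R^2}|u|^{r_1}|v|^{r_2}\,\dxy.
\]
Then $J(u,v) = \tfrac{1}{2}A - B_1 - B_2 - B_3$, and the condition $P(u,v)=0$ rewrites as
\[
sA = c_1 B_1 + c_2 B_2 + c_3 B_3, \qquad c_1 := \tfrac{(1+s)(p-2)}{2},\ c_2 := \tfrac{(1+s)(q-2)}{2},\ c_3 := \tfrac{(1+s)(r_1+r_2-2)}{2}.
\]

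Next, since $B_1,B_2,B_3\ge 0$, setting $c_{\min} := \min(c_1,c_2,c_3)$ gives $c_{\min}(B_1+B_2+B_3) \le c_1B_1+c_2B_2+c_3B_3 = sA$, hence $B_1+B_2+B_3 \le \tfrac{s}{c_{\min}}A$. Substituting this into $J(u,v)$ yields
\[
J(u,v) \;\ge\; \Bigl(\tfrac{1}{2} - \tfrac{s}{c_{\min}}\Bigr)\,A.
\]
The final step is to verify that this constant is strictly positive. The inequality $\tfrac{1}{2} - \tfrac{s}{c_{\min}} > 0$ is equivalent to $c_{\min} > 2s$, i.e.\ $\min(p-2,q-2,r_1+r_2-2) > \tfrac{4s}{1+s}$, which in turn is the same as $\min(p,q,r_1+r_2) > \tfrac{2(1+3s)}{1+s}$. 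This is exactly the standing assumption recorded just above the lemma. Therefore one may take
\[
C_0 := \frac{1}{2} - \frac{2s}{(1+s)\,\min(p-2,\,q-2,\,r_1+r_2-2)} \; > \; 0,
\]
which depends only on $p,q,r_1,r_2$ (and the fixed parameter $s$), as required.

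There is essentially no analytic obstacle here; the proof is a one-line algebraic manipulation once the Pohozaev relation is written in the normalized form above. The only conceptual point worth highlighting is that the threshold $\tfrac{2(1+3s)}{1+s}$ appearing in the hypothesis is precisely the $L^2$-critical exponent for the anisotropic operator, and is exactly the value at which the constant $C_0$ degenerates to zero. Thus the strict supercriticality assumption is not only sufficient but sharp for the conclusion of the lemma in the form stated.
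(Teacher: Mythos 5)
Your proof is correct and is essentially identical to the paper's argument: the paper introduces $\tau:=\max\{\tfrac{2s}{(p-2)(1+s)},\tfrac{2s}{(q-2)(1+s)},\tfrac{2s}{(r_1+r_2-2)(1+s)}\}$, which is exactly your $s/c_{\min}$, uses $P(u,v)=0$ to bound the sum of potential terms by $\tau$ times the kinetic term, and takes $C_0=\tfrac12-\tau$, positive precisely because $\min(p,q,r_1+r_2)>\tfrac{2(1+3s)}{1+s}$. Your closing observation that this threshold is the exact degeneration point of $C_0$ is a nice (correct) remark that the paper does not state explicitly.
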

\begin{proof}
For~$\frac{2(1+3s)}{1+s}<p,q,r_1+r_2<2_s $, we
set
\begin{equation*}
%\lab{eq:def-tau}
\tau:=\max\left\{\frac{2s}{(p-2)(1+s)}, \frac{2s}{(q-2)(1+s)}, \frac{2s}{(r_1+r_2-2)(1+s)}\right\},
\end{equation*}
then one can see that $0<\tau<\frac{1}{2}$.
By the definition of $\tau$,
\begin{align}
%\lab{eq:bu-20210623-xbe5}
\frac{\mu_1}{p}\|u\|_p^p&+\frac{\mu_2}{q}\|v\|_q^q+\beta \int_{\R^2} |u|^{r_1}|v|^{r_2}\mathrm{d}x\mathrm{d}y\nonumber\\
\leq &\tau \left\{\frac{(p-2)(1+s)}{2sp}\mu_1\|u\|_p^p+\frac{(q-2)(1+s)}{2sq}\mu_2\|v\|_q^q
+\frac{(r_1+r_2-2)(1+s)}{2s}\beta \int_{\R^2} |u|^{r_1} |v|^{r_2} \mathrm{d}x\right\} \nonumber
\end{align}
So for any $(u,v)$ with $P(u,v)=0$,  we have that
\begin{equation*}
%\lab{eq:bu-20210623-xbe6}
\begin{split}
&\frac{\mu_1}{p}\|u\|_p^p+\frac{\mu_2}{q}\|v\|_q^q+\beta \int_{\R^2} |u|^{r_1}|v|^{r_2}\mathrm{d}x\mathrm{d}y\\&\qquad
\leq \tau \Big[ \int_{\mathbb{R}^{2}} \big(|\partial_{x}u|^2 +  |(-\Delta)_{y}^{s /2} u|^2 \big)\,\mathrm{d}x \mathrm{d}y +\int_{\mathbb{R}^{2}} \big(|\partial_{x}v|^2 +  |(-\Delta)_{y}^{s /2} v|^2  \big)\,\mathrm{d}x \mathrm{d}y    \Big].
\end{split}
\end{equation*}
 Hence, we can take $C_0:=\frac{1}{2}-\tau>0$ such that
\begin{align*}
J(u,v)=&\frac{1}{2}\Big[ \int_{\mathbb{R}^{2}} \big(|\partial_{x}u|^2 +  |(-\Delta)_{y}^{s /2} u|^2 \big)\,\mathrm{d}x \mathrm{d}y +\int_{\mathbb{R}^{2}} \big(|\partial_{x}v|^2 +  |(-\Delta)_{y}^{s /2} v|^2  \big)\,\mathrm{d}x \mathrm{d}y    \Big]\\
&-\left[\frac{\mu_1}{p}\|u\|_p^p+\frac{\mu_2}{q}\|v\|_q^q+\beta \int_{\R^2} |u|^{r_1}|v|^{r_2}\mathrm{d}x\mathrm{d}y\right]\\
\geq &\frac{1}{2}\Big[ \int_{\mathbb{R}^{2}} \big(|\partial_{x}u|^2 +  |(-\Delta)_{y}^{s /2} u|^2 \big)\,\mathrm{d}x \mathrm{d}y +\int_{\mathbb{R}^{2}} \big(|\partial_{x}v|^2 +  |(-\Delta)_{y}^{s /2} v|^2  \big)\,\mathrm{d}x \mathrm{d}y    \Big]\\
&-\tau \Big[ \int_{\mathbb{R}^{2}} \big(|\partial_{x}u|^2 +  |(-\Delta)_{y}^{s /2} u|^2 \big)\,\mathrm{d}x \mathrm{d}y +\int_{\mathbb{R}^{2}} \big(|\partial_{x}v|^2 +  |(-\Delta)_{y}^{s /2} v|^2  \big)\,\mathrm{d}x \mathrm{d}y    \Big]\\
=&C_0\Big[ \int_{\mathbb{R}^{2}} \big(|\partial_{x}u|^2 +  |(-\Delta)_{y}^{s /2} u|^2 \big)\,\mathrm{d}x \mathrm{d}y +\int_{\mathbb{R}^{2}} \big(|\partial_{x}v|^2 +  |(-\Delta)_{y}^{s /2} v|^2  \big)\,\mathrm{d}x \mathrm{d}y    \Big].
\end{align*}
This finishes the proof.
\end{proof}
The following property is an immediate consequence of the above result.
\begin{corollary}\label{corollary4.5}
For any $a>0,b>0$,
$J\big|_{\mathcal{P}_{(a,b)}}$ is coercive.

\end{corollary}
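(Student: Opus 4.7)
The statement follows almost directly from Lemma \ref{lemma4.4}, and the plan is to make the uniform $L^{2}$ bound provided by the set $D_a\times D_b$ interact with the seminorm lower bound on $\mathcal{P}$.

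\textbf{Setup.} I would take an arbitrary sequence $(u_n,v_n)\in \mathcal{P}_{(a,b)}$ with $\|(u_n,v_n)\|_{\mathbb{D}}\to +\infty$ and aim to show $J(u_n,v_n)\to +\infty$. The key observation is that on $\mathcal{P}_{(a,b)}=\mathcal{P}\cap(D_a\times D_b)$, the $L^{2}$ parts of the $\mathcal{H}^{1,s}$ norms are \emph{a priori} controlled by the constants $a$ and $b$, namely $\|u_n\|_2^2\le a$ and $\|v_n\|_2^2\le b$. Writing
\[
\|(u_n,v_n)\|_{\mathbb{D}}^2=\|u_n\|_2^2+\|v_n\|_2^2+\int_{\R^2}\bigl(|\partial_x u_n|^2+|(-\Delta)_y^{s/2}u_n|^2\bigr)\dxy+\int_{\R^2}\bigl(|\partial_x v_n|^2+|(-\Delta)_y^{s/2}v_n|^2\bigr)\dxy,
\]
the divergence of the left-hand side must therefore be absorbed entirely by the anisotropic seminorm contributions.

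\textbf{Applying Lemma \ref{lemma4.4}.} Since $(u_n,v_n)\in \mathcal{P}$, Lemma \ref{lemma4.4} yields
\[
J(u_n,v_n)\ge C_0\Bigl[\int_{\R^2}\bigl(|\partial_x u_n|^2+|(-\Delta)_y^{s/2}u_n|^2\bigr)\dxy+\int_{\R^2}\bigl(|\partial_x v_n|^2+|(-\Delta)_y^{s/2}v_n|^2\bigr)\dxy\Bigr].
\]
Combining with the display above, we obtain
\[
J(u_n,v_n)\ge C_0\bigl(\|(u_n,v_n)\|_{\mathbb{D}}^2-a-b\bigr),
\]
so $\|(u_n,v_n)\|_{\mathbb{D}}\to+\infty$ forces $J(u_n,v_n)\to+\infty$, which is exactly the coercivity of $J|_{\mathcal{P}_{(a,b)}}$.

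\textbf{Remarks on obstacles.} There is essentially no obstacle once Lemma \ref{lemma4.4} is in hand: the point is simply that Lemma \ref{lemma4.4} controls $J$ from below by the seminorm, and the $D_a\times D_b$ confinement converts seminorm blow-up into full $\mathbb{D}$-norm blow-up (and conversely). The only delicate input is the use of the strict inequality $\tau<1/2$ arising from $\tfrac{2(1+3s)}{1+s}<p,q,r_1+r_2<2_s$, which produces the positive constant $C_0=\tfrac12-\tau$ and is precisely where the mass-supercritical assumption is felt; this was already handled in the proof of Lemma \ref{lemma4.4}.
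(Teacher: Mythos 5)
Your proof is correct and is exactly the argument the paper intends: the corollary is stated as an immediate consequence of Lemma \ref{lemma4.4}, and you have simply made explicit why the $L^2$ bounds built into $D_a\times D_b$ let the seminorm lower bound control the full $\mathbb{D}$-norm. Nothing further is needed.
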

We now prove that $C_{(a,b)}>0$ over $D_a \times D_b\nonumber$, where
\begin{equation}
    C_{(a,b)}:=\inf_{\mathcal{P}_{(a,b)}}J(u,v).
\end{equation}
It shall be a result of following Lemma.
\begin{lemma}\label{lemma4.6}
For any given $a,~b\geq 0$ with $(a,b)\neq (0,0)$, there exists some $\delta_{(a,b)}>0$ such that
\begin{equation*}
\inf_{(u,v)\in \mathcal{P}_{(a,b)}} \Big[{\int_{\mathbb{R}^{2}} \big(|\partial_{x}u|^2 +  |(-\Delta)_{y}^{s /2} u|^2 \big)\,\mathrm{d}x \mathrm{d}y +\int_{\mathbb{R}^{2}} \big(|\partial_{x}v|^2 +  |(-\Delta)_{y}^{s /2} v|^2  \big)\,\mathrm{d}x \mathrm{d}y   } \Big]\geq \delta_{(a,b)}.
\end{equation*}
\end{lemma}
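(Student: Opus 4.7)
The plan is to argue by contradiction and exploit the fact that the Pohozaev identity combined with an anisotropic Gagliardo-Nirenberg inequality makes the kinetic part of the energy dominate a super-linear function of itself under the standing subcritical assumption. Write $\mathcal{A}(w) := \int_{\mathbb{R}^2}(|\partial_x w|^2 + |(-\Delta)_y^{s/2}w|^2)\,\mathrm{d}x\mathrm{d}y$, and suppose for contradiction that no $\delta_{(a,b)} > 0$ exists. Then one may extract a sequence $(u_n,v_n) \in \mathcal{P}_{(a,b)}$ with $T_n := \mathcal{A}(u_n) + \mathcal{A}(v_n) \to 0^+$.

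The key ingredient is the anisotropic Gagliardo-Nirenberg inequality in $\mathcal{H}^{1,s}(\mathbb{R}^2)$: for every $r \in [2, 2_s]$,
\[
\|w\|_r^r \leq C_r\, \|w\|_2^{2\beta_r}\, \mathcal{A}(w)^{\gamma_r}, \qquad \gamma_r := \frac{(1+s)(r-2)}{4s}, \quad \beta_r := \frac{2sr-(1+s)(r-2)}{4s}.
\]
The exponents are forced by the scaling invariance dictated by the fiber map \eqref{eq:ut-invariant}, and the inequality itself follows from the Sobolev embedding $\mathcal{H}^{1,s}(\mathbb{R}^2) \hookrightarrow L^{2_s}(\mathbb{R}^2)$ together with Hölder interpolation against $L^2$. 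The standing hypothesis $r > 2(1+3s)/(1+s)$ is precisely the statement $\gamma_r > 1$, so each of $\gamma_p,\gamma_q,\gamma_{r_1+r_2}$ is strictly greater than $1$.

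Since $P(u_n,v_n)=0$, rearranging \eqref{2.8} yields
\[
s T_n = \tfrac{(1+s)(p-2)}{2p}\mu_1 \|u_n\|_p^p + \tfrac{(1+s)(q-2)}{2q}\mu_2 \|v_n\|_q^q + \tfrac{(1+s)(r_1+r_2-2)}{2}\beta \int_{\mathbb{R}^2} |u_n|^{r_1}|v_n|^{r_2}\,\mathrm{d}x\mathrm{d}y.
\]
Apply the Gagliardo-Nirenberg bound with $r = p$ and $r = q$ to the first two terms, using $\|u_n\|_2^2 \leq a$ and $\|v_n\|_2^2 \leq b$. For the cross term, first apply Hölder to get $\int |u_n|^{r_1}|v_n|^{r_2} \leq \|u_n\|_{r_1+r_2}^{r_1}\|v_n\|_{r_1+r_2}^{r_2}$, then Gagliardo-Nirenberg with $r = r_1+r_2$ on each factor, and finally the elementary inequality $\mathcal{A}(u_n)^{\alpha_1}\mathcal{A}(v_n)^{\alpha_2} \leq T_n^{\alpha_1+\alpha_2}$ for $\alpha_1,\alpha_2 \geq 0$. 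Collecting these estimates gives
\[
s T_n \leq K(a,b)\,\bigl[T_n^{\gamma_p} + T_n^{\gamma_q} + T_n^{\gamma_{r_1+r_2}}\bigr]
\]
for a constant $K(a,b) > 0$ depending only on $a, b$ and the problem data. Dividing by $T_n > 0$ and letting $n \to \infty$ sends the right-hand side to zero because each of $\gamma_p - 1,\, \gamma_q - 1,\, \gamma_{r_1+r_2} - 1$ is strictly positive, contradicting $s > 0$. The boundary cases $a = 0$ or $b = 0$ force one component to vanish and the cross term disappears, reducing the argument to a single scalar Pohozaev bound to which the same reasoning applies.

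The main obstacle is setting up the anisotropic Gagliardo-Nirenberg inequality with the correct exponents. Scaling under \eqref{eq:ut-invariant} pins down $\gamma_r$ and $\beta_r$ uniquely, so the real content is only the existence of a finite constant $C_r$, which follows from the Sobolev-type embedding for $\mathcal{H}^{1,s}$ cited for \eqref{1.2} in the paper. Once this is granted, the subcritical condition $p, q, r_1+r_2 > 2(1+3s)/(1+s)$ provides exactly the super-linearity that drives the contradiction, and an explicit lower bound $\delta_{(a,b)}$ can in principle be read off as a function of $a, b$ and the data $\mu_1, \mu_2, \beta$.
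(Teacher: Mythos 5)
Your proposal is correct and follows essentially the same route as the paper: both rest on the anisotropic Gagliardo--Nirenberg inequality in $\mathcal{H}^{1,s}(\mathbb{R}^2)$ together with the observation that the condition $p,q,r_1+r_2>\frac{2(1+3s)}{1+s}$ makes each kinetic-energy exponent strictly greater than $1$. The only difference is presentational: the paper splits $P(u,v)=0$ into three cases (one of the nonlinear terms must dominate a third of the kinetic energy) and extracts an explicit lower bound in each, whereas you sum the three Gagliardo--Nirenberg bounds and run a contradiction argument, which is an equivalent reformulation.
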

\begin{proof}
Let us revisit the widely known Gagliardo-Nirenberg inequality \cite[lemma 2.1]{esfahani2024new} .
For $s \in (0, 1)$ and $2 \leq p < p_s = \frac{2(1+s)}{1-s}$, there exists a constant $C_{s,p} > 0$ such that
\begin{equation*}
{\int_{\mathbb{R}^{2}}|u|^p}\leq C_{s,p} \Big({\int_{\mathbb{R}^{2}} \big(|\partial_{x}u|^2}\Big)^{\frac{p-2}{4}} \Big({\int_{\mathbb{R}^{2}} |(-\Delta)_{y}^{s /2} u|^2 \,}\Big)^{\frac{p-2}{4s}}\Big({\int_{\mathbb{R}^{2}}|u|^2\Big)}^{\frac{p}{2}-\frac{(p-2)(1+s)}{4s}} ,\quad \forall{ u\in \mathcal{H}^{1,s}(\R^2)} \\.
\label{eq:main}
\end{equation*}
If $s > 1/2$, then $p = p_s$ is allowed. The sharp constant $C_{s,p}$ is given as
\begin{equation}
C_{s,p}^{-1} = (2ps)^{-1} (p - 2)^{\frac{(p-2)(1+s)}{4s}} s^{\frac{p-2}{4}} \left(2(1 + s) - p(1 - s)\right)^{\frac{4s - (p-2)(1+s)}{4s}} \| \nu \|_{L^2}^{p-2},
\label{eq:constant}
\end{equation}
where $\nu$ is a ground state of \eqref{2.1}. Using the arguments given in \cite[page no. 7]{MR3428469} , it is possible to show that there is a constant $C_H > 0$ such that
\begin{equation*}
  {\int_{\mathbb{R}^{2}}|u|^p}\leq C_H \Big({\int_{\mathbb{R}^{2}} \big(|\partial_{x}u|^2} +{\int_{\mathbb{R}^{2}} |(-\Delta)_{y}^{s /2} u|^2 \ }\Big)^{\frac{(p-2)(1+s)}{4s}}\Big({\int_{\mathbb{R}^{2}}|u|^2\Big)}^{\frac{p}{2}-\frac{(p-2)(1+s)}{4s}} \quad ,\forall{~ u\in \mathcal{H}^{1,s}(\R^2)}\\ , 
\end{equation*}
and the sharp constant $C_H$ is
\begin{equation*}
C_H^{-1} = (2ps)^{-1} ((p - 2)(s + 1))^{\frac{(p-2)(1+s)}{4s}} \left(2(1 + s) - p(1 - s)\right)^{\frac{4s - (p-2)(1+s)}{4s}} \| \nu \|_{L^2}^{p-2}.
\label{eq:sharp_constant}
\end{equation*}
For any $(u,v)\in \mathcal{P}$, equation $P(u,v)=0$ suggests that one or more of the following must be true:
\begin{itemize}
\item[(i)] $\displaystyle \frac{1}{3}\Big[{\int_{\mathbb{R}^{2}} \big(|\partial_{x}u|^2 +  |(-\Delta)_{y}^{s /2} u|^2 \big)\,\mathrm{d}x \mathrm{d}y +\int_{\mathbb{R}^{2}} \big(|\partial_{x}v|^2 +  |(-\Delta)_{y}^{s /2} v|^2  \big)\,\mathrm{d}x \mathrm{d}y   } \Big]\leq \frac{(p-2)(1+s)}{2sp}\mu_1\|u\|_p^p$,
\item[(ii)] $\displaystyle \frac{1}{3}\Big[{\int_{\mathbb{R}^{2}} \big(|\partial_{x}u|^2 +  |(-\Delta)_{y}^{s /2} u|^2 \big)\,\mathrm{d}x \mathrm{d}y +\int_{\mathbb{R}^{2}} \big(|\partial_{x}v|^2 +  |(-\Delta)_{y}^{s /2} v|^2  \big)\,\mathrm{d}x \mathrm{d}y   } \Big]\leq \frac{(q-2)(1+s)}{2sq}\mu_2\|v\|_q^q$,
\item[(iii)]$\displaystyle\frac{1}{3}\Big[{\int_{\mathbb{R}^{2}} \big(|\partial_{x}u|^2 +  |(-\Delta)_{y}^{s /2} u|^2 \big)\,\mathrm{d}x \mathrm{d}y +\int_{\mathbb{R}^{2}} \big(|\partial_{x}v|^2 +  |(-\Delta)_{y}^{s /2} v|^2  \big)\,\mathrm{d}x \mathrm{d}y   } \Big]\\
\quad \quad\leq \frac{(r_1+r_2-2)(1+s)}{2s}\beta \int_{\R^2} |u|^{r_1} |v|^{r_2} \mathrm{d}x\mathrm{d}y$.
\end{itemize}
We now analyze each case separately.\\
{\bf Case $(i) :$} Let $u,v\in {D}_a\times{D}_b$ then $\|u\|_2^2\leq a$ and $\|v\|_2^2\leq b$. Now if $(i)$ holds, we have that
\[\begin{split}
\frac{1}{3}&\Big[{\int_{\mathbb{R}^{2}} \big(|\partial_{x}u|^2 +  |(-\Delta)_{y}^{s /2} u|^2 \big)\,\mathrm{d}x \mathrm{d}y +\int_{\mathbb{R}^{2}} \big(|\partial_{x}v|^2 +  |(-\Delta)_{y}^{s /2} v|^2  \big)\,\mathrm{d}x \mathrm{d}y   } \Big] \\
&\leq  \frac{(p-2)(1+s)}{2sp}\mu_1 C_H\Big[{\int_{\mathbb{R}^{2}} \big(|\partial_{x}u|^2 +  |(-\Delta)_{y}^{s /2} u|^2 \big)\,\mathrm{d}x \mathrm{d}y}\Big]^{\frac{(p-2)(1+s)}{4s}}\Big[\int_{\mathbb{R}^{2}}|u|^{2}\mathrm{d}x \mathrm{d}y\Big]^{\frac{p}{2}-\frac{(p-2)(1+s)}{4s}}\\
&\leq \frac{(p-2)(1+s)}{2sp}\mu_1 C_H \left[{\int_{\mathbb{R}^{2}} \big(|\partial_{x}u|^2 +  |(-\Delta)_{y}^{s /2} u|^2 \big)\,\mathrm{d}x \mathrm{d}y +\int_{\mathbb{R}^{2}} \big(|\partial_{x}v|^2 +  |(-\Delta)_{y}^{s /2} v|^2  \big)\,\mathrm{d}x \mathrm{d}y   }\right]^{\frac{(p-2)(1+s)}{4s}} \\
&   \qquad\qquad\times   \Big[{\int_{\mathbb{R}^{2}}\big(|u|^2+|v|^2}\big)\mathrm{d}x \mathrm{d}y\Big]^{\frac{p}{2}-\frac{(p-2)(1+s)}{4s}}.
\end{split}\]
By $\frac{2(1+3s)}{1+s}<p,q,r_1+r_2<2_s=\frac{2(1+s)}{1-s}$, we have that $\frac{(p-2)(1+s)}{4s}>1$ and ${\frac{p}{2}-\frac{(p-2)(1+s)}{4s}}>0$. Hence,
\begin{align}\label{4.3}
{\Big[{\int_{\mathbb{R}^{2}} \big(|\partial_{x}u|^2 +}}&{{|(-\Delta)_{y}^{s /2} u|^2 \big)\,\mathrm{d}x \mathrm{d}y }+{\int_{\mathbb{R}^{2}} \big(|\partial_{x}v|^2 +  |(-\Delta)_{y}^{s /2} v|^2  \big)\,\mathrm{d}x \mathrm{d}y   } \Big]}\nonumber\\
&\geq \left(\frac{3(p-2)(1+s)}{2sp}\mu_1 C_H\right)^{-\frac{4s}{(1+s)(p-2)-4s}} \Big[{\int_{\mathbb{R}^{2}}\big(|u|^2+|v|^2\big)\mathrm{d}x \mathrm{d}y}\Big]^{-\frac{2sp-(p-2)(1+s)}{(1+s)(p-2)-4s}}.\nonumber\\
&\geq \left(\frac{3(p-2)(1+s)}{2sp}\mu_1 C_H\right)^{-\frac{4s}{(1+s)(p-2)-4s}} \Big[\frac{1}{a+b}\Big]^{\frac{2sp-(p-2)(1+s)}{(1+s)(p-2)-4s}}.
\end{align}
{\bf Case $(ii)$:} Similarly, if $(ii)$ holds, we have
\begin{align}\label{4.4}
\Big[{\int_{\mathbb{R}^{2}} \big(|\partial_{x}u|^2 +} & {|(-\Delta)_{y}^{s /2} u|^2 \big)\,\mathrm{d}x \mathrm{d}y} +{\int_{\mathbb{R}^{2}} \big(|\partial_{x}v|^2 +  |(-\Delta)_{y}^{s /2} v|^2  \big)\,\mathrm{d}x \mathrm{d}y   } \Big]\nonumber\\
&\geq \left(\frac{3(q-2)(1+s)}{2sq}\mu_2 C_H\right)^{-\frac{4s}{(1+s)(q-2)-4s}} \Big[{\int_{\mathbb{R}^{2}}\big(|u|^2+|v|^2\big)\mathrm{d}x \mathrm{d}y}\Big]^{-\frac{2sq-(q-2)(1+s)}{(1+s)(q-2)-4s}}.\nonumber\\
&\geq \left(\frac{3(q-2)(1+s)}{2sq}\mu_2 C_H\right)^{-\frac{4s}{(1+s)(q-2)-4s}} \Big[\frac{1}{a+b}\Big]^{\frac{2sq-(q-2)(1+s)}{(1+s)(q-2)-4s}}.
\end{align}
{\bf Case $(iii)$:}
In the event that $(iii)$ holds, H\"older inequality shows that
\begin{align}\label{4.5}
    \Big[{\int_{\mathbb{R}^{2}} \big(|\partial_{x}u|^2} &{+  |(-\Delta)_{y}^{s /2} u|^2 \big)\,\mathrm{d}x \mathrm{d}y }+{\int_{\mathbb{R}^{2}} \big(|\partial_{x}v|^2 +  |(-\Delta)_{y}^{s /2} v|^2  \big)\,\mathrm{d}x \mathrm{d}y   } \Big]\nonumber\\
    &\geq \left(\frac{3(r_1+r_2-2)(1+s)}{2s}\beta C_H\right)^{-\frac{4s}{(1+s)(r_1+r_2-2)-4s}}   
 \Big[{\int_{\mathbb{R}^{2}}\big(|u|^2+|v|^2\big)\mathrm{d}x \mathrm{d}y}\Big]^{-\frac{2s(r_1+r_2)-(r_1+r_2-2)(1+s)}{(1+s)(r_1+r_2-2)-4s}} .\nonumber\\
  &\geq \left(\frac{3(r_1+r_2-2)(1+s)}{2s}\beta C_H\right)^{-\frac{4s}{(1+s)(r_1+r_2-2)-4s}}   
 \Big[\frac{1}{a+b}\Big]^{\frac{2s(r_1+r_2)-(r_1+r_2-2)(1+s)}{(1+s)(r_1+r_2-2)-4s}}.
\end{align}
In each of the above cases from \eqref{4.3}-\eqref{4.5}, we can find some $\delta_{(a,b)}>0$ such that
\begin{equation*}
%\lab{eq:20210623-pe4}
\Big[{\int_{\mathbb{R}^{2}} \big(|\partial_{x}u|^2 +  |(-\Delta)_{y}^{s /2} u|^2 \big)\,\mathrm{d}x \mathrm{d}y +\int_{\mathbb{R}^{2}} \big(|\partial_{x}v|^2 +  |(-\Delta)_{y}^{s /2} v|^2  \big)\,\mathrm{d}x \mathrm{d}y   }\Big]\geq \delta_{(a,b)}, \quad \mbox{for all } (u,v)\in \mathcal{P}_{(a,b)}.
\end{equation*}
This finishes the proof.
\end{proof}
\begin{corollary}\label{corollary4.7}
For any $(a,b)\in \R^+\times \R^+$,
$$C_{(a,b)}=\inf_{(0,0)\neq (u,v)\in D_a\times D_b}\max_{t>0} J(u_t,v_t)>0.$$
\end{corollary}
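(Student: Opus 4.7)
The plan is to split the claim into two parts: the variational identity expressing $C_{(a,b)}$ as an inf-sup, and the strict positivity of this value. Both parts follow by assembling results already proved earlier in the section, so the task is organizational rather than computational.

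For the identity, I would first use Corollary \ref{corollary4.3}: given any $(u,v) \in (D_a\times D_b)\setminus\{(0,0)\}$, there is a unique $t_0=t_{(u,v)}>0$ with $(u_{t_0},v_{t_0}) \in \mathcal{P}_{(a,b)}$, and by the sign analysis of $\phi'_{(u,v)}$ inside that proof this $t_0$ is the unique global maximizer of $\phi_{(u,v)}(t)=J(u_t,v_t)$ on $\R^+$. This gives the inequality
\[
\max_{t>0} J(u_t,v_t) \;=\; J(u_{t_0},v_{t_0}) \;\geq\; \inf_{\mathcal{P}_{(a,b)}}J \;=\; C_{(a,b)},
\]
and taking the infimum over $(u,v)$ proves one direction. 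Conversely, any $(w_1,w_2) \in \mathcal{P}_{(a,b)}$ is itself of the form $((w_1)_{t},(w_2)_t)|_{t=1}$ with $t_{(w_1,w_2)}=1$, so $J(w_1,w_2)=\max_{t>0}J((w_1)_t,(w_2)_t)$ lies above the inf-sup on the right. The two inequalities together give the claimed equality.

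For strict positivity, I would combine Lemma \ref{lemma4.4} with Lemma \ref{lemma4.6}. Indeed, every $(u,v) \in \mathcal{P}_{(a,b)}$ satisfies $P(u,v)=0$, so Lemma \ref{lemma4.4} yields a constant $C_0>0$ (depending only on $p,q,r_1,r_2$) such that
\[
J(u,v) \;\geq\; C_0\Bigl[\int_{\R^2}\!\bigl(|\partial_x u|^2+|(-\Delta)^{s/2}_y u|^2\bigr)\dxy+\int_{\R^2}\!\bigl(|\partial_x v|^2+|(-\Delta)^{s/2}_y v|^2\bigr)\dxy\Bigr].
\]
Lemma \ref{lemma4.6} then bounds the bracketed quantity uniformly from below by $\delta_{(a,b)}>0$ on all of $\mathcal{P}_{(a,b)}$. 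Hence $J(u,v)\geq C_0\delta_{(a,b)}$ on $\mathcal{P}_{(a,b)}$, and taking the infimum gives $C_{(a,b)}\geq C_0\delta_{(a,b)}>0$.

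I do not expect any real obstacle here, since the two key ingredients (the fiber-map characterization and the uniform lower bound on gradient energy on $\mathcal{P}_{(a,b)}$) are already in hand. The only point requiring slight care is the first direction of the identity: one must check that the unique critical point $t_{(u,v)}$ produced by Corollary \ref{corollary4.3} is genuinely the maximizer of $\phi_{(u,v)}$, not merely a critical point. This is immediate from the sign information $\phi'_{(u,v)}(t)>0$ for $t<t_{(u,v)}$ and $\phi'_{(u,v)}(t)<0$ for $t>t_{(u,v)}$, which was established in the proof of that corollary using $\tfrac{2(1+3s)}{1+s}<p,q,r_1+r_2<2_s$.
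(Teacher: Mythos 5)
Your proposal is correct and follows essentially the same route as the paper: the strict positivity is obtained exactly as in the paper by chaining Lemma \ref{lemma4.4} with Lemma \ref{lemma4.6} to get $C_{(a,b)}\geq C_0\delta_{(a,b)}>0$, and the inf--max identity is the paper's appeal to Corollary \ref{corollary4.3}, which you merely spell out as two inequalities. The extra care you take in checking that $t_{(u,v)}$ is a genuine maximizer (via the sign of $\phi'_{(u,v)}$) is exactly the content the paper leaves implicit, so nothing is missing.
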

\begin{proof}
For any $(u,v)\in \mathcal{P}_{(a,b)}$, by Lemma \ref{lemma4.4} and Lemma \ref{lemma4.6}, we have that
$$J(u,v)\geq C_0\Big[{\int_{\mathbb{R}^{2}} \big(|\partial_{x}u|^2 +  |(-\Delta)_{y}^{s /2} u|^2 \big)\,\mathrm{d}x \mathrm{d}y +\int_{\mathbb{R}^{2}} \big(|\partial_{x}v|^2 +  |(-\Delta)_{y}^{s /2} v|^2  \big)\,\mathrm{d}x \mathrm{d}y   }\Big]\geq C_0\delta_{(a,b)}>0.$$
Hence, $C_{(a,b)}$ is well defined and $C_{(a,b)}\geq C_0\delta_{(a,b)}>0$. Furthermore, by {Corollary \ref{corollary4.3}}, one can see that
 $$\inf_{\mathcal{P}_{(a,b)}}J(u,v)=\inf_{(0,0)\neq (u,v)\in D_a\times D_b}\max_{t>0} J(u_t,v_t).$$
 This finishes the proof.
\end{proof}
In conclusion of this section, we demonstrate that every critical point of $J$ restricted to $\mathcal{P}_{(a,b)}$ is a critical point of $J$ restricted to $D_a \times D_b$. This suggests, specifically, that a ground state for problem \eqref{eq:20220902-maine1} is a minimizer of $C_{(a,b)}$.
\begin{lemma}\label{lemma4.8}
For any $(u,v)\in \mathcal{P}$ which is a critical point of $J\big|_{\mathcal{P}_{(a,b)}}$ satisfying $\phi''_{(u,v)}(1)\neq 0$, then there exist  $\lambda_1,\lambda_2\in \R$ such that
\begin{equation*}
J'(u,v)+\lambda_1(u,0)+\lambda_2(0,v)=0.
\end{equation*}
\end{lemma}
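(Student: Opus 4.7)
The strategy is to apply a Lagrange/KKT principle at $(u,v)$ for the constraint set $\mathcal{P}_{(a,b)}=\mathcal{P}\cap(D_a\times D_b)$, to obtain real multipliers $\mu$ for the equality $P=0$ and $\lambda_1,\lambda_2$ for the two mass bounds, and then to kill $\mu$ by testing the resulting Euler-Lagrange identity against the infinitesimal scaling field
$$(\xi_u,\xi_v):=\frac{d}{dt}(u_t,v_t)\Big|_{t=1}.$$
The hypothesis $\phi''_{(u,v)}(1)\neq 0$ is precisely what will make this cancellation possible.

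First I would verify that $\mathcal{P}$ is nondegenerate at $(u,v)$. Differentiating the identity $P(u_t,v_t)=t\,\phi'_{(u,v)}(t)$ at $t=1$ and using $\phi'_{(u,v)}(1)=P(u,v)=0$, I obtain
$$\langle P'(u,v),(\xi_u,\xi_v)\rangle=\phi''_{(u,v)}(1)\neq 0,$$
so in particular $P'(u,v)\neq 0$ and $\mathcal{P}$ is a $C^1$ codimension-one submanifold of $\mathbb{D}$ near $(u,v)$. The standard Lagrange/KKT theorem will then supply $\mu,\lambda_1,\lambda_2\in\mathbb{R}$ with
$$J'(u,v)-\mu\,P'(u,v)+\lambda_1(u,0)+\lambda_2(0,v)=0\quad\text{in }\mathbb{D}^*,$$
where each $\lambda_i$ vanishes automatically if the corresponding mass bound is strict at $(u,v)$.

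Next I would pair this identity with $(\xi_u,\xi_v)$. Since the fiber map $t\mapsto u_t$ preserves $\|u\|_2^2$,
$$\langle(u,0),(\xi_u,\xi_v)\rangle=\tfrac{1}{2}\tfrac{d}{dt}\|u_t\|_2^2\big|_{t=1}=0,$$
and likewise $\langle(0,v),(\xi_u,\xi_v)\rangle=0$. Moreover $\langle J'(u,v),(\xi_u,\xi_v)\rangle=\phi'_{(u,v)}(1)=0$. Combining these with the displayed formula for $\langle P'(u,v),(\xi_u,\xi_v)\rangle$, the pairing collapses to $\mu\,\phi''_{(u,v)}(1)=0$, and the hypothesis $\phi''_{(u,v)}(1)\neq 0$ forces $\mu=0$. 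This reduces the multiplier identity to the claimed $J'(u,v)+\lambda_1(u,0)+\lambda_2(0,v)=0$.

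The main technical obstacle will be justifying the Lagrange step when the mass inequalities may be active, which formally requires a constraint-qualification check on the joint linear independence of $P'(u,v)$, $(u,0)$, $(0,v)$. My plan is to split into cases: if both mass bounds are strict at $(u,v)$, only $P=0$ is active and the classical Lagrange multiplier principle applies directly on the smooth manifold $\mathcal{P}$; if some mass bound is saturated, I would restrict to the smooth submanifold $\mathcal{P}\cap(S_a\times S_b)$ (or the mixed variant with only one $S$ constraint) on which all remaining constraints are equalities, and use classical Lagrange multipliers there. Once the multiplier structure is in place, the scaling test above is routine and delivers the conclusion.
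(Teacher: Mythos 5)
Your proposal is correct and follows essentially the same route as the paper: introduce a multiplier $\mu$ for the Pohozaev constraint alongside $\lambda_1,\lambda_2$, then exploit the scaling fiber $t\mapsto(u_t,v_t)$ — whose pairing with $P'(u,v)$ equals $\phi''_{(u,v)}(1)$ and with $J'(u,v)$, $(u,0)$, $(0,v)$ vanishes — to conclude $\mu\,\phi''_{(u,v)}(1)=0$ and hence $\mu=0$. The paper phrases this as differentiating $t\mapsto\psi(u_t,v_t)$ at $t=1$ rather than pairing with the tangent vector, which is the same computation; your added attention to the constraint-qualification/active-inequality step is more careful than the paper, which simply asserts the existence of the multipliers.
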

\begin{proof}
First of all, it is evident that there is $\lambda_1,\lambda_2,\mu\in \R$ such that
\begin{equation}\label{4.7}
J'(u,v)+\lambda_1(u,0)+\lambda_2(0,v)+\mu P'(u,v)=0~\hbox{in $\mathbb{D}^{-1}$, the dual space of}~\mathbb{D}.
\end{equation}
Thus, all we have to do is demonstrate that $\mu=0$. Let us demonstrate the following functional using \eqref{4.7} as
\begin{equation*}
%\lab{eq:20210623-xe2}
\psi(u,v):=J(u,v)+\frac{1}{2}\int_{\mathbb{R}^{2}}\lambda_1|u|^2+\frac{1}{2}\int_{\mathbb{R}^{2}}\lambda_2|v|^2+\mu P(u,v).
\end{equation*}
Given $(u,v)$ solves \eqref{4.7}, it follows that $t=1$ is a critical point of $\tilde{\phi}_{(u,v)}:\R^+\rightarrow \R$ defined by
\begin{align*}
%\lab{eq:20221001-xe1}
\tilde{\phi}_{(u,v)}(t) := \psi(u_t,v_t)
%=&J(u_t,v_t)+\frac{1}{2}\int_{\mathbb{R}^{2}}\lambda_1|u|^2+\lambda_2|v|^2+\mu P(u_t,v_t)\nonumber\\
=\phi_{(u,v)}(t)+\frac{1}{2}\int_{\mathbb{R}^{2}}\lambda_1|u|^2+\lambda_2|v|^2+\mu t\phi'_{(u,v)}(t).
\end{align*}
Now, we have
\begin{equation*}
%\lab{eq:20210623-xe3}
\tilde{\phi}'_{(u,v)}(t) = \frac{d}{dt}\psi(u_t,v_t)=(1+\mu)\phi'_{(u,v)}(t)+\mu \phi''_{(u,v)}(t).
\end{equation*}
We obtain that $\mu \phi''_{(u,v)}(1)=0$ by substituting $t=1$ and $\phi'_{(u,v)}(1)=P(u,v)=0$. This says that $\phi''_{(u,v)}(1)\neq 0$ implies $\mu=0$.
\end{proof}
\begin{lemma}\label{lemma4.9}
For any $(u,v)\in \mathcal{P}$, we have $\phi''_{(u,v)}(1)<0$.
\end{lemma}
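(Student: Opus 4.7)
The plan is a direct computation. Starting from the formula \eqref{4.2} for $\phi'_{(u,v)}(t)$, I would differentiate once more to obtain
\[
\phi''_{(u,v)}(1)=s(2s-1)A-\alpha_p(\alpha_p-1)\frac{\mu_1}{p}\|u\|_p^p-\alpha_q(\alpha_q-1)\frac{\mu_2}{q}\|v\|_q^q-\alpha_r(\alpha_r-1)\beta\int_{\R^2}|u|^{r_1}|v|^{r_2}\,\dxy,
\]
where $A:=\int_{\R^2}(|\partial_xu|^2+|(-\Delta)^{s/2}_yu|^2+|\partial_xv|^2+|(-\Delta)^{s/2}_yv|^2)\,\dxy$, and $\alpha_p:=\tfrac{(1+s)(p-2)}{2}$, $\alpha_q:=\tfrac{(1+s)(q-2)}{2}$, $\alpha_r:=\tfrac{(1+s)(r_1+r_2-2)}{2}$. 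The constraint $P(u,v)=0$ says precisely $sA=\alpha_p\tfrac{\mu_1}{p}\|u\|_p^p+\alpha_q\tfrac{\mu_2}{q}\|v\|_q^q+\alpha_r\beta\int |u|^{r_1}|v|^{r_2}\,\dxy$.

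The next step is to use this identity to eliminate $A$: multiplying it by $(2s-1)$ and substituting produces cancellation of one factor of $(\alpha_i-1)$ in each nonlinear term, leaving
\[
\phi''_{(u,v)}(1)=\alpha_p(2s-\alpha_p)\frac{\mu_1}{p}\|u\|_p^p+\alpha_q(2s-\alpha_q)\frac{\mu_2}{q}\|v\|_q^q+\alpha_r(2s-\alpha_r)\beta\int_{\R^2}|u|^{r_1}|v|^{r_2}\,\dxy.
\]
Now the subcritical/supercritical threshold enters: the inequality $p>\tfrac{2(1+3s)}{1+s}$ is equivalent to $(1+s)(p-2)>4s$, i.e.\ $2s-\alpha_p<0$, and the analogous statements hold for $q$ and $r_1+r_2$. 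Each $\alpha_i$ is strictly positive since $p,q,r_1+r_2>2$, so every coefficient in the displayed expression is non-positive, giving $\phi''_{(u,v)}(1)\le 0$.

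For the strict inequality I would argue as follows. Since $(u,v)\in\mathcal{P}$ is nontrivial and $u,v\in L^2(\R^2)$, one cannot have $A=0$ (that would force $u,v$ to be $L^2$-constants and hence zero). With $A>0$, the Pohozaev relation $sA=\alpha_p\tfrac{\mu_1}{p}\|u\|_p^p+\alpha_q\tfrac{\mu_2}{q}\|v\|_q^q+\alpha_r\beta\int|u|^{r_1}|v|^{r_2}\,\dxy$ forces at least one of $\|u\|_p^p$, $\|v\|_q^q$, $\int|u|^{r_1}|v|^{r_2}\,\dxy$ to be strictly positive, and the corresponding $(2s-\alpha_i)$-term contributes a strictly negative quantity. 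Hence $\phi''_{(u,v)}(1)<0$.

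There is really no obstacle here once one records $\phi''_{(u,v)}(1)$ carefully and substitutes the Pohozaev constraint; the entire argument is algebraic and the only place the hypotheses $\tfrac{2(1+3s)}{1+s}<p,q,r_1+r_2$ are used is to guarantee $2s-\alpha_i<0$ for each $i$. The only mildly delicate point is ruling out the degenerate case $A=0$ when checking strictness, which follows from $(u,v)\ne(0,0)$ together with the $L^2$-integrability built into $\mathcal{H}^{1,s}(\R^2)$.
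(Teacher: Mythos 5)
Your proposal is correct and follows essentially the same route as the paper: compute $\phi''_{(u,v)}(1)$ directly, use $P(u,v)=0$ to eliminate the gradient term $A$, and observe that each resulting coefficient $\alpha_i(2s-\alpha_i)$ is negative precisely because $p,q,r_1+r_2>\tfrac{2(1+3s)}{1+s}$ forces $\alpha_i>2s$. Your explicit treatment of strictness (ruling out $A=0$ so that at least one nonlinear integral is positive) is in fact a welcome elaboration of a step the paper leaves implicit.
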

\begin{proof}
Through direct calculation, we obtain that
%\begin{align*}
%\lab{eq:bu-20210623-xbe1}
%\phi''_{(u,v)}(t)=&{s(2s-1)}t^{(2s-2)}\Big[{\int_{\mathbb{R}^{2}} \big(|\partial_{x}u|^2 +  |(-\Delta)_{y}^{s /2} u|^2 \big)\,\mathrm{d}x \mathrm{d}y +\int_{\mathbb{R}^{2}} \big(|\partial_{x}v|^2 +  |(-\Delta)_{y}^{s /2} v|^2  \big)\,\mathrm{d}x \mathrm{d}y   } \Big]\\
%&
%-\frac{(p-2)(1+s)}{2p}\frac{(p-2)(1+s)-2}{2}t^{\frac{(1+s)(p-2)-4}{2}}\mu_1  
% \int_{\mathbb{R}^2}  |u|^p  \mathrm{d}x \mathrm{d}y                         \nonumber\\
%&-\frac{(q-2)(1+s)}{2q}\frac{(q-2)(1+s)-2}{2}t^{\frac{(1+s)(q-2)-4}{2}}\mu_2  
% \int_{\mathbb{R}^2}  |u|^q  \mathrm{d}x \mathrm{d}y \nonumber\\
%&-\frac{(r_1+r_2-2)(1+s)}{2}\frac{(r_1+r_2-2)(1+s)-2}{2} t^{\frac{(r_1+r_2-2)(1+s)-4}{2}}\beta \Big( \int_{\R^2} |u|^{r_1} |v|^{r_2} \mathrm{d}x  \mathrm{d}y\Big).
%\end{align*}
\begin{align*}
%\lab{eq:bu-20210623-xbe2}
\phi''_{(u,v)}(1)=& {s(2s-1)}\Big[{\int_{\mathbb{R}^{2}} \big(|\partial_{x}u|^2 +  |(-\Delta)_{y}^{s /2} u|^2 \big)\,\mathrm{d}x \mathrm{d}y +\int_{\mathbb{R}^{2}} \big(|\partial_{x}v|^2 +  |(-\Delta)_{y}^{s /2} v|^2  \big)\,\mathrm{d}x \mathrm{d}y   } \Big]\\
&
-\frac{(p-2)(1+s)}{2p}\frac{(p-2)(1+s)-2}{2}\mu_1  
 \int_{\mathbb{R}^2}  |u|^p  \mathrm{d}x \mathrm{d}y                         \nonumber\\
&-\frac{(q-2)(1+s)}{2q}\frac{(q-2)(1+s)-2}{2}\mu_2  
 \int_{\mathbb{R}^2}  |u|^q  \mathrm{d}x \mathrm{d}y \nonumber\\
&-\frac{(r_1+r_2-2)(1+s)}{2}\frac{(r_1+r_2-2)(1+s)-2}{2}\beta \Big( \int_{\R^2} |u|^{r_1} |v|^{r_2} \mathrm{d}x  \mathrm{d}y\Big)         
\end{align*}
In contrast, we have for any $(u,v)\in \mathcal{P}$,
\begin{align*}
%\lab{eq:bu-20210623-xbe3}
 &s\int_{\mathbb{R}^{2}} \big(|\partial_{x}u|^2 +  |(-\Delta)_{y}^{s /2} u|^2  \big)\,\mathrm{d}x \mathrm{d}y 
  + s\int_{\mathbb{R}^{2}} \big(|\partial_{x}v|^2 +  |(-\Delta)_{y}^{s /2} v|^2 \big)\,\mathrm{d}x \mathrm{d}y \\
  &\quad= \frac{(1+s)(p-2)}{2p}\mu _1\int_{\mathbb{R}^{2}} |u|^{p}\, \mathrm{d}x \mathrm{d}y    +\frac{(1+s)(q-2)}{2q}\mu _2\int_{\mathbb{R}^{2}} |v|^{q}\, \mathrm{d}x \mathrm{d}y  +{\frac{(1+s)(r_1+r_2-2)}{2}} \beta \int_{\mathbb{R}^{2}}|u|^{r_1}|v|^{r_2}\, \mathrm{d}x \mathrm{d}y.
\end{align*}
Using this, we can rewrite
\begin{align*}
%\lab{eq:bu-20210623-xbe4}
\phi''_{(u,v)}(1)=&-\left[(p-2)(1+s)-4s\right] \frac{(p-2)(1+s)}{4p}\mu_1\int_{\mathbb{R}^2}|u|^p\mathrm{d}x \mathrm{d}y\nonumber\\
&\quad-\left[(q-2)(1+s)-4s\right]\frac{(q-2)(1+s)}{4q}\mu_2\int_{\mathbb{R}^2}|u|^q\mathrm{d}x \mathrm{d}y \nonumber\\
&\quad \quad-\left[(r_1+r_2-2)(1+s)-4s\right]\frac{(r_1+r_2-2)(1+s)}{4}\beta\int_{\R^2}|u|^{r_1}|v|^{r_2}\mathrm{d}x  \mathrm{d}y.
\end{align*}
Imposing the conditions  $\frac{2(1+3s)}{1+s}<p,q,r_1+r_2<2_s=\frac{2(1+s)}{1-s}$ and $(u,v)\neq (0,0)$ in above, we conclude  that $\phi''_{(u,v)}(1)<0$.
\end{proof}
Assume that $(u^*, v^*)$ and $(u^\#, v^\#)$ is the Steiner symmetrization of $u$ and $v$ for any $(u,v)\in \mathcal{P}_{(a,b)}$ w.r.t. $x$ and $y$, respectively. Notice that $(u^*)^\#=(u^\#)^*$ is a function symmetric to both $x$ and $y$. For a detailed study on Steiner symmetrization, we refer \cite{shibata2017new}. Following is an immediate outcome.
\begin{lemma}\label{lemma5.1}
Let  $\frac{2(1+3s)}{1+s}<p,q,r_1+r_2<2_s=\frac{2(1+s)}{1-s}. $
For any $(u,v)\in \mathcal{P}_{(a,b)}$, there exists a unique $t_0=t_{(u^*,v^*)}\in (0,1]$ such that $((u_t)^*,  (v_t)^*)\in \mathcal{P}_{(a,b)}$ and  $J((u_t)^*,  (v_t)^*)\leq J(u,v) $ $w.r.t.$ x. A similar result holds for the symmetrization $w.r.t.$ y.
\end{lemma}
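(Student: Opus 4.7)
The plan is to reduce the lemma to an application of Corollary~\ref{corollary4.3} via a collection of one-dimensional rearrangement inequalities applied slice-by-slice in the $x$-variable. Since $u^{*}(\cdot,y)$ is, for each fixed $y$, the symmetric decreasing rearrangement of $u(\cdot,y)$ on $\mathbb{R}$, equimeasurability in every slice preserves all $L^{r}$ norms, so $\|u^{*}\|_{r}=\|u\|_{r}$ and $\|v^{*}\|_{r}=\|v\|_{r}$ for every $r\in[1,\infty]$. In particular $(u^{*},v^{*})\in D_{a}\times D_{b}$, and the $\mu_{1}\|u\|_{p}^{p}/p$ and $\mu_{2}\|v\|_{q}^{q}/q$ contributions to $J$ and $P$ are unchanged. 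The one-dimensional P\'olya--Szeg\H{o} inequality applied in $x$ yields $\|\partial_{x}u^{*}\|_{2}\leq\|\partial_{x}u\|_{2}$ and the analogous bound for $v$. For the anisotropic nonlocal kinetic term, I would write
$$\|(-\Delta)_{y}^{s/2}u\|_{L^{2}(\mathbb{R}^{2})}^{2}=\frac{C_{s}}{2}\int_{\mathbb{R}^{3}}\frac{|u(x,y)-u(x,z)|^{2}}{|y-z|^{1+2s}}\,\mathrm{d}x\,\mathrm{d}y\,\mathrm{d}z,$$
expand the square, and note that the $u(x,y)^{2}+u(x,z)^{2}$ piece is invariant under $x$-rearrangement while, at each fixed pair $(y,z)$, the cross integrand $\int_{\mathbb{R}}u(x,y)u(x,z)\,\mathrm{d}x$ is only made larger by the one-dimensional Riesz rearrangement inequality. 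Hence $\|(-\Delta)_{y}^{s/2}u^{*}\|_{2}\leq\|(-\Delta)_{y}^{s/2}u\|_{2}$ and similarly for $v$. Finally, the one-dimensional Hardy--Littlewood inequality gives, slice-wise in $y$, $\int_{\mathbb{R}}|u^{*}|^{r_{1}}|v^{*}|^{r_{2}}\,\mathrm{d}x\geq\int_{\mathbb{R}}|u|^{r_{1}}|v|^{r_{2}}\,\mathrm{d}x$, so after integrating in $y$ the coupling functional increases under joint $x$-symmetrization.

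Combining these inequalities with the explicit forms \eqref{energy functional} and \eqref{2.8} yields
$$J(u^{*},v^{*})\leq J(u,v)\qquad\text{and}\qquad P(u^{*},v^{*})\leq P(u,v)=0.$$
Since $(u^{*},v^{*})\in(D_{a}\times D_{b})\setminus\{(0,0)\}$, Corollary~\ref{corollary4.3} produces a unique $t_{0}:=t_{(u^{*},v^{*})}>0$ with $((u^{*})_{t_{0}},(v^{*})_{t_{0}})\in\mathcal{P}_{(a,b)}$, and the sign information $P(u^{*},v^{*})\leq 0$ together with that corollary forces $t_{0}\in(0,1]$. The fiber map \eqref{eq:ut-invariant} is an anisotropic dilation of the argument followed by multiplication by a positive constant, and both operations commute with $x$-symmetrization, so $(w_{t})^{*}=(w^{*})_{t}$ for all $w\in\mathcal{H}^{1,s}(\mathbb{R}^{2})$ and $t>0$. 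Applying the rearrangement inequalities of the previous paragraph to the pair $(u_{t_{0}},v_{t_{0}})$ therefore gives
$$J\bigl((u^{*})_{t_{0}},(v^{*})_{t_{0}}\bigr)=J\bigl((u_{t_{0}})^{*},(v_{t_{0}})^{*}\bigr)\leq J(u_{t_{0}},v_{t_{0}})\leq\max_{t>0}J(u_{t},v_{t})=J(u,v),$$
where the last equality uses $(u,v)\in\mathcal{P}_{(a,b)}$ and the identification of $t=1$ as the unique maximizer of $\phi_{(u,v)}$ in the supercritical regime $\tfrac{2(1+3s)}{1+s}<p,q,r_{1}+r_{2}<2_{s}$ provided by Corollary~\ref{corollary4.3}. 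The statement for Steiner symmetrization in $y$ follows from the symmetric argument, with the roles of $\|\partial_{x}\cdot\|_{2}$ and $\|(-\Delta)_{y}^{s/2}\cdot\|_{2}$ interchanged: the latter is then controlled by one-dimensional P\'olya--Szeg\H{o} in $y$, and the former by the bilinear/Riesz expansion carried out in the $x$-variable.

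I expect the main obstacle to be precisely the estimate on the anisotropic nonlocal kinetic term under $x$-symmetrization: the operator $(-\Delta)_{y}^{s/2}$ couples values of $u$ at different $y$-slices whereas the rearrangement acts in $x$, so one cannot directly invoke P\'olya--Szeg\H{o} in a single direction. The bilinear reformulation above is what makes the argument work by reducing the nonlocal seminorm, at each pair $(y,z)$, to a one-dimensional Riesz-type integral in $x$; this is the only genuinely non-routine ingredient, everything else following from the variational structure of $\phi_{(u,v)}$ already established in Corollaries~\ref{corollary4.3}--\ref{corollary4.5}.
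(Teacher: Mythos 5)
Your proof is correct and follows essentially the same route as the paper: establish $J(u^*,v^*)\leq J(u,v)$ and $P(u^*,v^*)\leq P(u,v)=0$, invoke Corollary~\ref{corollary4.3} to get the unique $t_0\in(0,1]$, and conclude via the commutation $(u_t)^*=(u^*)_t$ together with the max characterization of $\phi_{(u,v)}$. The only difference is that the paper outsources the rearrangement inequalities to \cite[Proposition 3.1]{gou2023solitary}, whereas you prove them directly (P\'olya--Szeg\H{o} in $x$, the bilinear Riesz/Hardy--Littlewood expansion for the anisotropic seminorm, and Hardy--Littlewood for the coupling term), which is a welcome, self-contained filling-in of the same step.
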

\begin{proof}
For any $(u,v)\in \mathcal{P}_{(a,b)}$, we have $(u,v)\neq (0,0)$. So by $\|u^*\|_2^2=\|u\|_2^2$ and $\|v^*\|_2^2=\|v\|_2^2$, we see that $(u^*,v^*)\in (D_a\times D_b)\backslash \{(0,0)\}$. Then by Corollary \ref{corollary4.3}, there exists a unique
$t_0=t_{(u^*,v^*)}>0$ such that $( u^*_t, v^*_t)\in \mathcal{P}_{(a,b)}$.

By applying the result from \cite[Proposition 3.1]{gou2023solitary}, we obtain that $P(u^*, v^*) \leq P(u, v) = 0$. Consequently, by Corollary \ref{corollary4.3}, it follows that $t_{(u^*, v^*)} \leq 1$. Moreover, as noted, we also have $J(u^*, v^*) \leq J(u, v)$ for every $(u, v) \neq (0,0)$, again using the result from \cite[Proposition 3.1]{gou2023solitary}. Finally, combining these observations with the fact that
\[
(u^*)_k = (u_k)^*, \quad \text{for all } k \in \mathbb{R}^+,
\]
where $u_k(x, y) = k^{\frac{1+s}{2}} u(k^s x, k y)$, We finally obtain
\begin{align*}
\max_{k>0}J((u^*)_k, (v^*)_k)=J((u^*)_t,  (v^*)_t)=J(( u_t)^*, (v_t)^*)
\leq J( u_t, v_t )
\leq\max_{k>0}J( u_k,  v_k)
=J(u,v).
\end{align*}
This finishes the proof.
\end{proof}
\begin{remark}\label{remark5.2}
Specifically, Lemma \ref{lemma5.1} suggests that a pair $(u,v) \in \mathbb{D}$ of Steiner symmetric functions would attain $C_{(a,b)}$.
\end{remark}
\section{Existence of a special (PS)-sequence }\label{sec:Palais_Smale}
\renewcommand{\theequation}{4.\arabic{equation}}
In this segment, we establish the existence of a special Palais-Smale sequence for $J$ restricted to $D_a\times D_b$ at level $C_{(a,b)}$. 
For this, we first define the class of paths
\begin{equation*}
%\label{Class_paths}
\Gamma_{(a,b)} = \{g \in C([0,1], D_a\times D_b) : g(0) \in L_{(a,b)} \,\,  \mbox{and} \,\, P(g(1)) <0 \}
\end{equation*}
where $$L_{(a,b)} := \left\{ (u,v) \in \mathbb{D} : \Big[{\int_{\mathbb{R}^{2}} \big(|\partial_{x}u|^2 +  |(-\Delta)_{y}^{s /2} u|^2 \big)\,\mathrm{d}x \mathrm{d}y +\int_{\mathbb{R}^{2}} \big(|\partial_{x}v|^2 +  |(-\Delta)_{y}^{s /2} v|^2  \big)\,\mathrm{d}x \mathrm{d}y   } \Big]\leq \frac{1}{2}\delta_{(a,b)}\right\}.$$
Observe that $\emptyset \neq \Gamma_{(a,b)}$. In fact, $L_{(a,b)} \neq \emptyset$, and we know that the set where $P<0$ is non-void from Corollary \ref{corollary4.3} since the map $(u,v)\to (u_t,v_t)$ is the continuous and one-one map. 
Subsequently, we define
\begin{equation*}
%\label{Mp_level}
\gamma_{(a,b)} = \inf_{g \in \Gamma_{(a,b)}} \max_{t \in [0,1]} J(g(t)).
\end{equation*}
and we claim that $$\gamma_{(a,b)} = C_{(a,b)}.$$We can see that $P >0$ on $L_{(a,b)}$ as a consequences of combining Corollary \ref{corollary4.3} with Lemma \ref{lemma4.6}. Consequently, due to continuity of $g$, there exists a $t \in [0,1]$ such that $P(g(t)) =0$ for any $g \in \Gamma_{(a,b)}$. It suggests that $C_{(a,b)} \leq \gamma_{(a,b)}$. On the other hand, Corollary \ref{corollary4.3} yields the converse inequality which is $\gamma_{(a,b)} \leq C_{(a,b)}$.  
The equality $\gamma_{(a,b)} = C_{(a,b)}$ leads us to the conclusion that 
$$ \inf_{(u,v) \in \mathcal{P}_{(a,b)} } J(u,v) \geq \gamma_{(a,b)}.$$
We note that we can now apply \cite[Theorem 4.1]{MR1251958} which asserts that for any collection of pathways $\{ g_n \} \subset \Gamma_{(a,b)}$ such that  as  $n \rightarrow \infty$,
\begin{equation}\label{mimpaths}
\max_{[0,1]} J(g_n(t)) \rightarrow C_{(a,b)},
\end{equation}
 there exists a sequence $\{(u_n,v_n) \} \subset \mathbb{D}$ such that as $n \rightarrow \infty$,
\begin{itemize}
\item[(i)] $J(u_n,v_n) \rightarrow C_{(a,b)}$,
\item[(ii)] $ J'(u_n,v_n)\Big|_{D_a \times D_b}\rightarrow 0$,
\item[(iii)] $dist((u_n,v_n), \mathcal{P}_{(a,b)}) \rightarrow 0$, 
\item[(iv)] $dist((u_n,v_n), g_n[0,1]) \rightarrow 0.$
\end{itemize}
Due to Lemma \ref{lemma5.1}, let $((u_n^*)^\#, (v_n^*)^\#) \subset \mathcal{P}_{(a,b)}$ be a minimizing sequence for $C_{(a,b)}$. Now, let $\{g_n\} \subset \Gamma_{(a,b)}$ be the sequence of pathways as follows:
$$g_n(t) = (  (u_n^*)_{k_1^n + t k_2^n}^\#, (v_n^*)_{k_1^n + t k_2^n}^\# )$$
where $(u_n^*)_{k_1^n + t k_2^n}^\#=(k_1^n+tk_2^n)^{\frac{1+s}{2}}(u_n^*)^\#((k_1^n+tk_2^n)^sx,(k_1^n+tk_2^n)y)$, $k_1 >0$ sufficiently tiny and $k_2 >0$ sufficiently large. Hence $g_n(0)\in L_{(a,b)}$ and $P(g_n(1))<0 $ which states that $g_n \in \Gamma_{(a,b)}$. Now by using the Corollary \ref{corollary4.7} we can say that sequence $\{g_n\}$ satisfies \eqref{mimpaths} i.e. 
\begin{equation*}
\max_{[0,1]} J(g_n(t)) \rightarrow C_{(a,b)}.
\end{equation*}
 \smallskip
By the help of (i)-(ii) we can see that $\{(u_n,v_n) \} \subset \mathbb{D}$ is a Palais-Smale sequence for $J$ restricted to $D_a \times D_b$ at the level $C_{(a,b)}$. The property (iii), combined with the fact that $J$ is coercive on $\mathcal{P}_{(a,b)}$(see
Corollary \ref{corollary4.5}) and that $J'$ and $P'$ take bounded sets into bounded set implies that $\{(u_n,v_n) \} \subset \mathbb{D}$ is bounded and that $P(u_n,v_n) \rightarrow 0$.  Now by compactness of the 
embedding, for each $n \in \mathbb{N}$, there exists $k_n \in ({k_1^n}, {k_1^n}+{k_2^n})$  such that
\begin{equation}\label{distance_to_paths}
\text{dist} \Big((u_n,v_n), g_n([0,1])\Big) = \Big|\Big| (u_n,v_n) - ( (u_n^*)_{{k_n}}^\# , (v_n^*)_{{k_n}}^\#) \Big|\Big|.
\end{equation}
Up to a subsequence we can assume that $((u_n^*)_{k_n}^\#, (v_n^*)_{k_n}^\#) \rightharpoonup (u,v)$ weakly in $\mathbb{D}$ and
$((u_n^*)_{k_n}^\#, (v_n^*)_{k_n}^\#) \rightarrow (u,v) $
 strongly in $L^{\eta}(\R^2) \times L^{\eta}(\R^2)$ for all $2 < \eta < 2_{s}$. The fact that the functions are radial leads to this strong convergence. The Palais-Smale sequence $\{(u_n,v_n) \} \subset \mathbb{D}$ exhibits convergence properties as a result of \eqref{distance_to_paths}.
 
We have, after summarizing, a bounded sequence $\{(u_n,v_n) \} \subset D_a\times D_b$, such that  
\begin{itemize}
\item[(i)] $J(u_n,v_n)\rightarrow C_{(a,b)}$,
\item[(ii)] $ J'(u_n,v_n) + \lambda_{1,n}(u_n,0) + \lambda_{2,n}(0,v_n) \to 0, \quad  \mbox{in } \mathbb{D}^{-1}$ for some real sequences $\{\lambda_{1,n}\}$ and $\{\lambda_{2,n}\}$,
\item[(iii)] $ P(u_n,v_n) \rightarrow 0$,
\item[(iv)] $(u_n,v_n) \rightharpoonup (u,v) $   weakly in  $\mathbb{D}$ and  $(u_n,v_n) \rightarrow (u,v) $ strongly in  $L^{\eta}(\R^2) \times L^{\eta}(\R^2)$ for all $2 < \eta < 2_{s}$. Additionally $u \geq 0$ and $v\geq 0$.
\end{itemize}

\subsection{Estimation of $C_{(a,b)}$:}\label{sec:estimation}
\renewcommand{\theequation}{6.\arabic{equation}}
%%%%%%%%%%%%%%%%%%%%%
With the help of $z_{p,\mu,a}$  defined in \eqref{2.5} and $\beta_{p,\mu,a,r}$ defined in \eqref{def:-best-const}, we can observe that
\begin{equation*}
\beta_{p,\mu,a,r}=\frac{1}{2}\lambda^{\frac{p-2-r}{p-2}} \mu^{\frac{r}{p-2}}
\inf_{h\in \mathcal{H}^{1,s}(\R^2)\backslash\{0\}}\frac{\Big[{\int_{\mathbb{R}^{2}} \big(|\partial_{x}h|^2 +  |(-\Delta)_{y}^{s /2} h|^2 \big)\,\mathrm{d}x \mathrm{d}y }\Big]}{\int_{\R^2}u_p^r h^2 \mathrm{d}x\mathrm{d}y}.
\end{equation*}
 Now substituting the value of $\lambda$ (see \eqref{2.4}), we obtain that
\begin{equation*}
\label{eq:20221212-be1}
\beta_{p,\mu,a,r}=\frac{1}{2}\mu^{\frac{r(1+s)-4s}{(1+s)(p-2)-4s}} \,
\|u\|_{2}^{\frac{4s(p-2-r)}{(1+s)(p-2)-4s}} a^{-\frac{2s(p-2-r)}{(1+s)(p-2)-4s}} \inf_{h\in H^{1,s}(\R^2)\backslash\{0\}}\frac{\Big[{\int_{\mathbb{R}^{2}} \big(|\partial_{x}h|^2 +  |(-\Delta)_{y}^{s /2} h|^2 \big)\,\mathrm{d}x \mathrm{d}y }\Big]} {\int_{\R^2}u_p^r h^2 \mathrm{d}x\mathrm{d}y}.
\end{equation*}
\begin{lemma}\label{lemma7.1}
For $p\in (2,+\infty), \mu, a, r>0$, it holds that
$\beta_{p,\mu,a,r}=0$.
\end{lemma}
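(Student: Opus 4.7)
To establish $\beta_{p,\mu,a,r}=0$, the plan is to exhibit a sequence $\{h_n\}\subset \mathcal{H}^{1,s}(\R^2)\setminus\{0\}$ along which the Rayleigh quotient in the definition of $\beta_{p,\mu,a,r}$ tends to zero. The guiding intuition is that the operator $L=-\partial_{xx}+(-\Delta)_y^s$ on $L^2(\R^2)$ has spectrum $[0,+\infty)$, so the unweighted analogue $\inf_{h}\bigl(\|\partial_x h\|_2^2+\|(-\Delta)_y^{s/2}h\|_2^2\bigr)/\|h\|_2^2$ is already zero; what must be verified here is that replacing the $L^2$-denominator by the weighted integral $\int|z_{p,\mu,a}|^r\,h^2\,\dxy$ preserves this vanishing. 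Since $z_{p,\mu,a}$ is bounded, smooth, and strictly positive (being just a rescaled copy of the ground state $u_p$), this is plausible.

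First I would reduce to $u_p$ by a change of variables: using the explicit formula \eqref{2.5}, the anisotropic dilation $(x,y)\mapsto(\lambda^{-1/2}x,\lambda^{-1/(2s)}y)$ transforms the Rayleigh quotient into its analogue with $u_p$ in place of $z_{p,\mu,a}$, up to an explicit positive multiplicative constant. It therefore suffices to prove
\[
\inf_{h\in\mathcal{H}^{1,s}(\R^2)\setminus\{0\}}\frac{\|\partial_x h\|_2^2+\|(-\Delta)_y^{s/2}h\|_2^2}{\int_{\R^2}u_p^{\,r}\,h^2\,\dxy}=0.
\]

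Next I would try the anisotropically dilated family $h_n(x,y):=\phi(x/n^\alpha,\,y/n^\beta)$ for a fixed $\phi\in C_c^\infty(\R^2)$ with $\phi(0,0)=1$ and exponents $\alpha,\beta>0$ to be chosen. A direct calculation gives
\[
\|\partial_x h_n\|_2^2=n^{\beta-\alpha}\|\partial_x\phi\|_2^2,\qquad \|(-\Delta)_y^{s/2}h_n\|_2^2=n^{\alpha+\beta(1-2s)}\|(-\Delta)_y^{s/2}\phi\|_2^2,
\]
while the substitution $(z_1,z_2)=(x/n^\alpha,y/n^\beta)$ in the denominator yields
\[
\int_{\R^2}u_p^{\,r}\,h_n^2\,\dxy=n^{\alpha+\beta}\int_{\R^2}u_p(n^\alpha z_1,\,n^\beta z_2)^{r}\,\phi(z_1,z_2)^2\,\mathrm dz_1\,\mathrm dz_2.
\]
Combining dominated convergence with the known decay of $u_p$ at infinity — exponential in $x$ (from the local $-\partial_{xx}$ term together with the mass term) and polynomial in $y$ (from the fractional Laplacian) — yields an explicit decay rate for the denominator. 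Choosing $(\alpha,\beta)$ so that the numerator decays strictly faster than the denominator completes the argument and gives $\beta_{p,\mu,a,r}=0$.

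The hardest part will be the final balance of exponents. Because the two seminorm terms scale differently under anisotropic dilation, namely as $n^{\beta-\alpha}$ and $n^{\alpha+\beta(1-2s)}$, the requirement that both tend to zero already traps $(\alpha,\beta)$ in a restrictive range when $s<1$; simultaneously forcing the denominator to decay more slowly than the numerator is delicate and depends on exploiting the precise decay of $u_p$ provided by the Pohozaev identity together with the Gagliardo--Nirenberg inequality recorded earlier. If the straightforward two-parameter dilation proves insufficient, I would refine the construction by choosing $\phi$ to vanish at the origin to a suitable order, so that the weighted integral picks up the slowly decaying tail of $u_p^{\,r}$, or by combining the dilation with a translation to a point where the anisotropic decay of $u_p$ can be estimated more sharply.
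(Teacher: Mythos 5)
Your reduction to $u_p$ and the scaling computations are correct, but they in fact show that the dilation strategy cannot work. To make both $n^{\beta-\alpha}$ and $n^{\alpha+\beta(1-2s)}$ tend to zero you need $\alpha>\beta$ and $\alpha<(2s-1)\beta$ simultaneously; since $2s-1<1$ for every $s\in(1/2,1)$, the admissible range of $(\alpha,\beta)$ is empty, not merely ``restrictive.'' So along any family $h_n(x,y)=\phi(x/n^\alpha,y/n^\beta)$ at least one of the two seminorm terms in the numerator is bounded below by a positive constant. The denominator cannot rescue this: by dominated convergence $\int_{\R^2}u_p^{\,r}\,h_n^2\,\dxy\to\phi(0,0)^2\int_{\R^2}u_p^{\,r}\,\dxy$, a finite constant whenever $u_p^{\,r}\in L^1(\R^2)$ (true in particular for $r\geq 1$, which covers the exponents $r_1,r_2>1$ relevant to the paper). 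Hence the Rayleigh quotient along your family stays bounded away from zero. Your proposed repairs make matters worse rather than better: taking $\phi(0,0)=0$ or translating the bump only shrinks the denominator, while the numerator's scaling is unchanged.

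The missing idea is that one must force the \emph{denominator} to blow up rather than the numerator to vanish, and the mechanism for this is the failure of the embedding $\mathcal{H}^{1,s}(\R^2)\hookrightarrow L^\infty(\R^2)$ (the critical exponent $2_s$ is finite). The paper's proof takes symmetrized $\psi_n$ with $\|\psi_n\|_{\mathcal{H}^{1,s}}=1$ but $\psi_n(0)=\|\psi_n\|_\infty\to\infty$, and inserts a plateau of height $\|\psi_n\|_\infty$ over the set $\{|x|\le n\}$ to build $h_n$; the seminorms of $h_n$ remain bounded by $1$, while $\int_{\R^2} u_p^{\,r}h_n^2\,\dxy\geq\|\psi_n\|_\infty^2\int_{|x|\le n}u_p^{\,r}\,\dxy\to\infty$, so the quotient tends to $0$. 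Decay estimates for $u_p$ play no role in that argument, whereas your approach leans on them at exactly the point where they cannot deliver the conclusion.
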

\begin{proof}
 We remark that the embedding $\mathcal{H}^{1,s}(\R^2)\hookrightarrow L^\infty(\R^2)$ does not hold true, so there exists a sequence $\{\psi_n\}\subset \mathcal{H}^{1,s}(\R^2)$ with $\|\psi_n\|_{\mathcal{H}^{1,s}}=1$, but $\|\psi_n\|_\infty\rightarrow \infty$ as $n\rightarrow \infty$. In particular, by the property of rearrangement, without loss of generality, we can assume that $\psi_n=(\psi_n^*)^\#$. So we have that
$$\psi_n(x)=\psi_n(|x|)~\hbox{and}~\psi_n(0)=\|\psi_n\|_\infty.$$
Now, we set
$$h_n(x)=\begin{cases}
\psi_n(0)\quad &\hbox{if}~|x|\leq n,\\
\psi_n(|x|-n)\quad &\hbox{if}~|x|\geq n.
\end{cases}$$
Then $\|\partial_x h_n\|_{2}^{2}+\|(-\Delta)_{y}^{s /2} h_n\|_{2}^{2}=\|\partial_x \psi_n\|_{2}^{2}+\|(-\Delta)_{y}^{s /2} \psi_n\|_{2}^{2}\leq \|\psi_n\|_{\mathcal{H}^{1,s}}^{2}=1$ and
\begin{align*}
&\int_{\R^2}u_p^r(x)h_n^2(x) \mathrm{d}x\mathrm{d}y\geq \int_{|x|\leq n} \psi_n^2(0) u_p^r(x) \mathrm{d}x
=\|\psi_n\|_\infty^2 \int_{|x|\leq n} u_p^r(x)\mathrm{d}x\mathrm{d}y
\rightarrow \infty~\hbox{as}~n\rightarrow \infty.
\end{align*}
This implies
$$\inf_{0\neq h\in \mathcal{H}^{1,s}(\R^2)}\frac{\Big[{\int_{\mathbb{R}^{2}} \big(|\partial_{x}h|^2 +  |(-\Delta)_{y}^{s /2} h|^2 \big)\,\mathrm{d}x \mathrm{d}y }\Big]} {\int_{\R^2}u_p^r h^2 \mathrm{d}x \mathrm{d}y}=0,$$
that is $\beta_{p,\mu,a,r}=0$\\
This finishes the proof.
\end{proof}
\begin{lemma}  
\label{lemma7.2}
\begin{itemize}
\item[(i)]If $1<r_1<2$, then $C_{(a,b)}<m_{q,\mu_2,b}$. If $r_1=2$, then  $C_{(a,b)}<m_{q,\mu_2,b}$ provided $\beta>\beta_{q,\mu_2,b,r_2}$.
\item[(ii)]If $1<r_2<2$, then $C_{(a,b)}<m_{p,\mu_1,a}$. If $r_2=2$, then $C_{(a,b)}<m_{p,\mu_1,a}$ provided $\beta>\beta_{p,\mu_1,a,r_1}$.
\end{itemize}
\end{lemma}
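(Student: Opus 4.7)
The strategy for (i) is to construct specific test pairs in $D_a\times D_b$ whose fiber maximum $\max_{t>0}J(u_t,v_t)$ strictly undercuts $m_{q,\mu_2,b}$; part (ii) will follow by interchanging the roles of the two components. By Corollary \ref{corollary4.7}, one has $C_{(a,b)}=\inf_{(0,0)\neq(u,v)\in D_a\times D_b}\max_{t>0}J(u_t,v_t)$, so it suffices to exhibit one such pair. The natural choice is $u=\epsilon\phi$ for small $\epsilon>0$ and a suitable test function $\phi\in\H\setminus\{0\}$, together with $v=z_{q,\mu_2,b}$, the scalar ground state in \eqref{2.5}. Since $v\in\tilde{\mathcal{P}}_{q,\mu_2,b}$, Lemma \ref{lemma3.1} says $t\mapsto F_{q,\mu_2}(v_t)$ attains its maximum precisely at $t=1$, with value $m_{q,\mu_2,b}$.

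Set $f_\epsilon(t):=J((\epsilon\phi)_t,v_t)=f_0(t)+g_\epsilon(t)$, where $f_0(t)=F_{q,\mu_2}(v_t)$ and
\[
g_\epsilon(t)=\frac{\epsilon^2 t^{2s}}{2}B-\frac{\mu_1\epsilon^p}{p}t^{\frac{(1+s)(p-2)}{2}}C-\beta\epsilon^{r_1}t^{\frac{(1+s)(r_1+r_2-2)}{2}}D,
\]
with $B:=\|\partial_x\phi\|_2^2+\|(-\Delta)_y^{s/2}\phi\|_2^2$, $C:=\|\phi\|_p^p$, and $D:=\int_{\R^2}|\phi|^{r_1}v^{r_2}\,\dxy$. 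A direct calculation (using $v\in\tilde{\mathcal{P}}_{q,\mu_2,b}$) gives $f_0'(1)=0$ and $f_0''(1)=-\tfrac{sA[(1+s)(q-2)-4s]}{2}<0$, where $A=\|\partial_xv\|_2^2+\|(-\Delta)_y^{s/2}v\|_2^2$; the strict negativity uses the hypothesis $q>\tfrac{2(1+3s)}{1+s}$. By Corollary \ref{corollary4.3} and the implicit function theorem applied to $f_\epsilon'(t)=0$ at $(\epsilon,t)=(0,1)$, for small $\epsilon>0$ the functional $f_\epsilon$ has a unique global maximizer $t_\epsilon\to 1$ as $\epsilon\to 0^+$. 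Expanding $f_\epsilon'$ to first order around $t=1$ yields $t_\epsilon-1=-g_\epsilon'(1)/f_0''(1)+\text{(higher order)}$, whence
\[
\max_{t>0}f_\epsilon(t)-m_{q,\mu_2,b}=g_\epsilon(1)+\frac{[g_\epsilon'(1)]^2}{2|f_0''(1)|}+\text{error}.
\]

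In the case $1<r_1<2$, the dominant term of $g_\epsilon(1)$ is $-\beta D\epsilon^{r_1}$ (since both $\epsilon^2$ and $\epsilon^p$ are of smaller order), while $g_\epsilon'(1)=O(\epsilon^{r_1})$ gives $[g_\epsilon'(1)]^2=O(\epsilon^{2r_1})=o(\epsilon^{r_1})$ because $r_1>1$. Choosing any $\phi$ with $D>0$, for example $\phi=v$, makes the leading contribution $-\beta D\epsilon^{r_1}$ strictly negative, yielding $C_{(a,b)}<m_{q,\mu_2,b}$ for $\epsilon$ sufficiently small. In the case $r_1=2$, both $g_\epsilon(1)$ and the quadratic correction become $O(\epsilon^2)$, but $[g_\epsilon'(1)]^2=O(\epsilon^4)$ is of smaller order; the sign is controlled by $g_\epsilon(1)=\epsilon^2(B/2-\beta D)+o(\epsilon^2)$. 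Since $\beta>\beta_{q,\mu_2,b,r_2}=\tfrac{1}{2}\inf_{\phi\neq 0}B(\phi)/D(\phi)$ by hypothesis, by the very definition of the infimum there exists a nonzero $\phi$ with $B/(2D)<\beta$, giving the desired strict inequality.

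The main technical obstacle is the rigorous justification of the parameter-dependent maximum expansion: one must confirm the uniqueness and convergence $t_\epsilon\to 1$ (via Corollary \ref{corollary4.3} combined with the strict second-order maximum of $f_0$) and verify that all error terms are indeed of lower order than the dominant $\epsilon^{r_1}$ (respectively $\epsilon^2$) contribution; this amounts to standard but careful asymptotics. Finally, part (ii) is obtained by the completely symmetric argument, taking the test pair $(z_{p,\mu_1,a},\epsilon\phi)$ and interchanging $(p,\mu_1,a,r_1)\leftrightarrow(q,\mu_2,b,r_2)$ throughout, with the threshold $\beta_{p,\mu_1,a,r_1}$ replacing $\beta_{q,\mu_2,b,r_2}$ in the borderline case $r_2=2$.
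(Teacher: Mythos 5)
Your proposal is correct and follows essentially the same route as the paper: perturb the scalar ground state $z_{q,\mu_2,b}$ (resp.\ $z_{p,\mu_1,a}$) by a small multiple of a test function in the other component, project onto $\mathcal{P}_{(a,b)}$ via the implicit function theorem near $(\epsilon,t)=(0,1)$, and read off the sign of the leading term, which is $-\beta D\,\epsilon^{r_1}$ in the subquadratic case and $(B/2-\beta D)\epsilon^2$ in the borderline case $r_1=2$ (the paper carries out the symmetric computation for part (ii) and notes (i) is identical). The only cosmetic difference is that you phrase the expansion through the fiber-maximum formula of Corollary \ref{corollary4.7} while the paper differentiates the Pohozaev constraint to get $t'(l)$ explicitly; the resulting asymptotics coincide.
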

\begin{proof}
We will show the proof of only result (ii), the proof of (i) being identical in nature. We write $z:=z_{p,\mu_1,a}$ in order to keep things simple. Let for any $h\in \mathcal{H}^{1,s}(\R^2)$ with $\|h\|_2^2=1$, we have that $(z, lh)\in D_a\times D_b$  provided $|l|<\sqrt{b}$.
For any $l$, there exists a unique $t=t(l)>0$ such that
$( z_t,  lh_t)\in \mathcal{P}_{(a,b)}$, where $t=t(l)$ is determined by
\begin{align*}
%\lab{eq:20220901-we1}
{\int_{\mathbb{R}^{2}}}&{ \big(|\partial_{x}z|^2 +  |(-\Delta)_{y}^{s /2} z|^2}\big)\mathrm{d}x \mathrm{d}y +l^2{\int_{\mathbb{R}^{2}} \big(|\partial_{x}h|^2} +  {|(-\Delta)_{y}^{s /2} h|^2}\big)\mathrm{d}x \mathrm{d}y\nonumber\\
&=\frac{(p-2)(1+s)}{2ps}\mu_1\|z\|_p^p \, t^{\frac{(p-2)(1+s)-4s}{2}}
+\frac{(q-2)(1+s)}{2qs}\mu_2\|h\|_q^q \,  l^q \, t^{\frac{(q-2)(1+s)-4s}{2}}\nonumber\\
&\quad+\frac{(r_1+r_2-2)(1+s)}{2s}\beta \Big(\int_{\R^2}|z|^{r_1}|h|^{r_2}\mathrm{d}x\mathrm{d}y\Big) l^{r_2} \,  t^{\frac{(r_1+r_2-2)(1+s)-4s}{2}}
%&+\frac{(r_2-1)(1+s)}{2s}\beta \Big(\int_{\R^2}|z|^{r_1}|h|^{r_2}\mathrm{d}x\mathrm{d}y\Big) s^{r_2} \,  t^{\frac{(r_1+r_2-2)(1+s)-4s}{2}}
\end{align*}
From the Implicit Function Theorem, we know that $t(l)\in C^1$ locally around $l=0$.
 Taking the derivative with respect to $l$ on both sides we obtain that
 \begin{align*}
 t'(l)=\frac{P_h(l)}{Q_h(l)},
 \end{align*}
 where
 \begin{align*}
%\lab{eq:20220901-we2}
 P_h(l):=&2l{\int_{\mathbb{R}^{2}} \big(\big|\partial_{x}h|^2 +  |(-\Delta)_{y}^{s /2} h|^2}\big)\mathrm{d}x \mathrm{d}y  -\frac{(q-2)(1+s)}{2s}\mu_2 \, \|h\|_q^q \, |l|^{q-2}l \, t^{\frac{(q-2)(1+s)-4s}{2}}\nonumber\\
 &-\frac{(r_1+r_2-2)(1+s)}{2s}\beta \, r_2 \Big(\int_{\R^2}|z|^{r_1} |h|^{r_2}\mathrm{d}x\mathrm{d}y \Big) |l|^{r_2-2}l \, t^{\frac{(r_1+r_2-2)(1+s)-4s}{2}}\nonumber\\
  %&-\frac{(r_2-1)(1+s)}{2s}\beta \, r_2 \Big(\int_{\R^2}|z|^{r_1} |h|^{r_2}\mathrm{d}x\mathrm{d}y \Big) |s|^{r_2-2}s \, t^{\frac{(r_1+r_2-2)(1+s)-4s}{2}}
 \end{align*}
 and
 \begin{align*}
%\lab{eq:20220901-we3}
 Q_h(l):=&\frac{(p-2)(1+s)-4s}{2}\frac{(p-2)(1+s)}{2ps}\mu_1 \, \|z\|_p^p \,  t^{\frac{(p-2)(1+s)-4s-2}{2}}\nonumber\\
 &+\frac{(q-2)(1+s)-4s}{2}\frac{(q-2)(1+s)}{2qs}\mu_2 \, \|h\|_q^q \,  l^q \, t^{\frac{(q-2)(1+s)-4s-2}{2}}\nonumber\\
 &+\frac{(r_1+r_2-2)(1+s)-4s}{2}\frac{(r_1+r_2-2)(1+s)}{2s}\beta \Big(\int_{\R^2}|z|^{r_1}|h|^{r_2}\mathrm{d}x \mathrm{d}y\Big) l^{r_2} \, t^{\frac{(r_1+r_2-2)(1+s)-4s-2}{2s}}\nonumber\\
% &+\frac{(r_1+r_2-2)(1+s)-4s}{2}\frac{(r_2-1))(1+s)}{2s}\beta \Big(\int_{\R^2}|z|^{r_1}|h|^{r_2}\mathrm{d}x \mathrm{d}y\Big) s^{r_2} \, t^{\frac{(r_1+r_2-2)(1+s)-6s}{2}}.
 \end{align*}
  {\bf Case $(i) ~1<r_2<2:$}  In such a case, for $|l|$ small,
 $$P_h(l)=-\frac{(r_1+r_2-2)(1+s)}{2s}\beta r_2 \Big(\int_{\R^2}|z|^{r_1} |h|^{r_2}\mathrm{d}x \mathrm{d}y  \Big) |l|^{r_2-2}l (1+o(1))$$
 and
 $$Q_h(l)=\frac{(p-2)(1+s)-4s}{2} \, \frac{(p-2)(1+s)}{2ps}\mu_1\|z\|_p^p (1+o(1)).$$
 So,
 \begin{equation*}
%\lab{eq:20220901-we4}
 t'(l)=-M_h r_2 |l|^{r_2-2}l (1+o(1)),
 \end{equation*}
 where
 \begin{equation*}
%\lab{eq:20220901-we5}
 M_h:=\frac{{(r_1+r_2-2)(1+s)}{\beta \int_{\R^2}|z|^{r_1} |h|^{r_2}\mathrm{d}x\mathrm{d}y}}{({(p-2)(1+s)-4s})s{\int_{\mathbb{R}^{2}} \big(|\partial_{x}z|^2 +  |(-\Delta)_{y}^{s /2} z|^2}\big)\mathrm{d}x \mathrm{d}y}.
 \end{equation*}
 Then,
 \begin{equation*}
%\lab{eq:20220901-we6}
 t(l)=1-M_h|l|^{r_2} (1+o(1))
 \end{equation*}
 and for any $\tau>0$,
 \begin{equation*}
%\lab{eq:20220901-we7}
 t(l)^\tau=1-\tau M_h |l|^{r_2} (1+o(1)).
 \end{equation*}
Thus, we have
 \begin{align*}
 J( z_{t(l)},  lh_{t(l)})-J(z,0)
 &=\frac{1}{2}(t^{2s}(l)-1){\int_{\mathbb{R}^{2}} \big(\big|\partial_{x}z|^2 +  |(-\Delta)_{y}^{s /2} z|^2}\big)\mathrm{d}x \mathrm{d}y \, -\frac{\mu_1}{p} \, \|z\|_p^p \, (t(l)^{\frac{(p-2)(1+s)}{2}}-1)\\
 &+\frac{1}{2}l^2 \, t^{2s}(l){\int_{\mathbb{R}^{2}} \big(\big|\partial_{x}h|^2 +  |(-\Delta)_{y}^{s /2} h|^2}\big)\mathrm{d}x \mathrm{d}y \,   -\frac{\mu_2}{q} \, \, \|h\|_q^q \, l^q \, t(l)^{\frac{(q-2)(1+s)}{2}}\\
 &-\beta \Big(\int_{\R^2}|z|^{r_1}|h|^{r_2}\mathrm{d}x\mathrm{d}y\Big) |l|^{r_2} \, t(l)^{\frac{(r_1+r_2-2)(1+s)}{2}}\\
 =&-\Big({s}{\int_{\mathbb{R}^{2}} \big(\big|\partial_{x}z|^2 +  |(-\Delta)_{y}^{s /2} z|^2}\big)\mathrm{d}x \mathrm{d}y \Big) M_h \, |l|^{r_2} o(1) -\beta \Big(\int_{\R^2}|z|^{r_1}|h|^{r_2}\mathrm{d}x \mathrm{d}y\Big) |l|^{r_2}(1+o(1))\\
 =&-\beta \Big(\int_{\R^2}|z|^{r_1}|h|^{r_2}\mathrm{d}x \mathrm{d}y\Big) |l|^{r_2}(1+o(1)).
 \end{align*}
Hence, if $1<r_2<2$, by taking $l$ close to $0$, we obtain that
\begin{equation*}
%\lab{eq:20220901-e9}
C_{(a,b)}\leq J( z_{t(l)},  lh_{t(l)})<J(z,0)=m_{p,\mu_1,a}.
\end{equation*}

{\bf Case $(ii)~ r_2=2$}: In such a case, we have, for $l>0$ small,
\begin{align*}
 P_h(l)=\Big[2{\int_{\mathbb{R}^{2}} \big(\big|\partial_{x}h|^2} +  {|(-\Delta)_{y}^{s /2} h|^2}\big)\mathrm{d}x \mathrm{d}y-\frac{(1+s)}{s}\beta r_1 \int_{\R^2}|z|^{r_1} |h|^{2}\mathrm{d}x \mathrm{d}y 
 \Big] l (1+o(1))
 \end{align*}
 and
\begin{equation*}
%\lab{eq:20220901-e10}
 t'(l)=\bar{M}_h l(1+o(1))
\end{equation*}
where
\begin{equation*}
%\lab{eq:20220901-e11}
\bar{M}_h:=\frac{2s{\int_{\mathbb{R}^{2}} \big(|\partial_{x}h|^2} +  {|(-\Delta)_{y}^{s /2} h|^2}\big)\mathrm{d}x \mathrm{d}y-(1+s)\beta r_1 \int_{\R^2}|z|^{r_1} |h|^{2}\mathrm{d}x \mathrm{d}y
 }{{\frac{(p-2)(1+s)-4s}{2}s\int_{\mathbb{R}^{2}} \big(|\partial_{x}z|^2} +  {|(-\Delta)_{y}^{s /2} z|^2}\big)\mathrm{d}x \mathrm{d}y}.
\end{equation*}
Then
$$t(l)=1+\frac{1}{2}\bar{M}_h \,l^2(1+o(1))$$
and for any $\tau>0$,
$$t(l)^\tau=1+\frac{\tau}{2}\bar{M}_h \,l^2(1+o(1)).$$
Thus, using the same reasoning as in the $1<r_2<2$ Case, we arrive at the conclusion that
\begin{align*}
J( z_{t(l)},  lh_{t(l)})-J(z,0)
 =&\frac{1}{2}(t^{2s}(l)-1){\int_{\mathbb{R}^{2}} \big(|\partial_{x}z|^2} +  {|(-\Delta)_{y}^{s /2} z|^2}\big)\mathrm{d}x \mathrm{d}y \,  -\frac{ \mu_1}{p}\,\|z\|_p^p(t(l)^{\frac{(p-2)(1+s)}{2}}-1) \\
 &+\frac{1}{2}l^2 \, t(l)^{2s} \int_{\mathbb{R}^{2}} \big(|\partial_{x}h|^2 +  {|(-\Delta)_{y}^{s /2} h|^2}\big)\mathrm{d}x \mathrm{d}y \, -\frac{\mu_2}{q}\,  \|h\|_q^q \, l^q \, t(l)^{\frac{(q-2)(1+s)}{2}}\\
 &-{\beta} \Big( \int_{\R^2}|z|^{r_1}|h|^{2}\mathrm{d}x \mathrm{d}y\Big) |l|^{2} t(l)^{\frac{r_1(1+s)}{2}}\\
 =&\left[\frac{1}{2}\int_{\mathbb{R}^{2}} \big(|\partial_{x}h|^2 +  {|(-\Delta)_{y}^{s /2} h|^2}\big)\mathrm{d}x \mathrm{d}y -\beta  \int_{\R^2}|z|^{r_1}|h|^{2}\mathrm{d}x\mathrm{d}y \right]l^2(1+o(1)).
\end{align*}
Then for any $\beta>\beta_{p,\mu_1,a,r_1}$, there exists some $h\in \mathcal{H}^{1,s}(\R^2)$ such that
$$\frac{1}{2}\int_{\mathbb{R}^{2}} \big(|\partial_{x}h|^2 +  {|(-\Delta)_{y}^{s /2} h|^2}\big)\mathrm{d}x \mathrm{d}y  -\beta \int_{\R^2}|z|^{r_1}|h|^{2}\mathrm{d}x \mathrm{d}y<0,$$
and for such a $h$ and $|l|$ small, we have
$J(z_{t(l)},  lh_{t(l)})-J(z,0)<0$. Hence,
$$C_{(a,b)}<m_{p,\mu_1,a}.$$
This finishes the proof.
\end{proof}

\section{Proof of Main Theorem }\label{sec:PS-sequence-sec-5} 
\renewcommand{\theequation}{5.\arabic{equation}}
%%%%%%%%%%%%%%%%%%%%%

The following outcome will directly lead to Theorem \ref{th:main-t1}.
\begin{theorem}\label{theorem8.1}
Let  $\frac{2(1+3s)}{1+s}<p,q,r_1+r_2<2_s=\frac{2(1+s)}{1-s}$ and $\mu_1,\mu_2,\beta, a,b\in \R^+$.  Let
 $\{(u_n,v_n) \} \subset D_a\times D_b$ be a bounded sequence such that
\begin{itemize}
\item[(i)] $J(u_n,v_n)\rightarrow C_{(a,b)}$,
\item[(ii)] $ J'(u_n,v_n) + \lambda_{1,n}(u_n,0) + \lambda_{2,n}(0,v_n) \to 0, \quad  \mbox{in } \mathbb{D}^{-1}$ for some real sequences $\{\lambda_{1,n}\}$ and $\{\lambda_{2,n}\}$,
\item[(iii)] $ P(u_n,v_n) \rightarrow 0$,
\item[(iv)] $(u_n,v_n) \rightharpoonup (u,v) $   weakly in  $\mathbb{D}$ and  $(u_n,v_n) \rightarrow (u,v) $ strongly in  $L^{\eta}(\R^2) \times L^{\eta}(\R^2)$ for all $2 < \eta < 2_s$. In addition $u \geq 0$ and $v\geq 0$.
\end{itemize}
Assume that
\begin{equation}\label{strict_inequality}
C_{(a,b)}<\min \{m_{p,\mu_1,a}, m_{q,\mu_2,b}\}.
\end{equation}
where $ m_{p,\mu_1,a}:= J(z_{p,\mu_1,a}, 0) \quad \mbox{and} \quad m_{q,\mu_2,b}:= J(0, z_{q,\mu_2,b})$. Then, up to a subsequence, $(u_n,v_n)\rightarrow (u,v)$ in $\mathbb{D}$ and  $(u,v)\in S_a\times S_b$.
\end{theorem}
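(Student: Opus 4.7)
The plan is to first extract information about the Lagrange multipliers $\lambda_{1,n},\lambda_{2,n}$, then use the strict inequality $C_{(a,b)}<\min\{m_{p,\mu_1,a},m_{q,\mu_2,b}\}$ to rule out degeneracy of the weak limit, and finally promote weak convergence to strong convergence.

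First I would test $J'(u_n,v_n)+\lambda_{1,n}(u_n,0)+\lambda_{2,n}(0,v_n)\to 0$ against $(u_n,0)$ and against $(0,v_n)$. Combining the resulting two scalar identities with the boundedness of $(u_n,v_n)$ in $\mathbb{D}$ and the strong $L^\eta$ convergence for $2<\eta<2_s$, I would show that $\{\lambda_{1,n}\}$ and $\{\lambda_{2,n}\}$ are bounded, so up to subsequences $\lambda_{j,n}\to\lambda_j$. Passing to the limit in the equation then gives a weak solution $(u,v)\in\mathbb{D}$, $u,v\geq 0$, of system \eqref{4.1} with these $\lambda_1,\lambda_2$. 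At this stage I would also show $\lambda_1,\lambda_2\geq 0$: combining $\langle J'(u_n,v_n)\mid(u_n,v_n)\rangle$ with the Pohozaev identity $P(u_n,v_n)\to 0$, one obtains a combination of the kinetic and nonlinear terms in which the signs of $\lambda_1\|u\|_2^2+\lambda_2\|v\|_2^2$ are controlled; the computation parallels Lemma \ref{lemma4.4} and yields non-negativity. Strict positivity will follow once it is shown that both components are nontrivial, by Lemma \ref{lemma 4.1} applied to the limit system.

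The core step is the non-vanishing of the two components and the conservation of mass. Set $a_1:=\|u\|_2^2\leq a$ and $b_1:=\|v\|_2^2\leq b$. I would apply the Brezis--Lieb lemma to the $L^p$, $L^q$ and mixed $L^{r_1}\!L^{r_2}$ terms (the latter by H\"older combined with the strong $L^\eta$ convergence on the cross exponents) and the weak lower semicontinuity of the anisotropic seminorms, to obtain
\begin{equation*}
C_{(a,b)}=\lim_n J(u_n,v_n)\geq J(u,v)+\liminf_n J(u_n-u,v_n-v),
\end{equation*}
together with $P(u,v)+\liminf_n P(u_n-u,v_n-v)\leq 0$. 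Using Corollary \ref{corollary4.3} on the residual pair $(u_n-u,v_n-v)$ (which lies in $D_{a-a_1}\times D_{b-b_1}$ modulo $o(1)$) and the inequality $C_{(a,b)}\geq C_{(a-a_1,b-b_1)}$, plus the fact that $J(u,v)\geq C_{(a_1,b_1)}$ whenever $(u,v)\in\mathcal P$, I would show $C_{(a,b)}=C_{(a_1,b_1)}+C_{(a-a_1,b-b_1)}$ with $(u,v)\in\mathcal P_{(a_1,b_1)}$ attaining $C_{(a_1,b_1)}$. Now if $u\equiv 0$, then $v$ would be a minimizer for the scalar problem at mass $b_1\leq b$, forcing $C_{(a,b)}\geq m_{q,\mu_2,b_1}\geq m_{q,\mu_2,b}$ by Lemma \ref{lemma3.2}, contradicting \eqref{strict_inequality}; symmetrically for $v\equiv 0$. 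Hence both components are nontrivial.

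It remains to exclude loss of mass, i.e.\ $(a_1,b_1)\neq(a,b)$. If $(a_1,b_1)$ were strictly smaller, the residual sequence would produce a nontrivial contribution to $C_{(a,b)}-C_{(a_1,b_1)}>0$, and by the strict monotonicity of $C_{(\cdot,\cdot)}$ in its arguments (which I would derive from the scaling $w\mapsto w_t$ keeping the $L^2$-norm invariant and Corollary \ref{corollary4.3}, in the spirit of Lemma \ref{lemma3.2}), this would again contradict \eqref{strict_inequality} either by forcing $C_{(a_1,b_1)}\geq m_{p,\mu_1,a_1}\wedge m_{q,\mu_2,b_1}$ or by splitting mass between scalar ground states. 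Thus $a_1=a$, $b_1=b$, which combined with weak $L^2$ convergence upgrades to strong $L^2$ convergence and hence $u_n\to u$, $v_n\to v$ in every $L^\eta$ for $2\leq\eta<2_s$. Finally, strong convergence in $\mathbb{D}$ follows by testing $J'(u_n,v_n)-J'(u,v)$ against $(u_n-u,v_n-v)$, using the now-established convergence of the nonlinear terms and of $\lambda_{j,n}\|u_n\|_2^2,\lambda_{j,n}\|v_n\|_2^2$.

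The main obstacle will be the mass-conservation step: the anisotropic Sobolev--Liouville setting together with the concave-convex coupling $|u|^{r_1}|v|^{r_2}$ makes the dichotomy analysis delicate, because the residual sequence is not a priori a Palais--Smale sequence at a clean level. The strict inequality \eqref{strict_inequality} combined with Lemma \ref{lemma7.2} and the monotonicity of $m_{p,\mu,a}$ is precisely what prevents both the vanishing and the splitting scenarios, and its careful implementation via the sub-additivity $C_{(a,b)}\leq C_{(a_1,b_1)}+C_{(a-a_1,b-b_1)}$ (strict unless one of the pieces is trivial) is where the argument stands or falls.
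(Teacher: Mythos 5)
Your opening steps (boundedness of $\lambda_{1,n},\lambda_{2,n}$, passage to the limit equation, and the exclusion of $u\equiv 0$ or $v\equiv 0$ by comparing $C_{(a,b)}$ with $m_{q,\mu_2,b_1}\geq m_{q,\mu_2,b}$ via the monotonicity of Lemma \ref{lemma3.2} and the strict inequality \eqref{strict_inequality}) are sound and essentially coincide with the paper's argument; note only that the paper gets $P(u,v)=0$ for free from Lemma \ref{lemma 4.1} once $(u,v)$ is identified as a solution of the limit system, which is cleaner than tracking $\liminf_n P(u_n-u,v_n-v)$.

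The genuine gap is in your mass-conservation step. Because the constraint here is the \emph{relaxed} set $D_a\times D_b$ (closed $L^2$-balls), $L^2$-mass can escape to infinity at zero energy cost: a residual of the form $w_n(x,y)=n^{-1}\phi(x/n,y/n)$ carries fixed $L^2$-mass but has vanishing kinetic energy and vanishing $L^\eta$-norms for $\eta>2$. Hence $\liminf_n J(u_n-u,v_n-v)$ can equal $0$ even when $a_1<a$, and your identity $C_{(a,b)}=C_{(a_1,b_1)}+C_{(a-a_1,b-b_1)}$ with a strictly positive second summand is unjustified; the residual need not approach $\mathcal{P}_{(a-a_1,b-b_1)}$ at all. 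Moreover, since $D_{a_1}\times D_{b_1}\subset D_a\times D_b$, the level $C_{(\cdot,\cdot)}$ is automatically \emph{non-increasing} in its arguments (so in fact $C_{(a_1,b_1)}\geq C_{(a,b)}$, the reverse of the inequality you invoke), and no contradiction follows from mass loss alone. The strict monotonicity you want cannot be derived from the fiber map \eqref{eq:ut-invariant}, precisely because that scaling preserves the $L^2$-norm and therefore gives no information about how the energy responds to a change of mass. The paper supplies the missing ingredient in Lemma \ref{lemma8.2}: one first proves $\lambda_1>0$, and then, assuming $\|u\|_2^2<a$, perturbs to $((1+l)u_{t(l)},v_{t(l)})\in\mathcal{P}_{(a,b)}$ and computes that the derivative of the energy at $l=0$ equals $-\lambda_1\|u\|_2^2<0$ (the other terms cancel by the Euler--Lagrange equation and $P(u,v)=0$), contradicting minimality. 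Without this Lagrange-multiplier-sign argument, or an equivalent substitute, your proof does not reach the conclusion $(u,v)\in S_a\times S_b$.
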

\begin{proof}
Without loss of generality, we may assume that $(u_n,v_n)\neq (0,0)$ for all $n\in \mathbb{N}$ since $J(u_n,v_n)\rightarrow C_{a,b}>0=J(0,0)$, see Corollary \ref{corollary4.7}
First note that (ii) can be rewrite as : there exists sequences $\{\lambda_{1,n}\}$ and $\{\lambda_{2,n}\}$ such that
\begin{equation*}
%\lab{eq:20210623-we2}
\begin{cases}
 -\partial_{xx}u_n + (-\Delta)_{y}^{s} u_n +\lambda_{1,n}u_n-\mu_1 u_{n}^{p-1}-\beta r_1 u_{n}^{r_1-1}v_{n}^{r_2}=o(1),\\
 -\partial_{xx}v_n + (-\Delta)_{y}^{s} v_n +\lambda_{2,n}v_n-\mu_2 v_{n}^{q-1}-\beta r_2 u_{n}^{r_1}v_{n}^{r_2-1}=o(1).
\end{cases}
\end{equation*}
From the boundedness of $\{(u_n,v_n)\} \subset \mathbb{D}$,  it follows that $\{\lambda_{1,n}\}$ and $\{\lambda_{2,n}\}$ are also bounded and we can assume that $\lambda_{1,n}\rightarrow \lambda_1$ and $\lambda_{2,n}\rightarrow \lambda_2$ for some $\lambda_1, \lambda_2$.
We claim that
\begin{equation}\label{eq:20220830-we1}
u\neq 0, v\neq 0.
\end{equation}
If not, without loss of generality, we may assume that $u=0$. Then we have that $v\neq 0$. If not, from $P(u_n,v_n) \to 0$ we get that
\begin{align*}
%\lab{eq:20220830-we2}
&\int_{\mathbb{R}^{2}} \big(|\partial_{x}u_n|^2 +  |(-\Delta)_{y}^{s /2} u_n|^2 \big)\,\mathrm{d}x \mathrm{d}y +\int_{\mathbb{R}^{2}} \big(|\partial_{x}v_n|^2 +  |(-\Delta)_{y}^{s /2} v_n|^2  \big)\,\mathrm{d}x \mathrm{d}y\nonumber\\
&=\frac{(p-2)(1+s)}{2ps}\mu_1\|u_n\|_p^p
+\frac{(q-2)(1+s)}{2qs}\mu_2\|v_n\|_q^q
+\frac{(r_1+r_2-1)(1+s)}{2s}\beta \int_{\R^2} |u_n|^{r_1} |v_n|^{r_2} \mathrm{d}x\mathrm{d}y  + o(1)=o(1),
\end{align*}
where we have used that
$$
\|u_n\|_p^p=o(1), \|v_n\|_q^q=o(1) \mbox{ and } \int_{\R^2}u_{n}^{r_1}v_{n}^{r_2}\mathrm{d}x=o(1),
$$
by the H\"older inequality and Gagliardo-Nirenberg inequality. This contradicts Lemma \ref{lemma4.6} which proves the claim. Now
we know that
$$0<\|v\|_2^2\leq \liminf_{n\rightarrow \infty}\|v_n\|_2^2$$
which implies $v$ is a non-trivial (non-negative) solution to
\begin{equation}\label{eq:20210623-wze1}
-\partial_{xx}v + (-\Delta)_{y}^{s} v +\lambda_{2}v=\mu_2 v^{q-1}\hbox{in}~\R^2, \, v\in \mathcal{H}^{1,s}(\R^2).
\end{equation}
Recalling the Pohozaev identity
\begin{equation*}
%\lab{eq:20220830-ze2}
\int_{\mathbb{R}^{2}} \big(|\partial_{x}v|^2 +  |(-\Delta)_{y}^{s /2} v|^2\big)\mathrm{d}x\mathrm{d}y=\frac{(q-2)(1+s)}{2qs}\mu_2\|v\|_q^q,
\end{equation*}
and \eqref{eq:20210623-wze1}, we deduce that
$$\lambda_2\|v\|_2^2=\frac{2qs-(q-2){(1+s)}}{2qs}\mu_2 \|v\|_q^q>0.$$
Hence, $\lambda_2>0$. So by
\begin{align*}
&\int_{\mathbb{R}^{2}} \big(|\partial_{x}v_n|^2 +  |(-\Delta)_{y}^{s /2} v_n|^2  \big)\,\mathrm{d}x \mathrm{d}y+\lambda_{2,n}\|v_n\|_2^2=\mu_2\|v_n\|_q^q+\beta r_2\int_{\R^2}u_{n}^{r_1} v_{n}^{r_2}\mathrm{d}x\mathrm{d}y+o(1)=\mu_2\|v\|_q^q+o(1)\\
&=\int_{\mathbb{R}^{2}} \big(|\partial_{x}v|^2 +  |(-\Delta)_{y}^{s /2} v|^2  \big)\,\mathrm{d}x \mathrm{d}y+\lambda_{2}\|v\|_2^2+o(1),
\end{align*}
we have that $v_n\rightarrow v$ in $\mathcal{H}^{1,s}(\R^2)$.
Now using that $P(u_n,v_n) \to 0,$ and $ u_n\rightarrow 0$ in $L^\eta(\R^2)$ for all $2<\eta<2_s$, it is easy to prove that
{$\int_{\mathbb{R}^{2}} \big(|\partial_{x}u|^2 +  |(-\Delta)_{y}^{s /2} u|^2  \big)\,\mathrm{d}x \mathrm{d}y\rightarrow 0$.} So
$$ J(0,v)=\lim_{n\rightarrow \infty}J(u_n,v_n)=C_{(a,b)}.$$
Put $\delta:=\|v\|_2^2$, then $\delta\in (0,b]$.
By Lemma \ref{lemma3.1}, one has that
$v=z_{q,\mu_2,\delta}$ and $C_{(a,b)}=J(0,v)=m_{q,\mu_2,\delta}$.
However, by Lemma \ref{lemma3.2}-(i), and \eqref{strict_inequality} we have
$m_{q,\mu_2,\delta}\geq  m_{q,\mu_2,b}> C_{(a,b)},$
 a contradiction. This ends the proof of \eqref{eq:20220830-we1} and thus
\begin{equation*}
0<\|u\|_2^2\leq \liminf_{n\rightarrow \infty}\|u_n\|_2^2 \quad \mbox{and} \quad 0<\|v\|_2^2\leq \liminf_{n\rightarrow \infty}\|v_n\|_2^2.
\end{equation*}
Now, from the convergence properties (iv), we deduce that
 $(u,v)$ is a non-trivial, solution to \eqref{4.1}. In particular it holds that $P(u,v)=0$ and  by
\begin{align*}
&\int_{\mathbb{R}^{2}} \big(|\partial_{x}u|^2 +  |(-\Delta)_{y}^{s /2} u|^2 \big)\,\mathrm{d}x \mathrm{d}y +\int_{\mathbb{R}^{2}} \big(|\partial_{x}v|^2 +  |(-\Delta)_{y}^{s /2} v|^2  \big)\,\mathrm{d}x \mathrm{d}y \\
&\leq o(1)+\int_{\mathbb{R}^{2}} \big(|\partial_{x}u_n|^2 +  |(-\Delta)_{y}^{s /2} u_n|^2 \big)\,\mathrm{d}x \mathrm{d}y +\int_{\mathbb{R}^{2}} \big(|\partial_{x}v_n|^2 +  |(-\Delta)_{y}^{s /2} v_n|^2  \big)\,\mathrm{d}x \mathrm{d}y\\
=&o(1)+\frac{(p-2)(1+s)}{2ps}\mu_1\|u_n\|_p^p+\frac{(q-2)(1+s)}{2qs}\mu_2\|v_n\|_q^q+\frac{(r_1+r_2-2)(1+s)}{2s}\beta \int_{\R^2} |u_n|^{r_1} |v_n|^{r_2} \mathrm{d}x\mathrm{d}y \\
%&+\frac{(r_2-1))(1+s)}{2s}\beta \int_{\R^2} |u_n|^{r_1} |v_n|^{r_2} \mathrm{d}x\mathrm{d}y\\
=&o(1)+\frac{(p-2)(1+s)}{2ps}\mu_1\|u\|_p^p+\frac{(q-2)(1+s)}{2qs}\mu_2\|v\|_q^q+\frac{(r_1+r_2-2)(1+s)}{2s}\beta \int_{\R^2} |u|^{r_1} |v|^{r_2} \mathrm{d}x\mathrm{d}y \\
%&+\frac{(r_2-1))(1+s)}{2s}\beta \int_{\R^2} |u|^{r_1} |v|^{r_2} \mathrm{d}x\mathrm{d}y\\
=&o(1)+\int_{\mathbb{R}^{2}} \big(|\partial_{x}u|^2 +  |(-\Delta)_{y}^{s /2} u|^2 \big)\,\mathrm{d}x \mathrm{d}y +\int_{\mathbb{R}^{2}} \big(|\partial_{x}v|^2 +  |(-\Delta)_{y}^{s /2} v|^2  \big)\,\mathrm{d}x \mathrm{d}y 
\end{align*}
we obtain that
$${\int_{\mathbb{R}^{2}} \big(|\partial_{x}u_n|^2 +  |(-\Delta)_{y}^{s /2} u_n|^2  \big)\,\mathrm{d}x \mathrm{d}y\rightarrow \int_{\mathbb{R}^{2}} \big(|\partial_{x}u|^2 +  |(-\Delta)_{y}^{s /2} u|^2  \big)\,\mathrm{d}x \mathrm{d}y, }$$\
{$$\int_{\mathbb{R}^{2}} \big(|\partial_{x}v_n|^2 +  |(-\Delta)_{y}^{s /2} v_n|^2  \big)\,\mathrm{d}x \mathrm{d}y\rightarrow \int_{\mathbb{R}^{2}} \big(|\partial_{x}v|^2 +  |(-\Delta)_{y}^{s /2} v|^2  \big)\,\mathrm{d}x \mathrm{d}y.$$} Hence, $\displaystyle J(u,v)=\lim_{n\rightarrow \infty}J(u_n,v_n)=C_{(a,b)}.$
\noindent
Recalling \eqref{4.1}, we conclude that $\lambda_1>0,\lambda_2>0$.
Then by
\begin{align*}
&\int_{\mathbb{R}^{2}} \big(|\partial_{x}u_n|^2 +  |(-\Delta)_{y}^{s /2} u_n|^2  \big)\,\mathrm{d}x \mathrm{d}y+\lambda_{1,n}\|u_n\|_2^2=\mu_1\|u_n\|_p^p+\beta r_2\int_{\R^2}u_{n}^{r_1} v_{n}^{r_1}\mathrm{d}x\mathrm{d}y+o(1)\\
=&\mu_1\|u\|_p^p+\beta r_2\int_{\R^2}u_{n}^{r_1} v_{n}^{r_1}\mathrm{d}x\mathrm{d}y+o(1)=\int_{\mathbb{R}^{2}} \big(|\partial_{x}u|^2 +  |(-\Delta)_{y}^{s /2} u|^2  \big)\,\mathrm{d}x \mathrm{d}y+\lambda_{1}\|u\|_2^2+o(1),
\end{align*}
we obtain that $u_n\rightarrow u$ in $\mathcal{H}^{1,s}(\R^2)$. Similarly, we can prove that $v_n\rightarrow v$ in $\mathcal{H}^{1,s}(\R^2)$.  To obtain that $(u,v)\in S_a\times S_b$ and complete the proof we shall use Lemma \ref{lemma8.2} below.
\end{proof}
\begin{lemma}\label{lemma8.2}
Let $(u,v), \lambda_1,\lambda_2$ be as given by Theorem \ref{theorem8.1}. Then $\lambda_1>0$ implies $u\in S_a$ and  $\lambda_2>0$ implies that $v\in S_b$.
\end{lemma}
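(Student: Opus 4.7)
The plan is to argue by contradiction. Suppose $\lambda_1>0$ but $\|u\|_2^2<a$ strictly. By Theorem \ref{theorem8.1} we have $u\not\equiv 0$, $(u,v)\in\mathcal{P}_{(a,b)}$, and $J(u,v)=C_{(a,b)}$, so $(u,v)$ is a minimizer of $J$ over $\mathcal{P}_{(a,b)}$. I will exploit the strict slackness $\|u\|_2^2<a$ to perturb $u$ in the mass-increasing direction while using the fiber scaling $w\mapsto w_t$ (which preserves the $L^2$-norm) to project the perturbation back onto the Pohozaev manifold. Combining minimality with the Lagrange relation of Lemma \ref{lemma4.8} will then force $\lambda_1=0$, the desired contradiction.

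For $|\epsilon|$ small, consider the perturbation $((1+\epsilon)u,v)$. Since $\|(1+\epsilon)u\|_2^2=(1+\epsilon)^2\|u\|_2^2<a$ for $|\epsilon|$ sufficiently small, this pair lies in $D_a\times D_b$. By Corollary \ref{corollary4.3}, there is a unique $t(\epsilon)>0$ with
\begin{equation*}
\big((1+\epsilon)u_{t(\epsilon)},\,v_{t(\epsilon)}\big)\in\mathcal{P}_{(a,b)},
\end{equation*}
and $t(0)=1$ because $(u,v)\in\mathcal{P}$. Thanks to $\phi''_{(u,v)}(1)<0$ from Lemma \ref{lemma4.9}, the implicit function theorem applied to the defining equation $\phi'_{((1+\epsilon)u,v)}(t)=0$ yields that $\epsilon\mapsto t(\epsilon)$ is of class $C^1$ near $0$.

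Setting $F(\epsilon):=J\big((1+\epsilon)u_{t(\epsilon)},v_{t(\epsilon)}\big)$, the minimality of $(u,v)$ on $\mathcal{P}_{(a,b)}$ gives $F(\epsilon)\ge F(0)=C_{(a,b)}$, so $F'(0)=0$. Differentiating directly,
\begin{equation*}
F'(0)=\langle J'(u,v),(u,0)\rangle+t'(0)\,\langle J'(u,v),(\xi_u,\xi_v)\rangle,
\end{equation*}
where $\xi_u:=\partial_t u_t|_{t=1}$ and $\xi_v:=\partial_t v_t|_{t=1}$. Substituting the Euler--Lagrange relation $J'(u,v)=-\lambda_1(u,0)-\lambda_2(0,v)$ from Lemma \ref{lemma4.8}, and noting that mass invariance $\|u_t\|_2^2\equiv\|u\|_2^2$ forces $\int_{\R^2} u\,\xi_u\,\mathrm{d}x\mathrm{d}y=\tfrac{1}{2}\tfrac{d}{dt}\|u_t\|_2^2|_{t=1}=0$ (and likewise $\int_{\R^2} v\,\xi_v\,\mathrm{d}x\mathrm{d}y=0$), the tangential bracket vanishes and I obtain $F'(0)=-\lambda_1\|u\|_2^2$. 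Since $u\not\equiv 0$, $F'(0)=0$ forces $\lambda_1=0$, contradicting $\lambda_1>0$; hence $\|u\|_2^2=a$. The analogous argument with the perturbation $(u,(1+\epsilon)v)$ yields $\lambda_2>0\Rightarrow v\in S_b$.

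The main obstacle is to set up a perturbation respecting both the inequality constraint $(u,v)\in D_a\times D_b$ and the Pohozaev equality $P(u,v)=0$ simultaneously: the slackness $\|u\|_2^2<a$ permits moving in the radial direction $u\mapsto(1+\epsilon)u$ without leaving $D_a$, while the $L^2$-preserving fiber scaling $t(\epsilon)$ restores the Pohozaev equality. Once this two-parameter variation is in place, the decisive observation is that the $L^2$-invariance of the fiber map kills the tangential contribution $t'(0)\langle J'(u,v),(\xi_u,\xi_v)\rangle$, leaving $-\lambda_1\|u\|_2^2$ as the sole surviving term in $F'(0)$.
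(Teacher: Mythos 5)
Your proposal is correct and follows essentially the same route as the paper: both perturb $u\mapsto(1+\epsilon)u$ using the slack $\|u\|_2^2<a$, re-project onto $\mathcal{P}_{(a,b)}$ via the mass-preserving fiber map, and observe that the tangential contribution carries the factor $P(u,v)=0$, so the derivative of the energy along the perturbation reduces to $\langle J'(u,v),(u,0)\rangle=-\lambda_1\|u\|_2^2$. The only cosmetic difference is that the paper concludes from the strict negativity of this one-sided derivative that $J$ drops below $C_{(a,b)}$ for small positive perturbations, whereas you use the two-sided first-order condition $F'(0)=0$ at the interior minimizer to force $\lambda_1=0$; both yield the same contradiction.
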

\begin{proof}
Suppose that $\lambda_1>0$, we shall prove that $u\in S_a$. If not, $\delta:=\|u\|_2^2\in (0, a)$. Then for $l>0$ small enough, we still have that $((1+l)u, v)\in (D_a\times D_b)\backslash\{(0,0)\}$. For any given $l$, by Corollary \ref{corollary4.3}, there exists a unique $t=t(l)>0$ such that
$((1+l)u_t, v_t)\in \mathcal{P}_{(a,b)}$.
Precisely, $t=t(l)$ is determined by
{\allowdisplaybreaks
\begin{align}
%\lab{eq:20220831-we1}
(1+l)^2\int_{\mathbb{R}^{2}}& \big(|\partial_{x}u|^2 +  |(-\Delta)_{y}^{s /2} u|^2 \big)\,\mathrm{d}x \mathrm{d}y +\int_{\mathbb{R}^{2}} \big(|\partial_{x}v|^2 +  |(-\Delta)_{y}^{s /2} v|^2  \big)\,\mathrm{d}x \mathrm{d}y\nonumber\\
&=\frac{(p-2)(1+s)}{2ps}\mu_1\|u\|_p^p \, t^{\frac{(p-2)(1+s)-4s}{2}}(1+l)^p+\frac{(q-2)(1+s)}{2qs}\mu_2\|v\|_q^q \, t^{\frac{(q-2)(1+s)-4s}{2}}\nonumber\\
&\quad+\frac{(r_1+r_2-2)(1+s)}{2s}\beta \Big(\int_{\R^2} |u|^{r_1} |v|^{r_2} \mathrm{d}x\mathrm{d}y \Big) t^{\frac{(r_1+r_2-2)(1+s)-4s}{2}}(1+l)^{r_1}.\nonumber
%&+\frac{(r_1-1)(1+s)}{2s}\beta \Big(\int_{\R^2} |u|^{r_1} |v|^{r_2} \mathrm{d}x \Big) t^{\frac{(r_1+r_2-2)(1+s)-4s}{2}}(1+s)^{r_1}
\end{align}}
From the Implicit Function Theorem, we have that $t(l)\in C^1$. Then, since
%{\allowdisplaybreaks
\begin{align}
%\lab{eq:20220831-we2}
&J((1+l)u_t,  v_t)=\frac{1}{2}[(1+l)^2t^{2s}\int_{\mathbb{R}^{2}} \big(|\partial_{x}u|^2 +  |(-\Delta)_{y}^{s /2} u|^2 \big)\,\mathrm{d}x \mathrm{d}y +t^{2s}\int_{\mathbb{R}^{2}} \big(|\partial_{x}v|^2 +  |(-\Delta)_{y}^{s /2} v|^2  \big)\,\mathrm{d}x \mathrm{d}y]\nonumber\\
&\quad-\frac{\mu_1}{p}\|u\|_p^p (1+l)^p \, t^{\frac{(p-2)(1+s)}{2}}
-\frac{\mu_2}{q}\|v\|_q^q \, t^{\frac{(q-2)(1+s)}{2}}-{\beta} \Big(\int_{\R^2} |u|^{r_1} |v|^{r_2} \mathrm{d}x \mathrm{d}y\Big) t^{\frac{(r_1+r_2-2)(1+s)}{2}}(1+l)^{r_1},\nonumber
%&-\frac{\beta}{2} \Big(\int_{\R^2} |u|^{r_1} |v|^{r_2} \mathrm{d}x \Big) t^{\frac{(r_1+r_2-2)(1+s)}{2}}(1+s)^{r_1},\nonumber
\end{align}
%}
we have
{\allowdisplaybreaks
\begin{align*}
%\lab{eq:20220831-we3}
\frac{d}{dl}J( (1+l)u_t, & v_t)
=(1+l)t^{2s}\int_{\mathbb{R}^{2}} \big(|\partial_{x}u|^2 +  |(-\Delta)_{y}^{s /2} u|^2 \big)\,\mathrm{d}x \mathrm{d}y-\mu_1\|u\|_p^p (1+l)^{p-1} \,  t^{\frac{(p-2)(1+s)}{2}}\nonumber\\
&-{\beta}r_1\int_{\R^2}|u|^{r_1}|v|^{r_2}\mathrm{d}x \mathrm{d}y (1+l)^{r_1-1} \,  t^{\frac{(r_1+r_2-2)(1+s)}{2}}\nonumber\\
%&-\frac{\beta}{2} r_1\int_{\R^2}|u|^{r_1}|v|^{r_2}\mathrm{d}x (1+s)^{r_1-1} \,  t^{\frac{(r_1+r_2-2)(1+s)}{2}}\nonumber\\
&+\left(s(1+l)^2t^{2s-1}\int_{\mathbb{R}^{2}} \big(|\partial_{x}u|^2 +  |(-\Delta)_{y}^{s /2} u|^2 \big)\,\mathrm{d}x \mathrm{d}y+st^{2s-1}\int_{\mathbb{R}^{2}} \big(|\partial_{x}v|^2 +  |(-\Delta)_{y}^{s /2} v|^2  \big)\,\mathrm{d}x \mathrm{d}y\right)t'\nonumber\\
&-\frac{(p-2)(1+s)}{2p}\mu_1\|u\|_p^p (1+l)^p \,  t^{\frac{(p-2)(1+s)-2}{2}} \,t'-\frac{(q-2)(1+s)}{2q}\mu_2\|v\|_q^q t^{\frac{(q-2)(1+s)-2}{2}} \, t'\nonumber\\
&-\frac{(r_1+r_2-2)(1+s)}{2}\beta \Big(\int_{\R^2}|u|^{r_1}|v|^{r_2}\mathrm{d}x \mathrm{d}y \Big) (1+l)^{r_1}\, t^{\frac{(r_1+r_2-2)(1+s)-2}{2}} \, t'.\nonumber\\
%&-\frac{(r_1-1)(1+s)}{2}\beta \Big(\int_{\R^2}|u|^{r_1}|v|^{r_2}\mathrm{d}x \Big) (1+s)^{r_1}\, t^{\frac{(r_1+r_2-2)(s+2)-2}{2}} \, t'
\end{align*}
}
here $t=t(l), t'=t'(l)$.
Putting $l=0$ and noting that $t(0)=1$, we have that
\begin{align*}
%\lab{eq:20220831-we4}
\frac{d}{dl}J((1+l)u_{t(l)},  v_{t(l)})\Big|_{l=0}
=&\Big[\int_{\mathbb{R}^{2}} \big(|\partial_{x}u|^2 +  |(-\Delta)_{y}^{s /2} u|^2 \big)\,\mathrm{d}x \mathrm{d}y-\mu_1\|u\|_p^p-\beta r_1\int_{\R^2}|u|^{r_1}|v|^{r_2}\mathrm{d}x\mathrm{d}y \Big]\\
&+P(u,v)t'(0)-\lambda_1\|u\|_2^2.
\end{align*}
Then
$$C_{(a,b)}\leq J((1+l)u_{t(l)},  v_{t(l)})<J(u,v)=C_{(a,b)}, \hbox{ for $l>0$ small enough},$$
a contradiction.
Similarly, using that $\lambda_2>0$, we can prove that $v\in S_b$.
\end{proof}
{\bf Proof of Theorem \ref{th:main-t1}:}
Under the assumptions, by Lemma \ref{lemma7.2}, we have that
$
    C_{(a,b)} < \min \{m_{p,\mu_1,a}, m_{q,\mu_2,b}\}.
$
Consider the Palais-Smale sequence $\{(u_n,v_n)\} \subset \mathbb{D}$, which may be found in Section \ref{sec:Palais_Smale}. By Theorem \ref{theorem8.1}, we know that $(u_n,v_n) \to (u,v) \in S_a \times S_b$; specifically, $(u,v) \in \mathcal{P}_{a,b}$ and $J(u,v) = C_{(a,b)}$. Using Remark \ref{remark5.2}, we infer that $J$, restricted to $\mathcal{P}_{(a,b)}$, admits a minimum composed of Steiner symmetric functions, denoted as $(\overline{u}, \overline{v})$. Lemmas \ref{lemma4.8} and \ref{lemma4.9} demonstrate that this minimum is a critical point of $J$ restricted to $D_a \times D_b$. Consequently, the related Lagrange multipliers $\overline{\lambda}_1$ and $\overline{\lambda}_2$ are strictly positive, as proven in Theorem \ref{theorem8.1}. Thus, we have shown that
$$
    (\overline{\lambda}_1,\overline{\lambda}_2,\overline{u},\overline{v}) \in \mathbb{R}^2 \times {\mathbb{D}}
$$
is a ground state of Problem \eqref{eq:20220902-maine1} with the desired symmetry properties.
\hfill$\Box$

\section*{Acknowledgment}

A.D. is supported by DST INSPIRE Fellowship with sanction number DST/INSPIRE Fellowship/2022/IF220580. A.E. is supported by Nazarbayev University under the Faculty Development Competitive Research Grants Program for 2023-2025 (grant number 20122022FD4121). T.M. is supported by CSIR-HRDG grant with grant sanction No. 25/0324/23/EMR-II.

\section*{Competing Interest statement}
No potential conflict of interest was reported by the authors.

\bibliography{ref}
\bibliographystyle{siam}

\end{document}